\documentclass%[11pt,letterpaper,reqno]
 {amsart}

\usepackage{amsmath,cite,amsfonts,amssymb,amsthm,amscd,latexsym}

\usepackage{array}
\usepackage{mathrsfs}
\usepackage{color}
 \usepackage{lmodern}
\usepackage[T1]{fontenc}
\usepackage{a4wide}
\usepackage{enumitem}
\setlist[enumerate,1]{label={\roman*)}}

\newtheorem{thm}{Theorem}[section]
\newtheorem{theorem}[thm]{Theorem}
\newtheorem{conjecture}[thm]{Conjecture}

\newtheorem{lemma}[thm]{Lemma}

\newtheorem{corollary}[thm]{Corollary}
\newtheorem{proposition}[thm]{Proposition}

\newtheorem{definition}[thm]{Definition}
\newtheorem{remark}[thm]{Remark}

\begin{document}

\title{On simple transposed Poisson algebras}

\author[Amir Fernández Ouaridi]{Amir Fernández Ouaridi\\
University of Sevilla\\
\texttt{amir.fernandez.ouaridi@gmail.com}} % delete this line if you do not have one yet
\address{Amir Fernández Ouaridi. \newline \indent University of Sevilla, Department of Geometry and Topology, Sevilla (Spain).}
\email{{\tt amir.fernandez.ouaridi@gmail.com}}

\begin{abstract}
We develop a structure theory for transposed Poisson algebras over fields of characteristic different from two. In particular, we prove that every finite-dimensional transposed Poisson algebra over an algebraically closed field decomposes as the direct sum of a unital ideal and a nilpotent ideal. 
As a consequence, we obtain restrictions on simple transposed Poisson algebras and use them to classify the simple finite-dimensional transposed Poisson algebras over an algebraically closed field of characteristic $p>3$. Precisely, we show that every such algebra has as underlying Lie algebra a Zassenhaus algebra $\mathcal{W}(1;n)$ and is isomorphic to one of the algebras of the family $\mathcal{W}_n(q)$ arising from a mutation of a natural associative commutative structure on $\mathcal{W}(1;n)$. 
We then study the corresponding isomorphism problem for the family $\mathcal{W}_n(q)$ and determine the irreducible finite-dimensional representations of these simple transposed Poisson algebras $\mathcal{W}_n(q)$ in the unital case. We conclude with some applications to Jordan superalgebras, weak-Leibniz algebras and quasi-Poisson algebras.
\end{abstract}

\subjclass[2020]{17B10, 17B40, 17B50, 17B66.}
\keywords{transposed Poisson algebra; simple algebra; Lie algebra; irreducible representation}

%\thanks{ }
\maketitle

\section{Introduction}

\medskip

Transposed Poisson algebras were introduced as a dual of the class of Poisson algebras in which the roles of the two multiplications in the Leibniz rule are exchanged in \cite{tpa}. More precisely, a transposed Poisson algebra is a vector space $\mathcal{P}$ over a field $\mathbb{F}$ of characteristic $p\neq 2$ endowed with an associative commutative multiplication $\circ$ and an associated Lie bracket $[\cdot, \cdot]$ satisfying the transposed Leibniz rule, that is, for any $x,y,z\in \mathcal{P}$ we have:
\begin{equation*}%\label{tpaid}
    2 x \circ [y, z] = [x \circ y, z] + [y, x \circ z]. 
\end{equation*}

The recent interest in this class of algebras is justified by several reasons. The variety of transposed Poisson algebras coincides  with 
the variety of quasi-Poisson algebras, a class with which all the known simple superconformal Lie algebras can be constructed, as Billig has shown in \cite{billig}, when the P map is fixed as the identity. Equivalently, it coincides 
with the variety of commutative Gelfand-Dorfman algebras, which are used as a source of Hamiltonian operators \cite{sgd}. Transposed Poisson algebras are Jordan brackets, which are fundamental in the theory of Jordan algebras \cite{simple}; there is a correspondence with weak-Leibniz algebras by polarization and depolarization, a class that generalizes Leibniz algebras \cite{askar}.  They are also related to Novikov-Poisson algebras, $n$-Lie algebras, pre-Lie algebras, hom-Lie algebras, among other classes, see \cite{chenbai24}. 
Transposed Poisson algebras share some properties with Poisson algebras, including the closure under tensor products and the Koszul self-duality as an operad \cite{tpa}. Also, both classes are F-manifolds \cite{sgd}. At the same time, their intersection as varieties is small \cite{tpa}.

The notion of a Poisson algebra can be motivated by the classical construction in physics of a Lie bracket given an associative commutative algebra with commuting derivations. An analogous example appears in transposed Poisson algebras when we have an associative commutative algebra with a derivation $(\mathcal{A}, \circ, d)$, by defining the Lie bracket $[x,y]=x\circ d(y)- d(x)\circ y$ for any $x, y \in \mathcal{A}$. In this case, $(\mathcal{A}, \circ, [\cdot, \cdot])$ is a transposed Poisson algebra.
An example of a transposed Poisson algebra was constructed on the Witt algebra in \cite{FKL}. The procedure used to construct this example is based on the following fact: the transposed Leibniz rule can be reformulated by saying that each left multiplication operator of the associative commutative algebra is a $\frac{1}{2}$-derivation of the Lie bracket. This realization is fundamental in the description of the transposed Poisson structures on a given Lie algebra, for example, for Witt-type algebras see \cite{kk23} or for twisted Block-Lie superalgebras see \cite{HGS25}, and will also play an important role in the present paper. We refer to the mentioned papers for the precise procedure to construct the transposed Poisson structures on a given Lie algebra using its $\frac{1}{2}$-derivations. Recall that a $\frac{1}{2}$-derivation of a non-associative algebra $(\mathcal{A}, \circ)$ is a linear map $D$ on $\mathcal{A}$ such that $2 D(x\circ y) = D(x)\circ y + x\circ D(y)$. The space of $\frac{1}{2}$-derivations will be denoted by  $\textrm{Der}_{\frac{1}{2}}(\mathcal{A})$. These derivations were well-studied for Lie algebras  by Filippov (see \cite{fili98, fili99}) and Leger and Luks (see \cite{leger}). They appear in other contexts, for example, they were used for the construction of non-semigroup gradings of Lie algebras \cite{z10}. 

One of the main problems in the theory of non-associative algebras is the classification of the simple algebras of a given variety. An ideal of a transposed Poisson algebra is a subspace which is an ideal for both multiplications simultaneously. A simple transposed Poisson algebra is an algebra with only two ideals, zero and the whole algebra. 
In a recent work, it was proved that a simple transposed Poisson algebra has a simple associated Lie bracket \cite[Theorem~11]{simple}. This result implies that any simple finite-dimensional transposed Poisson algebra over an algebraically closed field of characteristic zero is trivial (one of the multiplications is zero), as simple Lie algebras do not admit non-trivial $\frac{1}{2}$-derivations, see \cite[Theorem 6]{fili98}.
In prime characteristic $p>2$, the situation is different. In fact, the author constructs a non-trivial simple finite-dimensional transposed Poisson algebra on the modular Witt algebra, that is, the Lie algebra of derivations of the algebra of truncated polynomials $\mathbb{F}[x]/(x^p)$. This example motivates the central problem addressed in this manuscript: classifying the transposed Poisson algebras over an algebraically closed field of characteristic $p>3$.

Any non-trivial simple transposed Poisson algebra is a transposed Poisson structure on a simple Lie algebra.
In characteristic $p>3$, the simple finite-dimensional Lie algebras over an algebraically closed field are known to be of classical type, of Cartan type, or of Melikyan type (see, for example, \cite{s04}). Moreover, several results in the literature suggest that the simple Lie algebras which may admit non-trivial transposed Poisson structures are extremely restricted.
Indeed, if in a perfect Lie algebra the centroid coincides with the space of $\frac12$-derivations, then every transposed Poisson structure on this Lie algebra is trivial \cite{simple}. On the other hand, Leger and Luks proved that the space of quasiderivations of a simple finite-dimensional Lie algebra of rank greater than one coincides with the direct sum of the derivations and the centroid \cite{leger}. In particular, for a simple finite-dimensional Lie algebra of rank greater than one, the centroid coincides with the space of $\frac12$-derivations. Thus one is led to consider the rank one case.

In characteristic $p>3$, Benkart and Osborn proved that the simple finite-dimensional Lie algebras of rank one are $\mathfrak{sl}_2$ and the Albert--Zassenhaus algebras \cite[Theorem~1.2]{benkart}. The latter belong to the families $\mathcal W(1;n)$ and $\mathcal H(2;\underline{m};\Phi(1))$. Furthermore, Filippov proved that in a prime Lie algebra admitting a non-degenerate symmetric invariant bilinear form, the centroid coincides with the space of $\frac12$-derivations \cite[Theorem~6]{fili98}. Since the Hamiltonian algebras $\mathcal H(2r;\underline{m})$ admit such a form \cite[Section 4.6, Theorem~6.5]{sf88}, this excludes the Hamiltonian case. The same argument also rules out $\mathfrak{sl}_2$ for $p>3$. Therefore, these results indicate that the only family which may carry non-trivial simple transposed Poisson structures is $\mathcal W(1;n)$.

Another useful viewpoint is provided by commutative $2$-cocycles \cite{AP10}. Indeed, we can consider the symmetric bilinear map
$\varphi(x,y)=x\circ y$. Then the transposed Leibniz identity implies that $\varphi$ induces a non-zero commutative $2$-cocycle on the Lie algebra $(\mathcal P,[\cdot,\cdot])$ after composing with any $\lambda\in \mathcal{P}^*$ such that $\lambda(\mathcal{P} \circ \mathcal{P})\neq 0$. Therefore, non-trivial simple transposed Poisson algebras are closely related to simple Lie algebras admitting non-zero commutative $2$-cocycles.

The paper is organized as follows. In Section~\ref{sec2}, we develop a decomposition theory for finite-dimensional transposed Poisson algebras over algebraically closed fields. In Section~\ref{sec3}, we study simple transposed Poisson algebras in prime characteristic. In particular, in characteristic $p>3$ we reduce the problem to transposed Poisson algebras whose underlying Lie algebra is a Zassenhaus algebra, and show that every simple finite-dimensional non-trivial transposed Poisson algebra is isomorphic to one of the algebras $\mathcal W_n(q)$. In Section~\ref{sec4}, we study the isomorphism problem for the family $\mathcal W_n(q)$ and obtain normalized forms for the parameter $q$. In Section~\ref{sec5}, we determine the irreducible finite-dimensional representations in the unital case. Finally, in Section~\ref{sec6}, we discuss applications to Jordan superalgebras, weak-Leibniz algebras, and quasi-Poisson algebras.

\section{A decomposition of transposed Poisson algebras}
\label{sec2}

Unless stated otherwise, all results in this paper hold for a transposed Poisson algebra of arbitrary dimension and over an arbitrary field of characteristic different from two. The associative commutative multiplication of any transposed Poisson algebra is not assumed to be unital, and it will be denoted by concatenation when the context is clear.
The purpose of this section is to obtain a useful decomposition of finite-dimensional transposed Poisson algebras over algebraically closed fields. 
Let $(\mathcal{P}, \circ, [\cdot, \cdot])$ be a transposed Poisson algebra. Denote by $P_x$ and $Q_x$ the maps in $End(\mathcal{P})$ given by $P_x(y) = xy$ and $Q_x(y) = [x, y]$ for $x, y \in \mathcal{P}$. A straightforward manipulation of the identities of transposed Poisson algebras gives rise to several relations between these two operators, see \cite{tpa, simple}. The next lemmas include some of them.

\begin{lemma}\label{lemid4}
Let $(\mathcal{P}, \circ, [\cdot, \cdot])$ be a transposed Poisson algebra. Then for $u, v, x, y \in \mathcal{P}$ we have
    \begin{equation}
        [ux, vy] = uv[x, y] + xy[u, v].   \end{equation} 
\end{lemma}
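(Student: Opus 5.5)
The plan is to derive the identity purely from the two defining axioms. The tool is the transposed Leibniz rule read as a rewriting rule,
\begin{equation*}
T(a;b,c):\qquad [ab,c]+[b,ac]-2a[b,c]=0 ,
\end{equation*}
together with associativity and commutativity of $\circ$. The approach has two stages. First I would establish a few auxiliary identities, which are already implicit in \cite{tpa, simple}. Then I would expand $[ux,vy]$ in two different ways and compare.

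The first auxiliary identity is the cyclic relation
\begin{equation*}
x[y,z]+y[z,x]+z[x,y]=0 .
\end{equation*}
It follows by writing $T(x;y,z)$, $T(y;z,x)$ and $T(z;x,y)$ and adding them. The bracket terms cancel pairwise by antisymmetry, since $[xy,z]+[z,xy]=0$ and similarly for the other two. What remains is $-2$ times the cyclic sum, and we can divide by $2$ because $\operatorname{char}\neq 2$.

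The second auxiliary identity is an expression for the mixed term $v[x,uy]$, that is, a product multiplied into a bracket that itself contains a product. I would multiply $T(u;x,y)$ by $v$ and use $T(v;\cdot,\cdot)$ to move the factor $v$ back inside. Together with the cyclic relation applied to triples such as $(v,x,uy)$, this should give a closed formula of the form
\begin{equation*}
v[x,uy] = \alpha\, uv[x,y] + \beta\, xy[u,v] + (\text{terms of the form } [\cdot,\cdot] \text{ with products inside}).
\end{equation*}

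With these in hand I would expand the left-hand side. The rule $T(v;ux,y)$ gives $[ux,vy]=2v[ux,y]-[uvx,y]$. Substituting $T(u;x,y)$ into the first term then gives
\begin{equation*}
[ux,vy]=4uv[x,y]-2v[x,uy]-[uvx,y].
\end{equation*}
I would also perform the same expansion with the roles of $(u,x)$ and $(v,y)$ interchanged, and a third time with $u\leftrightarrow x$ and $v\leftrightarrow y$. Note that the left-hand side is invariant under each of these swaps; the right-hand side $uv[x,y]+xy[u,v]$ is invariant under $(u,v)\leftrightarrow(x,y)$.

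Adding or subtracting suitable combinations of these expansions should cancel the terms $[uvx,y]$ and $[xvy,u]$. For this, each such term is rewritten by $T(uv;x,y)$ and $T(xy;u,v)$, which produce $uv[x,y]$ and $xy[u,v]$ plus mixed terms. The remaining mixed terms $v[x,uy]$, $u[y,vx]$, and so on, are then eliminated using the second auxiliary identity and the cyclic relation.

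I expect the main obstacle to be this bookkeeping step: finding the exact linear combination of $T$-instances in which every mixed term cancels, so that only the coefficient $1$ in front of both $uv[x,y]$ and $xy[u,v]$ survives. This may require dividing by $2$ or $4$, which is allowed since $\operatorname{char}\neq 2$. If a direct elimination is too messy, I would first check the identity in the model example $[a,b]=a\,d(b)-d(a)\,b$. There both sides expand to $uv(x\,dy-y\,dx)+xy(u\,dv-v\,du)$. This tells me which monomials must cancel, and I would use it to guide the choice of $T$-instances in the general case.
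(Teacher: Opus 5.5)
\textbf{Verdict: genuine gap.} The decisive step of your plan is left open, and the auxiliary identity you would rely on already carries the whole content of the lemma.

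\textbf{What is correct.} Your toolkit suffices; no Jacobi identity is needed. The computations you actually carry out are correct: the cyclic relation, and the expansion $[ux,vy]=4uv[x,y]-2v[x,uy]-[uvx,y]$ obtained from $T(v;ux,y)$ and $v\,T(u;x,y)$. There is one small slip. $[ux,vy]$ is not invariant under $(u,x)\leftrightarrow(v,y)$; it changes sign, and so does the right-hand side.

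\textbf{Where it breaks.} The argument stops exactly where the content of the lemma lies. Your ``second auxiliary identity'' is never derived, and it does all the work. By your own expansion, the lemma is equivalent to $2v[x,uy]=3uv[x,y]-xy[u,v]-[uvx,y]$, so postulating a formula of that shape amounts to postulating the statement. The versions you can derive directly, such as $2v[x,uy]=[vx,uy]+[x,uvy]$ from $T(v;x,uy)$, reintroduce a bracket of two products and send you in a circle. Likewise, $T(uv;x,y)$ creates no mixed terms: it only trades $[uvx,y]$ for $[x,uvy]$ plus $2uv[x,y]$. Checking the differential model shows which monomials must cancel. It proves nothing for an arbitrary transposed Poisson algebra, so it cannot replace the missing combination.

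\textbf{The missing idea.} The paper quotes \cite[Theorem 2.7 (13)]{tpa} for the symmetrized identity $[ux,vy]+[vx,uy]=2uv[x,y]$, and you should prove this first. It follows from $T$ alone:
\begin{multline*}
T(u;x,vy)+T(v;x,uy)-T(u;y,vx)-T(v;y,ux)\\
-2T(uv;x,y)+2u\,T(v;x,y)+2v\,T(u;x,y)=2\bigl([ux,vy]+[vx,uy]-2uv[x,y]\bigr).
\end{multline*}
Here all the terms $[x,uvy]$, $[uvx,y]$, $u[\cdot,\cdot]$ and $v[\cdot,\cdot]$ cancel.

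If you push your own route through, you land on precisely this identity, not on the lemma. Concretely: average your expansion with its $(u,x)\leftrightarrow(v,y)$ image, then use $T(uv;x,y)$, $T(u;y,vx)$ and $T(v;x,uy)$.

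\textbf{Finishing.} Interchanging $(u,v)$ with $(x,y)$ gives $[ux,vy]+[uy,vx]=2xy[u,v]$. Add the two identities and use $[vx,uy]+[uy,vx]=0$ to get $2[ux,vy]=2uv[x,y]+2xy[u,v]$, then divide by $2$. The cyclic relation is not needed.
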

\begin{proof}
By \cite[Theorem 2.7 (13)]{tpa}, we have 
$$2uv[x, y] = [ux, vy] + [vx, uy], \quad \quad 2xy[u, v] = [ux, vy] + [uy, vx]$$
Summing both equations, we obtain the identity in the lemma.
\end{proof}

\begin{lemma}\label{rels1}
    Let $(\mathcal{P}, \circ, [\cdot, \cdot])$ be a transposed Poisson algebra. Then the following relations hold for all $x, y, z \in \mathcal{P}$ and $k\in \mathbb{N}$.
    \begin{enumerate}
        \item $P_x^{2k} Q_y = Q_{x^{k}y}P_x^k$,
        \item  $P_x^{2k-1} Q_y = \frac{1}{2}Q_{x^{k-1}y}P_x^k + \frac{1}{2}Q_{x^{k}y}P_x^{k-1}$,
        \item $2P_{xy}Q_z = Q_{xz}P_y + Q_{yz}P_x$.
    \end{enumerate} 
\end{lemma}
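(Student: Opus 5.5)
Applying both sides of each relation to an arbitrary $w\in\mathcal{P}$, every item becomes an identity relating products of the form $x^m[\,\cdot\,,\,\cdot\,]$ and brackets of products. I would prove item iii) first, then the case $k=1$ of items i) and ii), and finally the general $k$ by induction using these base cases.

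For iii), evaluate at $w$. The claim is $2(xy)[z,w]=[xz,yw]+[yz,xw]$. This is the identity $2uv[x,y]=[ux,vy]+[vx,uy]$ from \cite[Theorem 2.7 (13)]{tpa} quoted in the proof of Lemma~\ref{lemid4}, with $(u,v,x,y)\mapsto(x,y,z,w)$.

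For the base cases, ii) with $k=1$ reads $x[y,w]=\frac12[y,xw]+\frac12[xy,w]$, which is exactly the transposed Leibniz rule. Item i) with $k=1$ reads $x^2[y,w]=[xy,xw]$. This follows from iii) with the two product variables equal to $x$: $2x^2[y,w]=[xy,xw]+[xy,xw]$. Alternatively, it follows from Lemma~\ref{lemid4} with $u=v=x$, since $[x,x]=0$.

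For the induction, assume i) holds for $k$. Then
\[
P_x^{2k+2}Q_y=P_x^2P_x^{2k}Q_y=P_x^2Q_{x^ky}P_x^k=Q_{x^{k+1}y}P_xP_x^k,
\]
where the last step is the case $k=1$ of i) applied with $y$ replaced by $x^ky$. This proves i) for $k+1$.

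For ii) with general $k\ge2$, write $P_x^{2k-1}Q_y=P_x\,P_x^{2k-2}Q_y=P_xQ_{x^{k-1}y}P_x^{k-1}$ by i) for $k-1$. Applying the $k=1$ case of ii) to $P_xQ_{x^{k-1}y}$ gives
\[
\tfrac12 Q_{x^{k-1}y}P_x+\tfrac12 Q_{x^ky},
\]
and multiplying on the right by $P_x^{k-1}$ yields ii). If $\mathbb{N}$ includes $0$, the case $k=0$ of i) is trivial, since $x^0y$ is read as $y$; the case $k=0$ of ii) should be excluded because $P_x^{-1}$ has no meaning.

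The only real obstacle is keeping track of exponents and the order of composition in the induction. The identity content comes entirely from the transposed Leibniz rule and \cite[Theorem 2.7 (13)]{tpa}.
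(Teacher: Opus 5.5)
Your proof is correct and takes the same route as the paper. The paper's proof is only the remark that these relations follow from the identity $2uv[x,y]=[ux,vy]+[vx,uy]$ of \cite[Theorem 2.7 (13)]{tpa}; you supply the details it omits (item iii) directly from that identity, the $k=1$ cases from it and from the transposed Leibniz rule, and general $k$ by induction), and your conventions $P_x^0=\mathrm{Id}$, $x^0y=y$ match how the lemma is used in Proposition~\ref{manyideals}.
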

\begin{proof}
The relations are a consequence of \cite[Theorem 2.7 (13)]{tpa}. 
\end{proof}

Using these lemmas, we establish the following key relations.

\begin{lemma}\label{rels2}
Let $(\mathcal{P}, \circ, [\cdot, \cdot])$ be a transposed Poisson algebra. Then for any $x, y \in \mathcal{P}$ and $k, s\in \mathbb{N}$ with $s\leq k$, we have the following relation.  
\begin{enumerate} 
    \item If $s$ is even, then
\begin{equation}\label{eq11}
    \binom{2k}{s} Q_{x^{k-\frac{s}{2}}y} P_x^{k-\frac{s}{2}} - \sum_{j=0}^s \binom{k}{j}\binom{k}{s-j} Q_{x^{k-j}y} P_x^{k-s+j} = 0.
\end{equation}
    
    \item If $s$ is odd, then
    \begin{equation}\label{eq12}
\binom{2k}{s} Q_{x^{k - \frac{s+1}{2}}y}P_x^{k - \frac{s-1}{2}}  +   \binom{2k}{s} Q_{x^{k - \frac{s-1}{2}}y}P_x^{k - \frac{s+1}{2}} - 2\sum_{j=0}^s \binom{k}{j} \binom{k}{s-j} Q_{x^{k-j}y} P_x^{k-s+j}=0.
\end{equation}
\end{enumerate}
\end{lemma}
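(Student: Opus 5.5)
The plan is to obtain both relations as the coefficient of a single power of a formal parameter in a polarized form of Lemma~\ref{rels1}(i). The binomial coefficients $\binom{2k}{s}$ and $\binom{k}{j}\binom{k}{s-j}$ are the coefficients of $t^s$ in $(1+t)^{2k}$ and in $(1+t)^k(1+t)^k$. This suggests replacing $x$ in Lemma~\ref{rels1}(i) by ``$x+t\cdot 1$''. Then the left side $(P_x+tI)^{2k}Q_y$ and the right side $Q_{(x+t)^k y}(P_x+tI)^k$ produce exactly these coefficients.

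The first step is to make the substitution legitimate, since $\mathcal{P}$ need not be unital. Let $\mathcal{A}=\mathbb{F}e\oplus\mathcal{P}$ be the unital hull of $(\mathcal{P},\circ)$, and work in $\mathcal{A}[t]$. The operator $Q$ is only ever evaluated on elements of $\mathcal{P}[t]$: indeed $(x+te)^k y\in\mathcal{P}[t]$ because $y\in\mathcal{P}$. So no bracket involving $e$ is needed. I then check that the identity behind Lemma~\ref{rels1}, namely \cite[Theorem 2.7 (13)]{tpa},
$$2uv[x,y]=[ux,vy]+[vx,uy],$$
stays valid for $u,v\in\mathcal{A}$ and $x,y\in\mathcal{P}$. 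By bilinearity it suffices to treat $u=e$ or $v=e$. For $u=v=e$ the identity is trivial. For exactly one of them equal to $e$, it reduces to the transposed Leibniz rule $2v[x,y]=[x,vy]+[vx,y]$. Lemma~\ref{rels1}(i) is derived from this identity purely by operator manipulations, with $P_a$ for $a\in\mathcal{A}$ and $Q_b$ only for $b\in\mathcal{P}$. Repeating that derivation, the relation $P_a^{2k}Q_y=Q_{a^k y}P_a^k$ therefore holds for $a=x+te$, after extending scalars to $\mathbb{F}[t]$. I expect this verification to be the main obstacle: one has to confirm that the proof of (i) in \cite{tpa} never uses a bracket with the adjoined unit. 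If it does, I would instead redo the induction for (i) directly from the displayed identity with $u,v\in\mathcal{A}$.

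Next I expand, using $P_{x+te}=P_x+tI$ and the fact that $P_x$ commutes with scalars:
$$\sum_{s}\binom{2k}{s}t^s P_x^{2k-s}Q_y=\sum_{i,l}\binom{k}{i}\binom{k}{l}t^{i+l}\,Q_{x^{k-i}y}P_x^{k-l}.$$
Comparing coefficients of $t^s$ gives
$$\binom{2k}{s}P_x^{2k-s}Q_y=\sum_{j=0}^{s}\binom{k}{j}\binom{k}{s-j}Q_{x^{k-j}y}P_x^{k-s+j}.$$

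Finally I rewrite the left side with Lemma~\ref{rels1}, using $s\le k$ so that all exponents are positive. If $s$ is even, then $2k-s=2(k-\frac{s}{2})$, and (i) gives $P_x^{2k-s}Q_y=Q_{x^{k-s/2}y}P_x^{k-s/2}$; this yields \eqref{eq11}. If $s$ is odd, then $2k-s=2(k-\frac{s-1}{2})-1$, and (ii) with $k$ replaced by $k-\frac{s-1}{2}$ gives
$$P_x^{2k-s}Q_y=\tfrac12 Q_{x^{k-\frac{s+1}{2}}y}P_x^{k-\frac{s-1}{2}}+\tfrac12 Q_{x^{k-\frac{s-1}{2}}y}P_x^{k-\frac{s+1}{2}}.$$
Multiplying the coefficient identity by $2$ then gives \eqref{eq12}.
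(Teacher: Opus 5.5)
Your proof is correct, but it reaches the key intermediate identity
$$\binom{2k}{s}P_x^{2k-s}Q_y=\sum_{j=0}^{s}\binom{k}{j}\binom{k}{s-j}\,Q_{x^{k-j}y}P_x^{k-s+j}$$
by a different route from the paper.

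\textbf{The paper's route.} The paper never leaves $\mathcal P$. It expands $\binom{2k}{s}$ by the Vandermonde identity and rewrites the middle term(s) as multiples of $P_x^{2k-s}Q_y$. It then cancels the remaining terms by pairing the indices $j$ and $s-j$, which carry equal coefficients. Each pair is handled by relation (iii) of Lemma~\ref{rels1}, with $x^{k-j}$ and $x^{k-s+j}$ in the roles of $x$ and $y$, so each pair equals $2P_x^{2k-s}Q_y$. The cost is some bookkeeping with the excluded indices $j=\frac{s}{2}$ or $j=\frac{s\pm1}{2}$.

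\textbf{Your route.} You adjoin a unit, extend scalars to $\mathbb F[t]$, and compare coefficients of $t^s$ in relation (i) at $x+te$. Vandermonde is then built into $(1+t)^{2k}=(1+t)^k(1+t)^k$, and both parities are treated uniformly until the final rewriting with (i) and (ii).

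\textbf{Comparison.} Underneath, it is the same computation. Expanding $2uv[y,z]=[uy,vz]+[vy,uz]$ bilinearly at $u=v=(x+te)^k$ produces exactly the pairs the paper treats with (iii). Your version makes the origin of the binomial coefficients transparent. The paper's version avoids the unital hull and the formal parameter. It also shows the slightly finer fact that each symmetric pair separately equals $2P_x^{2k-s}Q_y$.

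\textbf{Your anticipated obstacle.} The step you single out as the main obstacle is not one. Once $2uv[y,z]=[uy,vz]+[vy,uz]$ holds for $u,v\in\mathcal A[t]$, relation (i) at $a=x+te$ is literally this identity with $u=v=a^k$. This uses $P_a^{2k}=P_{a^{2k}}$ and the invertibility of $2$. So nothing about how (i) is proved in the literature needs to be checked.
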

\begin{proof}
    Using the Vandermonde identity and the relation ($iii$) in Lemma~\ref{rels1}, we obtain
\begin{equation*}
    \begin{split}
    \textrm{LHS (\ref{eq11})}=&\sum_{j=0, j\neq \frac{s}{2}}^s \binom{k}{j}\binom{k}{s-j} P_x^{2k-s} Q_{y}   - \sum_{j=0, j\neq \frac{s}{2}}^s \binom{k}{j}\binom{k}{s-j} Q_{x^{k-j}y} P_x^{k-s+j}\\
    =&\sum_{j=0, j\neq \frac{s}{2}}^s \binom{k}{j}\binom{k}{s-j} P_x^{2k-s} Q_{y}   - 2 \sum_{j=0}^{\frac{s}{2}-1} \binom{k}{j}\binom{k}{s-j}  P_x^{2k-s} Q_{y} = 0.
    \end{split}
\end{equation*}   
    That proves the first identity.  Likewise, for the second relation we have
    \begin{equation*}
    \begin{split}
    \textrm{LHS (\ref{eq12})}=& \sum_{j=0, j\neq \frac{s\pm 
1}{2}}^s \binom{k}{j}\binom{k}{s-j} 
 Q_{x^{k - \frac{s+1}{2}}y}P_x^{k - \frac{s-1}{2}}  + 
 \sum_{j=0, j\neq \frac{s\pm 
1}{2}}^s \binom{k}{j}\binom{k}{s-j}  Q_{x^{k - \frac{s-1}{2}}y}P_x^{k - \frac{s+1}{2}} \\
    &- 2\sum_{j=0, j\neq \frac{s\pm 
1}{2}}^s \binom{k}{j} \binom{k}{s-j} Q_{x^{k-j}y} P_x^{k-s+j}\\
= \, \, \, &  2\sum_{j=0, j\neq \frac{s\pm 
1}{2}}^s \binom{k}{j}\binom{k}{s-j} 
 P_x^{2k - s}Q_{y} - 4\sum_{j=0}^{\frac{s-1}{2}-1} \binom{k}{j} \binom{k}{s-j} P_x^{2k-s}Q_{y}  =0.    \end{split}\end{equation*}
\end{proof}

\begin{remark}
Let $(\mathcal{P}, \circ, [\cdot, \cdot])$ be a transposed Poisson algebra  over an arbitrary field $\mathbb{F}$. 
    It can be checked, using the identities above, that for any $x\in \mathcal{P}$, we have that $ker(P_x)$, $img(P_x)$ and $ker(P_x) + img(P_x)$ are subalgebras of $\mathcal{P}$. 
\end{remark}

Moreover,
it is a consequence of the transposed Leibniz rule that the eigenspaces of the operators $P_a$, i.e., the spaces $\mathcal{P}^1_{a,\lambda}= \left\{x\in\mathcal{P}: P_{a} x=\lambda x \right\}$ where $\lambda \in \mathbb{F}$, are  subalgebras of $\mathcal{P}$ too. A more general fact holds for the generalized eigenspaces as the next result shows.

\begin{proposition}\label{manyideals}
    Let $(\mathcal{P}, \circ, [\cdot, \cdot])$ be a transposed Poisson algebra over an arbitrary field $\mathbb{F}$. Then the generalized eigenspace $$\mathcal{P}_{a,\lambda}= \left\{x\in\mathcal{P}: (P_{a}-\lambda)^{k}x=0 \textrm{ for some $k>0$}\right\}$$
    of the operator $P_a$ is an ideal of the transposed Poisson algebra $\mathcal{P}$ for every $a\in \mathcal{P}$ and $\lambda\in \mathbb{F}$.
\end{proposition}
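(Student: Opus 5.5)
The plan is to set $T=P_a-\lambda\,\mathrm{id}$, so that $\mathcal{P}_{a,\lambda}=\bigcup_{k}\ker T^{k}$, and to check separately that this subspace is stable under both multiplications. It is clearly a subspace, since the kernels $\ker T^k$ form an increasing chain of subspaces.

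\emph{Associative ideal.} Since $\circ$ is associative and commutative, $P_aP_y=P_{ay}=P_{ya}=P_yP_a$ for every $y\in\mathcal{P}$, so $T$ commutes with $P_y$. Hence if $T^k x=0$ then $T^k(yx)=P_yT^kx=0$, and $\mathcal{P}\circ\mathcal{P}_{a,\lambda}\subseteq\mathcal{P}_{a,\lambda}$. This step needs no further computation.

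\emph{Lie ideal.} The idea is to transfer relation (i) of Lemma~\ref{rels1} from $P_a$ to the shifted operator $T$. For $y\in\mathcal{P}$ write $Q_{Ty}:=Q_{ay}-\lambda Q_y$. This is linear in the subscript and makes sense even without a unit. The transposed Leibniz rule gives $2P_aQ_y=Q_{ay}+Q_yP_a$, and Lemma~\ref{rels1}(i) with $k=1$ gives $P_a^2Q_y=Q_{ay}P_a$. Expanding $T^2=P_a^2-2\lambda P_a+\lambda^2$ then yields
\begin{equation*}
T^2Q_y=Q_{ay}P_a-\lambda\bigl(Q_{ay}+Q_yP_a\bigr)+\lambda^2Q_y=(Q_{ay}-\lambda Q_y)(P_a-\lambda)=Q_{Ty}T .
\end{equation*}
Applying this identity with $Ty$ in place of $y$ and inducting on $k$ gives $T^{2k}Q_y=Q_{T^ky}T^k$ for all $k\geq1$; this is the same induction that yields Lemma~\ref{rels1}(i), with $P_x$ replaced by $T$. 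Consequently, if $T^kx=0$ then $T^{2k}[y,x]=Q_{T^ky}T^kx=0$, so $[\mathcal{P},\mathcal{P}_{a,\lambda}]\subseteq\mathcal{P}_{a,\lambda}$. Antisymmetry of the bracket handles the other side.

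The only real point is the one above: the obvious first attempt is to use that $T$ is a $\tfrac12$-derivation, i.e. $2T[y,z]=[Ty,z]+[y,Tz]$, and iterate it. That produces terms $[T^iy,T^{n-i}x]$ in which $T^iy$ need not vanish for arbitrary $y$, so it does not close the argument. Instead I will rely on the ``squared'' relation $T^2Q_y=Q_{Ty}T$. In it all powers of $T$ act on the second argument, which is exactly what nilpotency of $T$ on $x$ requires. The only verification needed is the short expansion displayed above.
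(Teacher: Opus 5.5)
Your proof is correct. It reaches the same operator identity the paper relies on, but by a genuinely shorter route.

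\textbf{The paper's route.} It expands $(P_a-\lambda)^{2k}Q_y$ binomially and rewrites every $P_a^{j}Q_y$ using both parts (i) and (ii) of Lemma~\ref{rels1}. It then adds the vanishing terms $(-1)^{j-1}\binom{k}{j}\lambda^jQ_{a^{k-j}y}(P_a-\lambda)^kx$. In your notation these sum to exactly $-Q_{T^ky}T^kx$. Finally it cancels the coefficient of each $\lambda^s$ using the Vandermonde-type identities of Lemma~\ref{rels2}. So in effect the paper verifies $T^{2k}Q_y=Q_{T^ky}T^k$ coefficient by coefficient in $\lambda$.

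\textbf{Your route.} You observe that the two basic relations $2P_aQ_y=Q_{ay}+Q_yP_a$ and $P_a^2Q_y=Q_{ay}P_a$ are stable under the shift $P_a\mapsto P_a-\lambda$. That is, they already give $T^2Q_y=Q_{Ty}T$. The identity for $T^{2k}$ then follows by a two-line induction. I checked the expansion, and it is right.

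\textbf{What each buys.} Your argument makes Lemma~\ref{rels1}(ii) and Lemma~\ref{rels2} unnecessary for this proposition, and avoids all binomial bookkeeping. It also gives the exponent cleanly: $T^kx=0$ implies $T^{2k}[y,x]=0$. The paper's text states its conclusion for $k\geq 2m$, which is true but weaker than what its computation actually shows. The paper's route does produce the general identities of Lemma~\ref{rels2} as by-products, but nothing in the proposition needs them beyond the single identity you derive directly.
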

\begin{proof}
    Fix $a\in \mathcal{P}$ and $\lambda\in \mathbb{F}$. Let $x\in\mathcal{P}_{a,\lambda}$ and $y\in \mathcal{P}$. Denote by $m$ the minimal number such that $(P_{a}-\lambda)^{m}x = 0$. By the commutativity and the associativity of $(\mathcal{P}, \circ)$, we have $[P_a, P_y] = 0$. Thus, we obtain $(P_{a}-\lambda)^{k}P_{y}x=  0$, for $k\geq m$. Now, let us show that $(P_{a}-\lambda)^{k}Q_{y}x= 0$, for a suitable $k\geq 2m$. 
First, we use the relations (1) and (2) in Lemma~\ref{rels1} to expand the expression.
\begin{equation*}%\label{eq1}
    \begin{split}
        (P_a-\lambda)^{2k}Q_yx =& \sum_{i=0}^{2k} (-1)^{i} \binom{2k}{i}\lambda^{i} P_a^{2k-i}Q_y x \\      =&\sum_{i=0}^{k} \binom{2k}{2i}\lambda^{2i} P_a^{2(k-i)}Q_y x - 
\sum_{i=0}^{k-1} \binom{2k}{2i+1}\lambda^{2i+1} P_a^{2(k - i)-1}Q_y x \\
        =&\sum_{i=0}^{k} \binom{2k}{2i}\lambda^{2i} Q_{a^{k-i}y} P_a^{k-i} x - \frac{1}{2}
\sum_{i=0}^{k-1} \binom{2k}{2i+1}\lambda^{2i+1} Q_{a^{k - i-1}y}P_a^{k - i} x \\
& \quad\quad\quad\quad\quad\quad\quad\quad\quad\quad\quad\,\, - 
\frac{1}{2}\sum_{i=0}^{k-1}  \binom{2k}{2i+1}\lambda^{2i+1} Q_{a^{k - i}y}P_a^{k - i-1} x.
    \end{split}
\end{equation*}

We sum the following terms for $0\leq j\leq k$, which are equal to zero, to the expression above. 
$$0=(-1)^{j-1}\binom{k}{j}\lambda^jQ_{a^{k-j}y}(P_{a}-\lambda)^{k}x = \binom{k}{j}\sum_{i=0}^{k} (-1)^{i+j-1} \binom{k}{i}\lambda^{i+j} Q_{a^{k-j}y} P_a^{k-i} x.$$

The resulting expression is
\begin{equation*}%\label{eq2}
        \begin{split}
        &\sum_{i=0}^{k} \binom{2k}{2i}\lambda^{2i} Q_{a^{k-i}y} P_a^{k-i} x - \frac{1}{2}
\sum_{i=0}^{k-1} \binom{2k}{2i+1}\lambda^{2i+1} Q_{a^{k - i-1}y}P_a^{k - i} x \\
&- 
\frac{1}{2}\sum_{i=0}^{k-1}  \binom{2k}{2i+1}\lambda^{2i+1} Q_{a^{k - i}y}P_a^{k - i-1} x + \sum_{j=0}^k \binom{k}{j}\sum_{i=0}^{k} (-1)^{i+j-1} \binom{k}{i}\lambda^{i+j} Q_{a^{k-j}y} P_a^{k-i} x.
    \end{split}
\end{equation*}

Now, we obtain the desired result by using the relations in Lemma~\ref{rels2} for all the terms with a fixed $\lambda^s$ for $0\leq s\leq 2k$. Indeed, we have one of the following situations.

\begin{enumerate}
    \item If $s$ is even, then the terms with $\lambda^s$ are precisely
    $$\binom{2k}{s}\lambda^{s} Q_{a^{k-\frac{s}{2}}y} P_a^{k-\frac{s}{2}} x - \sum_{j=0}^s \binom{k}{j}\binom{k}{s-j}\lambda^{s} Q_{a^{k-j}y} P_a^{k-s+j} x = 0.$$
    
    \item If $s$ is odd, then we have the terms

\begin{equation*}%\label{eq2}
        \begin{split}
         - \frac{1}{2}
\binom{2k}{s}\lambda^{s} Q_{a^{k - \frac{s-1}{2}-1}y}P_a^{k - \frac{s-1}{2}} x & - 
\frac{1}{2} \binom{2k}{s}\lambda^{s} Q_{a^{k - \frac{s-1}{2}}y}P_a^{k - \frac{s-1}{2}-1} x  \\
& +\sum_{j=0}^s \binom{k}{j} \binom{k}{s-j}\lambda^{s} Q_{a^{k-j}y} P_a^{k-s+j} x = 0
    \end{split}
\end{equation*}

\end{enumerate}

Since for every $\lambda^s$ we have shown that the corresponding terms add up to zero, we obtain that $(P_a-\lambda)^{2k}Q_yx = 0$ for $k\geq 2m$. Therefore, we have proved that $P_yx, Q_yx \in \mathcal{P}_{a,\lambda}$ for all $x\in\mathcal{P}_{a,\lambda}$ and $y\in \mathcal{P}$. Hence, we conclude that the vector space $\mathcal{P}_{a,\lambda}$ is an ideal of $\mathcal{P}$ for $a\in \mathcal{P}$ and $\lambda\in \mathbb{F}$.
\end{proof}

\begin{corollary}\label{simpleopers}
Let $(\mathcal{P}, \circ, [\cdot, \cdot])$ be a simple transposed Poisson algebra over an arbitrary field $\mathbb{F}$. Then $\mathcal{P}_{x,\lambda}=0$ or $\mathcal{P}_{x,\lambda}=\mathcal{P}$  
    for every $x\in \mathcal{P}$ and $\lambda\in \mathbb{F}$.
\end{corollary}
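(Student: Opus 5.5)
This is an immediate consequence of Proposition~\ref{manyideals}, and I would argue directly from it. Fix $x\in\mathcal{P}$ and $\lambda\in\mathbb{F}$. By Proposition~\ref{manyideals}, the generalized eigenspace $\mathcal{P}_{x,\lambda}$ of $P_x$ is an ideal of the transposed Poisson algebra $\mathcal{P}$: it is simultaneously an ideal for $\circ$ and for $[\cdot,\cdot]$. Proposition~\ref{manyideals} holds over an arbitrary field and places no restriction on $x$ or $\lambda$, so nothing extra is needed here.

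Since $\mathcal{P}$ is simple, its only ideals are $0$ and $\mathcal{P}$ itself. Hence $\mathcal{P}_{x,\lambda}$ must equal one of these two, which is exactly the claim. The one point to check is that $\mathcal{P}_{x,\lambda}$ is a genuine subspace: it is a union of the increasing chain of subspaces $\ker(P_x-\lambda)^k$, so it is indeed a subspace. This is implicit in Proposition~\ref{manyideals} anyway.

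There is no real obstacle. The whole content lies in Proposition~\ref{manyideals}, and the corollary simply reads off its consequence under the simplicity hypothesis. I would add one remark: when $\mathcal{P}$ is finite-dimensional over an algebraically closed field, $\mathcal{P}$ is the direct sum of the $\mathcal{P}_{x,\lambda}$ over $\lambda$. The corollary then forces each $P_x$ to have a single eigenvalue, so $P_x-\lambda_x$ is nilpotent. This is presumably the form in which the corollary will be used later.
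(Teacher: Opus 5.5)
Your proof is correct and is exactly the paper's (implicit) argument: Proposition~\ref{manyideals} makes $\mathcal{P}_{x,\lambda}$ an ideal for both products, so simplicity leaves only $0$ or $\mathcal{P}$. Your closing remark about the finite-dimensional, algebraically closed case also matches how the paper uses the result in Remark~\ref{rem1}.
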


We prove that a finite-dimensional transposed Poisson algebra can be directly decomposed in two main ideals, one of them being unital, as the following theorem shows. We use the symbol $\oplus$ for the direct sum of transposed Poisson algebras. First, we need this auxiliary lemma.

\begin{lemma}\label{lemuni}
Let $(\mathcal{A},\circ)$ be an associative commutative algebra of dimension $n \in \mathbb{N}$. If for every $x\in \mathcal{A}$,  the characteristic polynomial of $P_x$ is $\chi_x(t) = (t-\lambda_x)^n$. Then $(\mathcal{A},\circ)$ is nilpotent or unital.
\end{lemma}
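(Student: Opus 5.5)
The plan is to split into two cases according to whether some multiplication operator $P_a$ is invertible. The spectral hypothesis makes the dichotomy clean: since $\chi_x(t)=(t-\lambda_x)^n$, the operator $P_x$ is either nilpotent ($\lambda_x=0$) or invertible ($\lambda_x\neq 0$). No trace argument is needed, which matters because the characteristic of $\mathbb{F}$ might divide $n$.

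\emph{Case 1: $\lambda_a\neq 0$ for some $a\in\mathcal{A}$.} Then $P_a$ is bijective, so I set $e=P_a^{-1}(a)$, i.e.\ $ae=a$. Given any $x\in\mathcal{A}$, surjectivity of $P_a$ lets me write $x=ay$. By associativity and commutativity,
$$ex=e(ay)=(ea)y=ay=x.$$
Hence $e$ is a unit element and $\mathcal{A}$ is unital.

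\emph{Case 2: $\lambda_x=0$ for every $x$.} Now every $P_x$ is nilpotent, and the goal is to show $\mathcal{A}$ is nilpotent. By commutativity and associativity, $P_xP_y=P_{xy}=P_yP_x$, so $\{P_x : x\in\mathcal{A}\}$ is a commuting family of nilpotent operators on the $n$-dimensional space $\mathcal{A}$. Such a family is simultaneously strictly upper triangularizable. To see this without assuming $\mathbb{F}$ algebraically closed, take the common kernel $K$ of the family. It is nonzero because the family spans a commutative associative algebra of nilpotent operators, so Engel-type reasoning gives a common null vector. Then pass to $\mathcal{A}/K$, which is invariant since the operators commute, and induct on dimension. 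Consequently any product of $n$ operators $P_{x_1}\cdots P_{x_n}$ vanishes. Therefore every product of $n+1$ elements of $\mathcal{A}$ is zero, i.e.\ $\mathcal{A}^{n+1}=0$, and $\mathcal{A}$ is nilpotent.

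The only step needing care is the passage from ``each $P_x$ is nilpotent'' to ``$\mathcal{A}$ is nilpotent'', that is, the common-flag argument in Case 2. I would handle it as above, using that the $P_x$ pairwise commute. Alternatively, one can cite the classical fact that a finite-dimensional nil associative algebra is nilpotent (Wedderburn), noting that $P_x^k=0$ forces $x^{k+1}=0$.
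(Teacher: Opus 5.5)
Your proof is correct, but it follows a different route from the paper.

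\textbf{The paper's route.} It argues by contradiction on nilpotency. If $(\mathcal{A},\circ)$ is not nilpotent, it invokes the classical fact that a non-nilpotent finite-dimensional associative algebra contains a nonzero idempotent $e$. Then $P_e e=e$ forces $\lambda_e=1$, so $P_e$ is invertible, and $e\circ(e\circ x-x)=0$ yields $e\circ x=x$.

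\textbf{Your route.} You split on whether some $P_a$ is invertible. If one is, you construct the unit directly as $e=P_a^{-1}(a)$, using surjectivity of $P_a$. If none is, every $P_x$ is nilpotent, and you obtain $\mathcal{A}^{n+1}=0$ by simultaneously strictly triangularizing the commuting family $\{P_x\}$.

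\textbf{Comparison.} The paper's argument is shorter, but it outsources the real work to the existence of idempotents. That fact itself rests on the theorem that finite-dimensional nil algebras are nilpotent, plus a Fitting-type extraction of an idempotent from a non-nilpotent element. The paper uses the spectral hypothesis only for the single element $e$. You use it for every $x$, in order to get the clean dichotomy ``invertible or nilpotent''. In exchange, your argument is self-contained: it effectively reproves the needed structure theory in the easy commutative case, and needs nothing about the base field beyond $\lambda_x\in\mathbb{F}$.

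\textbf{One tightening.} Make the common-kernel step explicit. Intersect kernels successively over a basis $P_{x_1},\dots,P_{x_n}$ of the span of $\{P_x\}$. At each stage the current common kernel is nonzero, and it is invariant under the remaining operators because they commute with the ones already used. Each remaining operator is nilpotent, so it has a nonzero kernel on that subspace. Linearity then gives a nonzero common kernel for the whole family. This is what your ``Engel-type reasoning'' amounts to.
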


\begin{proof}
Suppose $(\mathcal{A},\circ)$ is not nilpotent, then it has an idempotent element $e\in \mathcal{A}$. Thus, for every $x\in \mathcal{A}$ we have 
$
e\circ (e \circ x -x)= (e \circ e) \circ x - e\circ x = 0$. 
Since $\chi_e(t) = (t-\lambda_e)^n$ and $\lambda_e = 1$, we have that $P_e$ is invertible. Hence, we deduce $e\circ x = x$ and $e$ is the unit.
\end{proof}

\begin{theorem}\label{decomp}
Let $(\mathcal{P},\circ,[\cdot,\cdot])$ be a finite-dimensional transposed Poisson algebra over an
algebraically closed field $\mathbb F$. Then there exist ideals $\mathcal D,\mathcal N$ of $\mathcal P$
such that
$$
\mathcal P=\mathcal D\oplus \mathcal N,\qquad
(\mathcal D,\circ)\ \text{is unital},\qquad
(\mathcal N,\circ)\ \text{is nilpotent}.
$$
Moreover, $\mathcal D$ can be decomposed as
$$
\mathcal D=\bigoplus_{i\in\Delta}\mathcal D_i
$$
where each $\mathcal D_i$ is an ideal and, for every $x\in \mathcal D_i$,
the restriction $P_x|_{\mathcal D_i}$ has a unique eigenvalue.
\end{theorem}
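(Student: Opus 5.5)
The plan is to repeatedly split $\mathcal P$ along generalized eigenspaces of multiplication operators, using Proposition~\ref{manyideals}, until every element acts with a single eigenvalue on each piece, and then sort the pieces with Lemma~\ref{lemuni}.

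First, since the $P_x$ pairwise commute and $\mathbb F$ is algebraically closed, for any $a\in\mathcal P$ we have $\mathcal P=\bigoplus_{\lambda}\mathcal P_{a,\lambda}$. By Proposition~\ref{manyideals} each $\mathcal P_{a,\lambda}$ is an ideal of $\mathcal P$, so this is a direct sum of transposed Poisson algebras. Two consequences follow.

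\begin{itemize}
\item Each summand is stable under every $P_b$, so it is again a finite-dimensional transposed Poisson algebra.
\item An ideal of a direct summand is an ideal of $\mathcal P$: the summands annihilate each other in both products, since $I\circ J,[I,J]\subseteq I\cap J=0$ for ideals with $I\cap J=0$.
\end{itemize}

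The refinement step is then as follows. Given a current decomposition $\mathcal P=\bigoplus_i \mathcal D_i$ into ideals, suppose some $x\in\mathcal D_i$ has $P_x|_{\mathcal D_i}$ with at least two eigenvalues. Apply Proposition~\ref{manyideals} to the transposed Poisson algebra $\mathcal D_i$ to split $\mathcal D_i=\bigoplus_\lambda(\mathcal D_i)_{x,\lambda}$ into at least two nonzero ideals. Each split strictly increases the number of nonzero summands, which is bounded by $\dim\mathcal P$, so the process terminates. At the end we have $\mathcal P=\bigoplus_{i}\mathcal E_i$ with ideals $\mathcal E_i$ such that for every $x\in\mathcal E_i$ the operator $P_x|_{\mathcal E_i}$ has a unique eigenvalue, i.e.\ characteristic polynomial $(t-\lambda_x)^{\dim \mathcal E_i}$.

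Now apply Lemma~\ref{lemuni} to each $(\mathcal E_i,\circ)$: each one is nilpotent or unital. Let $\Delta$ index the unital ones. Set $\mathcal D=\bigoplus_{i\in\Delta}\mathcal E_i$ and $\mathcal N=\bigoplus_{i\notin\Delta}\mathcal E_i$.

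\begin{itemize}
\item $\mathcal D$ is unital, with unit the sum of the units $e_i$, because distinct summands multiply to zero.
\item $\mathcal N$ is nilpotent, since a finite direct sum of nilpotent commutative algebras with zero cross products is nilpotent.
\item Both are ideals, being sums of ideals.
\end{itemize}

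The decomposition $\mathcal D=\bigoplus_{i\in\Delta}\mathcal D_i$ with $\mathcal D_i=\mathcal E_i$ has the required single-eigenvalue property.

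The main subtlety is the claim that a refinement inside $\mathcal D_i$ gives ideals of $\mathcal P$, and that Proposition~\ref{manyideals} may be applied to $\mathcal D_i$ rather than to $\mathcal P$. Both reduce to the annihilation of distinct direct summands, which I would state as a short preliminary observation. One should also check that Lemma~\ref{lemuni} applies to the zero algebra case trivially. That check is only needed if a summand could be zero, and this cannot happen because we only retain nonzero generalized eigenspaces.
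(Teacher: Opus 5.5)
Your proposal is correct and follows essentially the paper's argument: you split along generalized eigenspaces of multiplication operators using Proposition~\ref{manyideals} until every $P_x$ has a single eigenvalue on each piece, and then you sort the pieces into unital and nilpotent ones with Lemma~\ref{lemuni}. The paper organizes the recursion as an induction on $\dim\mathcal P$ rather than by counting summands, and it leaves implicit a fact you state explicitly, namely that an ideal of a direct summand is an ideal of $\mathcal P$.
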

\begin{proof}
We proceed by induction on $n=\textrm{dim}(\mathcal P)$. The case $n=1$ is clear.
Assume the result holds for transposed Poisson algebras of dimension at most $k$, and let
$\textrm{dim}(\mathcal P)=k+1$.

If for every $x\in \mathcal{P}$ we have that $P_x$ has a unique eigenvalue $\lambda_x$, then we are done by Lemma~\ref{lemuni}.  
Otherwise, choose $x\in \mathcal{P}$ such that $P_x$ has at least two distinct eigenvalues. Write $\Sigma_x$ for the spectrum of $P_x$. By
Proposition~\ref{manyideals}, we have
$$
\mathcal P=\bigoplus_{\lambda\in\Sigma_x}\mathcal P_{x,\lambda},
$$
where each $\mathcal P_{x,\lambda}$ is an ideal of $\mathcal P$. Since each $\mathcal P_{x,\lambda}$ has dimension at most $k$, for every
$\lambda\in\Sigma_x$ we have $
\mathcal P_{x,\lambda}=\mathcal D_\lambda\oplus \mathcal N_\lambda,$ 
where $(\mathcal D_\lambda,\circ)$ is
a unital ideal and $(\mathcal N_\lambda,\circ)$ is a nilpotent ideal, and
$$
\mathcal D_\lambda=\bigoplus_{j\in\Delta_\lambda}\mathcal D_{\lambda,j},$$
with each $\mathcal D_{\lambda,j}$ an ideal such that $P_y|_{\mathcal D_{\lambda,j}}$ has a unique
eigenvalue for $y\in\mathcal D_{\lambda,j}$. Set 
$\mathcal D=\oplus_{\lambda\in\Sigma_x}\mathcal D_\lambda,$ and 
$\mathcal N=\oplus_{\lambda\in\Sigma_x}\mathcal N_\lambda.$
Then $\mathcal D$ and $\mathcal N$ are ideals and
$\mathcal P=\mathcal D\oplus\mathcal N.$
In fact, $(\mathcal D,\circ)$ is
unital, with unit equal to the sum of the units of $\mathcal D_\lambda$, and $(\mathcal N,\circ)$
is nilpotent.
Also, $\mathcal{D}$ has the required property.
\end{proof}

We emphasize that this decomposition implies that $[\mathcal{D}_i, \mathcal{D}_j] = 0$, $\mathcal{D}_i\mathcal{D}_j = 0$, $[\mathcal{D}, \mathcal{N}] = 0$ and $\mathcal{D} \mathcal{N} = 0$ for $i\neq j$. Observe that the component $\mathcal{D}$ is well understood, since it is equivalent to an associative commutative algebra equipped with a derivation, as the Lie bracket is prescribed. Indeed, it was shown in \cite{tpa} that in a unital transposed Poisson algebra $(\mathcal{P}, \circ, [\cdot, \cdot])$, the Lie bracket is determined by $\circ$ and $Q_1$, where $1$ is the unit of $(\mathcal{P}, \circ)$. Precisely, it turns out that $Q_1$ is a derivation of $(\mathcal{P}, \circ)$ and we have $[x, y] = x[1, y] - [1, x] y$ for every $x, y \in \mathcal{P}$.  
By contrast, the component $\mathcal{N}$ has no such a nice description, and is the
only genuinely new part.

\begin{remark}
The finite-dimensional hypothesis in Theorem~\ref{decomp} is required. Indeed, the conclusion may fail even for associative algebras, as shown by the infinite-dimensional algebra $x\mathbb{F}[x]$.
\end{remark}

Moreover, recall that an algebra $\mathcal{P}$ is {\it directly indecomposable} if for any ideals $\mathcal{I}$, $\mathcal{J}$ such that $\mathcal{P} = \mathcal{I} \oplus \mathcal{J}$, we have that $\mathcal{I} = 0$ or $\mathcal{J}=0$. Then the following result is obtained.

\begin{corollary}
    Let $(\mathcal{P}, \circ, [\cdot, \cdot])$ be a transposed Poisson algebra  of dimension $n \in \mathbb{N}$ over an algebraically closed field $\mathbb{F}$. If the Lie algebra $(\mathcal{P}, [\cdot, \cdot])$ is directly indecomposable, then for every $x\in \mathcal{P}$ the characteristic polynomial of $P_x$ is $\chi_x(t) = (t-\lambda_x)^n$. 
\end{corollary}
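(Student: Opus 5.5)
The plan is to argue by contraposition, using Proposition~\ref{manyideals} directly rather than the full strength of Theorem~\ref{decomp}. Suppose that for some $x\in\mathcal{P}$ the characteristic polynomial $\chi_x(t)$ is not of the form $(t-\lambda_x)^n$. Since $\mathbb{F}$ is algebraically closed, $\chi_x$ splits into linear factors, so $P_x$ has at least two distinct eigenvalues. Let $\Sigma_x$ denote its spectrum. By the Jordan decomposition, the vector space splits as
$$\mathcal{P}=\bigoplus_{\lambda\in\Sigma_x}\mathcal{P}_{x,\lambda},$$
with at least two nonzero summands.

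By Proposition~\ref{manyideals}, each generalized eigenspace $\mathcal{P}_{x,\lambda}$ is an ideal of the transposed Poisson algebra $\mathcal{P}$. In particular, each is an ideal of the Lie algebra $(\mathcal{P},[\cdot,\cdot])$. Fix one eigenvalue $\lambda_0\in\Sigma_x$ and set
$$\mathcal{I}=\mathcal{P}_{x,\lambda_0},\qquad \mathcal{J}=\bigoplus_{\lambda\neq\lambda_0}\mathcal{P}_{x,\lambda}.$$
Then $\mathcal{J}$ is a sum of ideals, hence an ideal. We have $\mathcal{P}=\mathcal{I}\oplus\mathcal{J}$ as Lie algebras, with both $\mathcal{I}$ and $\mathcal{J}$ nonzero. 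This contradicts the assumption that $(\mathcal{P},[\cdot,\cdot])$ is directly indecomposable.

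The one point to check is that this is a genuine direct sum of Lie ideals, i.e.\ that $[\mathcal{I},\mathcal{J}]=0$. This follows because $[\mathcal{I},\mathcal{J}]\subseteq\mathcal{I}\cap\mathcal{J}=0$, since both are ideals. The same argument gives $\mathcal{I}\circ\mathcal{J}=0$, so the decomposition is even one of transposed Poisson algebras, though only the Lie structure is needed here.

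I do not expect a real obstacle: the heavy lifting (that generalized eigenspaces are stable under $Q_y$) is already contained in Proposition~\ref{manyideals}. The only subtlety is to use the version of direct indecomposability for the Lie algebra alone, and to note that ideals of $\mathcal{P}$ are in particular Lie ideals. Alternatively, one could invoke Theorem~\ref{decomp} and argue that all but one component must vanish, but the direct argument above is cleaner.
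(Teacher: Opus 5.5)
Your proof is correct and is essentially the paper's argument. The paper reads the corollary off Theorem~\ref{decomp}, and that theorem's proof rests on exactly your step: when $P_x$ has two distinct eigenvalues, $\mathcal{P}$ splits into the generalized eigenspaces $\mathcal{P}_{x,\lambda}$, which are ideals by Proposition~\ref{manyideals}. Going straight to Proposition~\ref{manyideals} simply skips the induction, which is not needed here.
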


\begin{remark}\label{rem1}
    In particular, if $(\mathcal{P}, \circ, [\cdot, \cdot])$ is a simple finite-dimensional transposed Poisson algebra over an algebraically closed field $\mathbb{F}$, then $(\mathcal{P}, [\cdot, \cdot])$ is a simple Lie algebra \cite[Theorem 11]{simple}. Hence, for every $x\in \mathcal{P}$ the characteristic polynomial of $P_x$ is $\chi_x(t) = (t-\lambda_x)^n$. Therefore, $(\mathcal{P}, \circ)$ is unital or nilpotent. 
\end{remark}

\section{Simple transposed Poisson algebras in prime characteristic}\label{sec3}

In this section we classify the simple finite-dimensional non-trivial transposed Poisson algebras over an algebraically closed field of characteristic \(p>3\). We first prove, using commutative \(2\)-cocycles, that the underlying Lie algebra is necessarily a Zassenhaus algebra. We then determine all transposed Poisson structures on the Zassenhaus algebras and obtain the classification.

\subsection{The underlying Lie algebra is a Zassenhaus algebra}

Recall that a {\it commutative $2$-cocycle} on a Lie algebra $\mathcal L$ is a symmetric bilinear map
$\varphi:\mathcal L\times \mathcal L\to \mathbb F$ such that for all $x,y,z\in \mathcal L$ we have
$
\varphi([x,y],z)+\varphi([z,x],y)+\varphi([y,z],x)=0.
$

\begin{proposition}\label{mainprop}
Let $(\mathcal{P},\circ,[\cdot,\cdot])$ be a simple finite-dimensional non-trivial transposed
Poisson algebra over an algebraically closed field $\mathbb F$ of characteristic $p>3$.
Then the Lie algebra $(\mathcal{P},[\cdot,\cdot])$ is isomorphic to the Zassenhaus algebra $\mathcal{W}(1;n)$ for some
$n\in\mathbb N$. 
\end{proposition}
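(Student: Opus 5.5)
Since $\mathcal P$ is simple, \cite[Theorem~11]{simple} tells us that $L=(\mathcal P,[\cdot,\cdot])$ is a simple Lie algebra. Non-triviality means $\circ\neq 0$. The plan is to use the two tools recalled in the introduction: commutative $2$-cocycles, and the fact that each $P_x$ is a $\tfrac12$-derivation of $L$. With these we exclude every simple Lie algebra in characteristic $p>3$ except the Zassenhaus algebras $\mathcal W(1;n)$.

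\textbf{Step 1: reduce to the case where $\frac12$-derivations exceed the centroid.} We claim some $P_x$ is not a scalar multiple of the identity. If every $P_x$ lay in the centroid of $L$, which equals $\mathbb F\,\mathrm{id}$ since $L$ is simple and $\mathbb F$ is algebraically closed, then $x\circ y=\lambda(x)y$ for a linear functional $\lambda$. Commutativity would give $\lambda(x)y=\lambda(y)x$ for all $x,y$. Since $\dim\mathcal P>1$, this forces $\lambda=0$ and hence $\circ=0$, a contradiction. Therefore $\mathrm{Der}_{\frac12}(L)$ strictly contains the centroid. This matches the cocycle viewpoint: choosing $\mu\in\mathcal P^*$ with $\mu(\mathcal P\circ\mathcal P)\ne0$ gives a non-zero commutative $2$-cocycle $\mu(x\circ y)$ on $L$.

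\textbf{Step 2: restrict to rank one.} By Leger and Luks \cite{leger}, if $L$ is simple of rank greater than one, its quasiderivations are the sum of its derivations and its centroid. Every $\frac12$-derivation is a quasiderivation. One then checks that such a map is central modulo the derivations: if $D=\partial+c$ is a $\frac12$-derivation with $\partial$ a derivation and $c$ central, then $\partial$ is both a derivation and (up to a centroid term) a $\frac12$-derivation. This forces $\partial([x,y])=0$, so $\partial=0$ because $L$ is perfect. Hence rank greater than one contradicts Step 1, and $L$ has rank one. By Benkart and Osborn \cite[Theorem~1.2]{benkart}, $L$ is then $\mathfrak{sl}_2$, some $\mathcal W(1;n)$, or a Hamiltonian algebra $\mathcal H(2;\underline m;\Phi(1))$.

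\textbf{Step 3: exclude $\mathfrak{sl}_2$ and the Hamiltonian case.} Both algebras admit a non-degenerate symmetric invariant form. For $\mathfrak{sl}_2$ this is the trace form, valid for $p>3$. For the Hamiltonian algebras see \cite[Section 4.6, Theorem 6.5]{sf88}. By \cite[Theorem~6]{fili98}, the $\frac12$-derivations of such an algebra coincide with its centroid, contradicting Step 1. The remaining possibility is $L\cong\mathcal W(1;n)$.

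\textbf{Main obstacle.} The hardest step is making the Hamiltonian exclusion rigorous. The form in \cite{sf88} is stated for $\mathcal H(2r;\underline m)$, and it must be verified for the rank-one filtered deformations $\mathcal H(2;\underline m;\Phi(1))$. If this does not transfer directly, my fallback is to compute commutative $2$-cocycles on these algebras via a torus grading, with the aim of showing they all vanish.
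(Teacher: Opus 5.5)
Step 1 and the argument inside Step 2 are fine. In particular, a $\frac12$-derivation $\partial+c$ with $\partial$ a derivation and $c$ central does force $\partial([x,y])=0$. The proof breaks in Step 3 for the Hamiltonian family, and not merely for lack of a reference: the premise is false.

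The non-$\mathfrak{sl}_2$ algebras in the Benkart--Osborn list are Albert--Zassenhaus algebras. They have a basis $u_\alpha$ indexed by an additive subgroup $G\subseteq\mathbb F$ with $|G|\geq p$, and bracket $[u_\alpha,u_\beta]=c(\alpha,\beta)u_{\alpha+\beta}$, where $c(\alpha,\beta)=\beta-\alpha+\alpha g(\beta)-\beta g(\alpha)$ for some additive map $g: G\to\mathbb F$. Since $\mathrm{ad}\,u_0$ acts on $u_\beta$ as multiplication by $\beta$, an invariant symmetric form $B$ can only pair $u_\alpha$ with $u_{-\alpha}$. Write $h(\alpha)=B(u_\alpha,u_{-\alpha})$. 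For $\alpha+\beta+\gamma=0$, invariance reads $c(\alpha,\beta)h(\gamma)=c(\beta,\gamma)h(\alpha)$.
\begin{itemize}
\item Taking $\alpha=\beta\neq0$ and $\gamma=-2\beta$ gives $0=c(\beta,-2\beta)h(\beta)=-3\beta\,h(\beta)$, whatever $g$ is, so $h(\beta)=0$.
\item Taking $\alpha=0$ then gives $\beta h(\beta)=-2\beta h(0)$, so $h(0)=0$.
\end{itemize}
Hence $B=0$ for $p>3$, and $\mathcal H(2;\underline m;\Phi(1))$ carries no non-zero invariant form. So \cite[Theorem~6]{fili98} cannot be applied. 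The result you cite from \cite{sf88} concerns the graded algebras $\mathcal H(2r;\underline m)$ and does not transfer to these filtered deformations. Your fallback, showing that commutative $2$-cocycles vanish on this family, is the right kind of statement. However, you do not carry it out, so as written the family $\mathcal H(2;\underline m;\Phi(1))$ is not excluded.

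The missing idea is that the rank-by-rank analysis is unnecessary. In Step 1 you already built the non-zero commutative $2$-cocycle $\mu(x\circ y)$ on the simple Lie algebra $L$. The paper feeds this cocycle directly into the Dzhumadil'daev--Zusmanovich result \cite[Corollary~2.5]{AP10}. That result says that over an algebraically closed field of characteristic $p>3$, a simple finite-dimensional Lie algebra with a non-zero commutative $2$-cocycle is $\mathfrak{sl}_2$ or a Zassenhaus algebra $\mathcal W(1;n)$. This removes Leger--Luks, Benkart--Osborn and the Hamiltonian case at once. It also removes the need to check that the rank hypotheses of those two theorems are complementary. Only $\mathfrak{sl}_2$ remains, and your trace-form argument with \cite{fili98} correctly excludes it.
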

\begin{proof}
Denote by $\mathcal L=(\mathcal P,[\cdot,\cdot])$ the associated Lie algebra. Since $\mathcal P$ is simple and non-trivial, the Lie algebra $\mathcal L$ is simple by \cite[Theorem~11]{simple}.
The product $\circ$ is symmetric and, since $\textrm{char}(\mathbb F)\neq 2$, the transposed Leibniz identity implies $
x\circ [y,z]+y\circ [z,x]+z\circ [x,y]=0
$. Choose $\lambda\in \mathcal P^*$ such that $\lambda(\mathcal P\circ \mathcal P)\neq 0$, and define $
\psi(x,y)=\lambda(x\circ y).$ Then $\psi$ is a non-zero commutative $2$-cocycle on the simple Lie algebra $\mathcal L$. Hence, by \cite[Corollary 2.5]{AP10}, the Lie algebra $\mathcal L$ is isomorphic either to $\mathfrak{sl}_2$ or to a Zassenhaus algebra $\mathcal W(1;n)$.

It remains to exclude the case $\mathfrak{sl}_2$. If $\mathcal L\cong \mathfrak{sl}_2$, then $\mathfrak{sl}_2$ admits no non-trivial $\frac12$-derivations in characteristic $p>3$ as it has a nondegenerate symmetric invariant bilinear form, see \cite[Corollary~3]{fili98}. Therefore, every transposed Poisson structure on $\mathfrak{sl}_2$ is trivial, a contradiction. Thus
$(\mathcal P,[\cdot,\cdot])\cong \mathcal W(1;n)
$
for some $n\in \mathbb N$.
\end{proof}

\subsection{The classification}

By Proposition~\ref{mainprop}, to obtain the classification of simple transposed Poisson algebras in prime characteristic we first have to construct all the possible transposed Poisson structures on $\mathcal{W}(1; n)$. 
 Let us recall the definition of the Zassenhaus algebra $\mathcal{W}(1; n)$, see \cite[Chapter 2]{s04} for details. 
Assume the base field $\mathbb{F}$ is algebraically closed and of characteristic $p>2$.
The divided powers algebra $\mathcal{O}(1; n)$ is the $p^n$-dimensional associative commutative algebra with unit defined by the generators $x^{(i)}$ for $0\leq i < N$, with $N=p^n$ and $n\geq 0$, and multiplication 
$$ x^{(i)}x^{(j)} =\binom{i+j}{j}x^{(i+j)},$$
with the convention $x^{(i)}=0$ if $i\geq N$. Also, we may write the unit $x^{(0)}$ as $1$. 
It is isomorphic to the algebra of truncated polynomials $\mathbb{F}[x_1, \ldots, x_n]/(x_1^p, \ldots, x_n^p)$. The Zassenhaus algebra $\mathcal{W}(1; n)$ is its algebra of special derivations, which can be identified with $\mathcal{O}(1; n)\partial$, where $\partial(x^{(i)})=x^{(i-1)}$. For the basis $e_i=x^{(i+1)}\partial$ where $-1\leq i\leq N-2$, the multiplication is given by the bracket
\begin{equation}\label{usual}
    [e_i, e_j]=  \partial(x^{(j+1)}) x^{(i+1)} \partial - \partial(x^{(i+1)}) x^{(j+1)}\partial
    = \left[\binom{i+j+1}{j}-\binom{i+j+1}{i}\right]e_{i+j}.
\end{equation}

Note that the algebra $\mathcal{W}(1; 1)$ is isomorphic to the modular Witt algebra. The transposed Poisson structures on this algebra were determined in \cite[Section 2.3]{simple}.
In order to obtain the transposed Poisson structures on $\mathcal{W}(1; n)$ for $n>1$, we need its $\frac{1}{2}$-derivations. The description of the $\frac{1}{2}$-derivations of $\mathcal{W}(1; n)$ was given  in \cite{z10} using the following realization due to Kuznetsov.  For $n>1$, the algebra $\mathcal{W}(1; n)$ can be seen as a deformation of the current Lie algebra $\mathcal{W}(1; 1)\otimes \mathcal{O}(1; n-1)$ with bracket $\left\{x, y\right\} = [x, y] + \Phi(x, y)$, where $\Phi$ is the $2$-cocycle given by $\Phi(e_i\otimes a, e_j\otimes b) = e_{p-2}\otimes (a\partial(b)-b\partial(a))$ for $i=j=-1$, and zero otherwise.
A basis of the space of $\frac{1}{2}$-derivations of $\mathcal{W}(1; 1)$ is $D^0, D^1, \ldots, D^{p-1}$, where $D$ is given by $D(e_i)=(i+2)e_{i+1}$ assuming the convention $e_k=0$ if $k>p-2$. Using Kuznetsov realization and deformation theory arguments, any $\frac{1}{2}$-derivation of $\mathcal{W}(1; n)$ is then a linear combination of the maps $\varphi \otimes R_u$, where $\varphi$ is a $\frac{1}{2}$-derivation of $\mathcal{W}(1; 1)$ and $R_u$ is the right multiplication operator for $u\in \mathcal{O}(1;n-1)$. 

\medskip

Now, we recall that a {\it mutation}
of an associative commutative algebra $(\mathcal{P}, \circ)$ is a new algebra $(\mathcal{P}, \circ_q)$ where  we have the product $x \circ_q y = x\circ q \circ y$ for any $x, y \in \mathcal{P}$ and a fixed $q \in \mathcal{P}$.  

\begin{definition}
Let $\mathbb F$ be a field of characteristic $p>2$, and let $n\in\mathbb N$.
For $q\in \mathcal W(1;n)$, denote by $\mathcal{W}_n(q)$ the algebra whose Lie bracket is the bracket (\ref{usual}) on $\mathcal W(1;n)$  and whose associative product is the mutation by $q$ of the commutative algebra $(\mathcal W(1;n),\bullet)$, where
\begin{equation}\label{asor}
    e_i\bullet e_j=\binom{i+j+2}{j+1}e_{i+j+1}.
\end{equation}
We may refer to the algebra $\mathcal{W}_n(e_{-1})$ as the model simple transposed Poisson algebra.
\end{definition}

\begin{remark}\label{idntifw}
As we have identified $\mathcal W(1;n)$ with $\mathcal O(1;n)\partial$, the product $(\mathcal W(1;n),\bullet)$ is precisely the product induced by the multiplication in $\mathcal O(1;n)$, namely
$(f\partial)\bullet(g\partial)=(fg)\partial$ for all $f, g\in \mathcal O(1;n).$ Indeed, for $f=x^{(i+1)}$ and $g=x^{(j+1)}$ (the general case follows by bilinearity) we have
$$
(f\partial)\bullet(g\partial)
=
e_i\bullet e_j
=
\binom{i+j+2}{j+1}e_{i+j+1}
=
\binom{i+j+2}{j+1}x^{(i+j+2)}\partial
=
(fg)\partial.
$$
\end{remark}

\begin{lemma}\label{isomm}
   The algebra $(\mathcal{W}(1; n), \bullet)$ is isomorphic to $\mathcal{O}(1;n)$ and $e_{-1}$ is the unital element.
\end{lemma}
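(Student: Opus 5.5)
The natural approach is to exhibit an explicit linear bijection and check that it carries one product to the other. Define $\theta:\mathcal O(1;n)\to \mathcal W(1;n)$ by $\theta(f)=f\partial$, so that on the basis $\theta(x^{(i+1)})=e_i$ for $-1\leq i\leq N-2$. Since $\{x^{(k)}\}_{0\leq k<N}$ is a basis of $\mathcal O(1;n)$ and $\{e_i\}_{-1\leq i\leq N-2}$ is a basis of $\mathcal W(1;n)$, with the index shift $k=i+1$ matching them bijectively, $\theta$ is a linear isomorphism of vector spaces.

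Next I will verify multiplicativity on basis elements, which suffices by bilinearity. For $0\le a,b<N$, setting $i=a-1$ and $j=b-1$, formula (\ref{asor}) gives
$$\theta(x^{(a)})\bullet\theta(x^{(b)})=e_{a-1}\bullet e_{b-1}=\binom{a+b}{b}e_{a+b-1}=\theta\Bigl(\binom{a+b}{b}x^{(a+b)}\Bigr)=\theta(x^{(a)}x^{(b)}).$$
This is exactly the computation in Remark~\ref{idntifw}. The only bookkeeping point is the truncation: when $a+b\geq N$ we have $x^{(a+b)}=0$ in $\mathcal O(1;n)$, and correspondingly $e_{a+b-1}=0$ by the convention $e_k=0$ for $k>N-2$. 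So both sides vanish simultaneously, and I expect this boundary check to be the only step needing care. Thus $\theta$ is an algebra isomorphism. In particular $\bullet$ inherits associativity and commutativity from $\mathcal O(1;n)$, so no separate verification of these is needed.

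Finally, the unit $x^{(0)}=1$ of $\mathcal O(1;n)$ is mapped by $\theta$ to $e_{-1}$, so $e_{-1}$ is the unit of $(\mathcal W(1;n),\bullet)$. This can also be checked directly from (\ref{asor}): $e_{-1}\bullet e_j=\binom{j+1}{j+1}e_j=e_j$.
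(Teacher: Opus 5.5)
Your proposal is correct and is essentially the paper's proof: the paper uses $\Phi(e_i)=x^{(i+1)}$, which is the inverse of your $\theta$, and calls the basis verification straightforward. Your computation $e_{a-1}\bullet e_{b-1}=\binom{a+b}{b}e_{a+b-1}$ fills in that omitted check, together with the truncation case $a+b\geq N$ and the direct check $e_{-1}\bullet e_j=e_j$.
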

\begin{proof}
    The algebra $(\mathcal{W}(1; n), \bullet)$ is isomorphic to $\mathcal{O}(1;n)$ by choosing the homomorphism 
    $$
\Phi:(\mathcal{W}(1;n),\bullet)\longrightarrow \mathcal{O}(1;n),\qquad 
\Phi(e_i)=x^{(i+1)},\quad -1\leq i\leq N-2.
$$
The verification is straightforward. 
\end{proof}

{
\begin{theorem}\label{clasip}
    Let $\mathcal{P}$ be a simple finite-dimensional non-trivial transposed Poisson algebra over an algebraically closed field {of characteristic $p>3$}. Then there is $n\in \mathbb{N}$ such that $\textrm{dim}\,(\mathcal{P}) = p^n$ and  $q\in \mathcal W(1;n)$ such that 
    $\mathcal{P}$ is isomorphic to the algebra $\mathcal{W}_n(q)$. 
\end{theorem}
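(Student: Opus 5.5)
By Proposition~\ref{mainprop}, the Lie algebra $(\mathcal{P},[\cdot,\cdot])$ is isomorphic to $\mathcal{W}(1;n)$ for some $n$. Since $\dim \mathcal{W}(1;n)=p^n$, the dimension claim follows at once. We may therefore identify $\mathcal{P}$ with $\mathcal{W}(1;n)$ carrying the bracket (\ref{usual}) and some commutative associative product $\circ$ satisfying the transposed Leibniz rule. It then suffices to show that $\circ$ coincides with $\bullet$ mutated by a suitable $q\in\mathcal{W}(1;n)$.

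The plan uses the $\frac12$-derivation viewpoint. The transposed Leibniz rule says exactly that each $P_x$ is a $\frac12$-derivation of $\mathcal{W}(1;n)$. The first step is to identify $\textrm{Der}_{\frac12}(\mathcal{W}(1;n))$ with the multiplication operators of $(\mathcal{W}(1;n),\bullet)$. From the description recalled from \cite{z10}, every $\frac12$-derivation is a combination of maps $D^k\otimes R_u$ in the Kuznetsov realization. Under the identification $\mathcal{W}(1;n)=\mathcal{O}(1;n)\partial$ of Remark~\ref{idntifw}, I expect these to be precisely the maps $f\partial\mapsto (gf)\partial$ for $g\in\mathcal{O}(1;n)$, that is, the operators $L^\bullet_w\colon z\mapsto w\bullet z$ for $w\in\mathcal{W}(1;n)$.

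To verify this, I would first check directly that each $L^\bullet_w$ is a $\frac12$-derivation. We have $[g f_1\partial, f_2\partial]+[f_1\partial, g f_2\partial]=2g[f_1\partial,f_2\partial]$, because the terms $f_1 f_2\partial(g)$ cancel. This also shows that $(\mathcal{W}(1;n),\bullet,[\cdot,\cdot])$ is itself a transposed Poisson algebra. The maps $L^\bullet_w$ span a space of dimension $p^n$. I would then compare this with the dimension of the space spanned by the $D^k\otimes R_u$, which is $p\cdot p^{n-1}=p^n$, and conclude that the two spaces coincide. This identification, including matching the Kuznetsov realization with the $\mathcal{O}(1;n)\partial$ picture, is the step I expect to be the main obstacle. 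A direct alternative is to use the grading: a $\frac12$-derivation $T$ commutes with $\textrm{ad}\,e_0$ up to a shift of degree, and $T$ is determined by $T(e_{-1})$ together with the values $T(e_{p^j-1})$ on generators. This alternative computation could be messy.

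Granting the identification, there is a linear map $\theta\colon\mathcal{P}\to\mathcal{W}(1;n)$ with $x\circ y=\theta(x)\bullet y$. Evaluating at the unit, commutativity gives $\theta(x)\bullet y=\theta(y)\bullet x$, and setting $y=e_{-1}$ gives $\theta(x)=\theta(e_{-1})\bullet x$. Put $q=\theta(e_{-1})$. Then $x\circ y=q\bullet x\bullet y$, so $\circ=\bullet_q$ and $\mathcal{P}\cong\mathcal{W}_n(q)$. Associativity of $\circ$ is then automatic, so it imposes no extra condition. Non-triviality forces $q\neq0$. If one further wants to use simplicity, Remark~\ref{rem1} shows that $\bullet_q$ is unital or nilpotent, but this is not needed for the stated classification.
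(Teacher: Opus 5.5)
Your proof is correct. It rests on the same two ingredients as the paper: each $P_x$ is a $\frac12$-derivation, and the description from \cite{z10} of $\textrm{Der}_{\frac12}$ in Kuznetsov's realization. You execute the argument differently, though.

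The paper works in coordinates. It expands each $\phi_{i,j}$ in the basis $D^k\otimes x^{(l)}$ of the deformed current algebra and uses commutativity against $E_{-1,0}$ to express every $\varphi^{r,s}_{k,l}$ through $\varphi^{-1,0}_{k,l}$. It then recognizes the product as a mutation of an auxiliary product $*$, and still has to transport $*$ to $\bullet$ via $e_i\otimes x^{(j)}\mapsto e_{jp+i}$.

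You use \cite{z10} only for the bound $\dim\textrm{Der}_{\frac12}(\mathcal{W}(1;n))\leq p^n$; only the inequality is needed, not the exact dimension of the span of the $D^k\otimes R_u$. The inclusion $\{L^\bullet_w : w\in\mathcal{W}(1;n)\}\subseteq\textrm{Der}_{\frac12}(\mathcal{W}(1;n))$, with the left side of dimension $p^n$ (injective because $L^\bullet_w(e_{-1})=w$), then gives the intrinsic identification $\textrm{Der}_{\frac12}(\mathcal{W}(1;n))=\{L^\bullet_w : w\in\mathcal{W}(1;n)\}$. Setting $y=e_{-1}$ in $\theta(x)\bullet y=\theta(y)\bullet x$ then finishes in one line.

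The step you flagged as the main obstacle is not one. For any Lie isomorphism $\sigma$ and $\frac12$-derivation $T$, the map $\sigma T\sigma^{-1}$ is again a $\frac12$-derivation. So the bound transfers to the standard realization with no explicit matching of $D^k\otimes R_u$ with $L^\bullet_w$, and the grading alternative is unnecessary.

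Your route avoids all the coefficient bookkeeping and the realization-matching step that the paper calls a straightforward verification. As a by-product it shows that every $\mathcal{W}_n(q)$ really is a transposed Poisson algebra, since multiplication by $x$ in $\bullet_q$ is $L^\bullet_{q\bullet x}$. The paper's computation, in exchange, yields explicit structure constants in Kuznetsov's basis.
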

\begin{proof}        
       Let $\mathcal{P}$ be a simple finite-dimensional non-trivial transposed Poisson algebra over an algebraically closed field $\mathbb{F}$ {of characteristic $p>3$}. By Proposition~\ref{mainprop}, we can assume that the Lie bracket of $\mathcal{P}$ is the Zassenhaus algebra. Therefore, we proceed with the construction of the transposed Poisson structures on $\mathcal{W}(1; n)$. 
       
       A basis of the Zassenhaus algebra $\mathcal{W}(1; n+1)$ with Kuznetsov's realization for $n\geq 0$ is given by $E_{i,j}=e_i\otimes x^{(j)}$ for $-1\leq i \leq p-2$ and $0\leq j \leq p^n-1$. Suppose $(\mathcal{W}(1; 1)\otimes \mathcal{O}(1; n), \circ)$ defines a transposed Poisson structure. Then for every $E_{i,j}$ there is an associated $\frac{1}{2}$-derivation $\phi_{i,j}$, such that $(e_i\otimes x^{(j)})\circ (e_r\otimes x^{(s)}) = \phi_{i,j}(e_r\otimes x^{(s)})$.       
        
        Any $\frac{1}{2}$-derivation is a linear combination of maps $\varphi\otimes R_u$ for $\varphi\in \textrm{Der}_{\frac{1}{2}}(\mathcal{W}(1;1))$ and $u\in \mathcal{O}(1;n)$. Let us simply denote it by $\varphi\otimes u$. Also, let us denote $\circ$ by concatenation. A basis for the space of $\frac{1}{2}$-derivations is given by the maps $D^{k}\otimes x^{(l)}$ where $0 \leq k\leq p-1$ and $0\leq l<p^{n}$.        
        So assume $\phi_{i,j}=\sum_{k,l} \varphi^{i,j}_{k,l} (D^{k}\otimes x^{(l)})$ for some $\varphi^{i,j}_{k,l} \in \mathbb{F}$. 
        
        The commutativity of the multiplication $\circ$ is equivalent to       
        $\phi_{i,j}(e_r\otimes x^{(s)}) = \phi_{r,s}(e_i\otimes x^{(j)})$, so we have the relation
        \begin{equation}\label{eq5}
        \begin{split}
           \phi_{i,j}(e_r\otimes x^{(s)})  &= \sum_{k=0}^{p-1}\sum_{l=0}^{p^n -1} \varphi^{i,j}_{k,l} (D^{k}(e_r)\otimes x^{(l)}x^{(s)}) \\ 
            & = \sum_{k=0}^{p-1}\sum_{l=0}^{p^n -1} \varphi^{r, s}_{k,l} (D^{k}(e_i)\otimes x^{(l)}x^{(j)}) =\phi_{r,s}(e_i\otimes x^{(j)}).
        \end{split}
        \end{equation}

        Observe that $D^{k}(e_r)=\prod_{t=1}^{k}(r+t+1)e_{k+r}$, where we assume the convention $e_{i} = 0$ if $i>p-2$. For $i=-1$ and $j=0$, we have the following two equations from the left hand side and the right hand side of equation (\ref{eq5}), respectively.
\begin{equation*}
\begin{split}
        \sum_{k=0}^{p-1}\sum_{l=0}^{p^n -1} \varphi^{-1,0}_{k,l} (D^{k}(e_r)\otimes x^{(l)}x^{(s)}) & = \sum_{k=0}^{p-1}\sum_{l=0}^{p^n -1} \varphi^{-1,0}_{k,l} \prod_{t=1}^{k}(r+t+1) \binom{l+s}{s}(e_{k+r}\otimes x^{(l+s)})\\
        &=\sum_{k=r}^{p+r-1}\sum_{l=s}^{p^n+s -1} \varphi^{-1,0}_{k-r,l-s} \prod_{t=1}^{k-r}(r+t+1) \binom{l}{s}(e_{k}\otimes x^{(l)}).\\
        \sum_{k=0}^{p-1}\sum_{l=0}^{p^n -1} \varphi^{r, s}_{k,l} (D^{k}(e_{-1})\otimes x^{(l)}) &= \sum_{k=0}^{p-1}\sum_{l=0}^{p^n -1} \varphi^{r, s}_{k,l} k! (e_{k-1}\otimes x^{(l)}) \\ &= \sum_{k=-1}^{p-2}\sum_{l=0}^{p^n -1} \varphi^{r, s}_{k+1,l} (k+1)! (e_{k}\otimes x^{(l)}).
 \end{split}
\end{equation*}

 By equation (\ref{eq5}), we obtain the relations $\varphi^{-1,0}_{k-r,l-s} \binom{l}{s} = \varphi^{r, s}_{k+1,l} (r+1)!$ for $r\leq k\leq p-2$ and $s\leq l\leq p^n-1$, and $\varphi^{r, s}_{k+1,l} = 0$ for $k<r$ or $l<s$. Equivalently, we can write them as
\begin{equation*}
\varphi^{r,s}_{k,l}=
\begin{cases}
\dfrac{\varphi^{-1,0}_{k-r-1,\;l-s}\binom{l}{s}}{(r+1)!},
& \text{if } r+1\leq k\leq p-1 \textrm{ and } s\leq l\leq p^{\,n}-1,\\[6pt]
0,
& \text{if } k\leq r \text{ or } l<s.
\end{cases}
\end{equation*}

These conditions are enough to obtain the next description of the product: 
\begin{equation*} 
\begin{split}
\phi_{r,s}(e_i\otimes x^{(j)}) &= \sum_{k=0}^{p-1}\sum_{l=0}^{p^n -1} \varphi^{r, s}_{k,l} (D^{k}(e_i)\otimes x^{(l)}x^{(j)})\\ & = \sum_{k=r+1}^{p-1}\sum_{l=s}^{p^n -1} \frac{\varphi^{-1,0}_{k-r-1,l-s}}{(r+1)!} \binom{l}{s}  \prod_{t=1}^{k}(i+t+1) (e_{k+i}\otimes x^{(l)} x^{(j)})\\
 & =  \sum_{k=-1}^{p-r-3}\sum_{l=0}^{p^n-s -1} \frac{\varphi^{-1,0}_{k+1,l}}{(r+1)!} \binom{l+s}{s} \prod_{t=1}^{k+r+2}(i+t+1) (e_{k+r+i+2}\otimes x^{(l+s)}x^{(j)})\\
 & = \sum_{k=-1}^{p-2}\sum_{l=0}^{p^n-1}\varphi^{-1,0}_{k+1,l} (k+1)! (e_{k}\cdot e_r\cdot e_i \otimes x^{(l)}x^{(s)}x^{(j)})\\
 &=\left[ \sum_{k=-1}^{p-2}\sum_{l=0}^{p^n -1}\varphi^{-1,0}_{k+1,l} (k+1)! (e_{k}\otimes x^{(l)})\right]*(e_{r}\otimes x^{(s)})*(e_i\otimes x^{(j)}),
 \end{split}
\end{equation*}
where $(\mathcal{W}(1; 1)\otimes \mathcal{O}(1; n), *)$ is the algebra with product $(e_i\otimes x^{(j)})*(e_{r}\otimes x^{(s)}) = e_{i}\cdot e_{r}\otimes x^{(j)} x^{(s)}$ and $(\mathcal{W}(1;1), \cdot)$ is the algebra with product $e_i\cdot e_j= \binom{i+j+2}{j+1}e_{i+j+1}$. 

Consider the linear isomorphism given by $e_i\otimes x^{(j)}\rightarrow e_{jp+i}$ for $-1\leq i\leq p-2$ and $0\leq j < p^n$. It is an isomorphism between Kuznetsov realization and $\mathcal{W}(1; n+1)$ with the usual realization (\ref{usual}).
A straightforward verification shows that it maps $(\mathcal{W}(1; 1)\otimes \mathcal{O}(1; n), *)$ to $(\mathcal{W}(1; n+1), \bullet)$, where the product is given by the rule 
introduced in equation (\ref{asor}). 
Furthermore, setting 
$$q= \sum_{k=-1}^{p-2}\sum_{l=0}^{p^n -1}\varphi^{-1,0}_{k+1,l} (k+1)! (e_k\otimes x^{(l)}),$$ it is clear that any transposed Poisson structure is a mutation of the associative and commutative algebra $(\mathcal{W}(1; 1)\otimes \mathcal{O}(1; n), *)$. 
Finally, being a mutation of an associative commutative algebra already implies the associativity and the commutativity.  
\end{proof}
} 

\subsection{The classification in characteristic three}

The classification obtained in Theorem~\ref{clasip} is stated for characteristic $p>3$. The argument used reduces the problem to the Zassenhaus algebras $\mathcal{W}(1;n)$, and then all the corresponding transposed Poisson structures are described by the family $\mathcal{W}_n(q)$. It is natural to ask whether the same description remains valid for $p=3$.

\begin{conjecture}
Let $\mathbb{F}$ be an algebraically closed field of characteristic three. Then every simple finite-dimensional non-trivial transposed Poisson algebra over $\mathbb{F}$ is isomorphic to $\mathcal{W}_n(q)$ for $n\in \mathbb{N}$ and $q\in \mathcal{W}(1;n)$. Equivalently, the associated Lie algebra of every simple finite-dimensional non-trivial transposed Poisson algebra in characteristic three should be a Zassenhaus algebra $\mathcal{W}(1;n)$.
\end{conjecture}

The difficulty in characteristic three lies in the reduction step on the Lie algebra side. More precisely, the proof in characteristic $p>3$ uses the existence of a subalgebra of codimension one together with the description of simple Lie algebras having such a subalgebra. The latter description still applies in characteristic three. Namely, by \cite[Theorem 3.9]{BIO} simple Lie algebras over an algebraically closed field of characteristic $p>2$ with a subalgebra of codimension one are $\mathfrak{sl}_2$ or a Zassenhaus algebra.

On the other hand, by the decomposition results of Section~\ref{sec2}, if $(\mathcal P,\circ,[\cdot,\cdot])$ is a simple finite-dimensional transposed Poisson algebra over an algebraically closed field, then the associative multiplication is either unital or nilpotent. In the unital case, we can still easily prove the existence of a subalgebra of codimension one in the associated Lie algebra.

\begin{lemma}\label{codim1tpauni}
Let $(\mathcal{P}, \circ, [\cdot, \cdot])$ be a finite-dimensional transposed Poisson algebra  over an algebraically closed field $\mathbb{F}$.
Assume that $(\mathcal P,\circ)$ is unital. Then the Lie algebra $(\mathcal P,[\cdot,\cdot])$
has a subalgebra of codimension one.
\end{lemma}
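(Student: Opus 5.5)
The plan is to use the description of unital transposed Poisson algebras recalled after Theorem~\ref{decomp}. If $1$ denotes the unit of $(\mathcal P,\circ)$, then $d=Q_1$ is a derivation of $(\mathcal P,\circ)$ and the bracket is recovered by
$$[x,y]=x\circ d(y)-d(x)\circ y \qquad (x,y\in\mathcal P).$$
We may assume $\mathcal P\neq 0$; otherwise there is nothing to prove.

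The candidate subalgebra will be a maximal ideal $\mathcal M$ of the associative commutative algebra $(\mathcal P,\circ)$, taken as an ideal for $\circ$ only.

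\textbf{Step 1: $\mathcal M$ has codimension one.} Since $(\mathcal P,\circ)$ is unital, nonzero and finite-dimensional, it has a proper ideal of maximal dimension, and this ideal is maximal. The quotient $\mathcal P/\mathcal M$ is then a field that is a finite extension of $\mathbb F$. Because $\mathbb F$ is algebraically closed, $\mathcal P/\mathcal M\cong\mathbb F$, so $\mathcal M$ has codimension one in $\mathcal P$.

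\textbf{Step 2: $\mathcal M$ is closed under the bracket.} Take $x,y\in\mathcal M$. The element $d(y)$ lies in $\mathcal P$, so $x\circ d(y)\in\mathcal M$ because $\mathcal M$ is an ideal for $\circ$. Similarly, $d(x)\circ y\in\mathcal M$. Hence
$$[x,y]=x\circ d(y)-d(x)\circ y\in\mathcal M.$$
Therefore $\mathcal M$ is a Lie subalgebra of $(\mathcal P,[\cdot,\cdot])$ of codimension one.

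The only point needing care is the formula $[x,y]=x[1,y]-[1,x]y$ for unital transposed Poisson algebras. It follows from the transposed Leibniz rule with one argument equal to $1$: applying $2x\circ[y,z]=[x\circ y,z]+[y,x\circ z]$ with $y=1$ gives $2x\circ[1,z]=[x,z]+[1,x\circ z]$. Combining this with the fact that $Q_1$ is a derivation of $\circ$ yields the formula. Since the paper already records this fact from \cite{tpa}, I expect no real obstacle; the argument is essentially Step 2.
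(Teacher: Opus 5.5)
Your proposal is correct and matches the paper's proof: take a maximal ideal of $(\mathcal P,\circ)$, note that it has codimension one because $\mathbb F$ is algebraically closed, and get closure under the bracket from $[a,b]=a[1,b]-[1,a]b$. Your derivation of that formula from the transposed Leibniz rule with $y=1$, combined with $Q_1$ being a derivation of $\circ$, is a correct justification of a fact the paper only cites from \cite{tpa}.
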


\begin{proof}
Let $A$ be a maximal ideal of $(\mathcal{P}, \circ)$. Then the quotient $\mathcal{P}/A$ is a simple associative commutative algebra, hence one-dimensional as the base field is algebraically closed.
It follows that $A$ is a Lie subalgebra of $(\mathcal P,[\cdot,\cdot])$, since in a unital transposed Poisson algebra we have $[a, b] = a[1,b]-[1,a]b\in A$, for every $a,b \in A$.
\end{proof}

Therefore, the conjecture is reduced to the nilpotent case. More precisely, it remains to determine whether the Lie algebra of a simple finite-dimensional transposed Poisson algebra with nilpotent associative multiplication must admit a subalgebra of codimension one in characteristic three. This question is particularly delicate because, unlike the case $p>3$, the classification of finite-dimensional simple Lie algebras over an algebraically closed field of characteristic three is not yet complete. However, the computations carried out by the author in GAP for several known families in characteristic three, including examples due to Kostrikin, Brown, Ermolaev, Kuznetsov and Skryabin, do not produce any counterexample.

\section{The family $\mathcal{W}_n(q)$}\label{sec4}

The purpose of this section is to determine when two algebras $\mathcal{W}_n(q)$ and $\mathcal{W}_n(q')$ are isomorphic. Given a non-zero element  $q=\sum_{i=-1}^{p^n-2}q_i e_i\in \mathcal W(1;n)$, we write $\nu(q)=\min\{\,i:\ q_i\neq 0\,\}$. Also, denote by $\tilde{q}$ the element in $\mathcal{O}(1;n)$ such that $q = \tilde{q}\partial$. Precisely, we have $\tilde{q} = \Phi(q)$, where $\Phi$ is the isomorphism defined in Lemma~\ref{isomm}.

\begin{lemma}
An element
$$
a=\sum_{i=-1}^{p^n-2} a_i e_i\in (\mathcal W(1;n),\bullet)
$$
is invertible if and only if $a_{-1}\neq 0$.
\end{lemma}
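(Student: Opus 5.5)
By Lemma~\ref{isomm}, the map $\Phi$ is an isomorphism of unital associative commutative algebras from $(\mathcal W(1;n),\bullet)$ onto $\mathcal O(1;n)$ with $\Phi(e_{-1})=1$. Invertibility is preserved by such an isomorphism. It therefore suffices to show that $f=\Phi(a)=\sum_{i=-1}^{p^n-2}a_i x^{(i+1)}$ is invertible in $\mathcal O(1;n)$ if and only if its constant term $a_{-1}$ is non-zero.

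For necessity, consider the subspace $\mathfrak m=\operatorname{span}\{x^{(j)}: j\geq 1\}$. By the product rule $x^{(i)}x^{(j)}=\binom{i+j}{j}x^{(i+j)}$, the space $\mathfrak m$ is an ideal, and $\mathcal O(1;n)=\mathbb F 1\oplus \mathfrak m$. The projection $\varepsilon:\mathcal O(1;n)\to\mathbb F$ onto the coefficient of $1$ is then an algebra homomorphism with $\varepsilon(1)=1$. If $fg=1$, then $\varepsilon(f)\varepsilon(g)=1$, so $a_{-1}=\varepsilon(f)\neq 0$.

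For sufficiency, write $f=a_{-1}(1-m)$ with $m=-a_{-1}^{-1}(f-a_{-1})\in\mathfrak m$. Every element of $\mathfrak m$ is nilpotent: the product of $k$ basis elements of $\mathfrak m$ is a multiple of some $x^{(j)}$ with $j\geq k$, so $\mathfrak m^{N}=0$ for $N=p^n$, where $x^{(j)}=0$ for $j\geq N$. Hence $m^N=0$, and the finite geometric series gives
\[
f^{-1}=a_{-1}^{-1}\sum_{k=0}^{N-1}m^k .
\]
Applying $\Phi^{-1}$ transports this inverse back to $(\mathcal W(1;n),\bullet)$.

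The only step needing care is the nilpotency of $\mathfrak m$, which follows directly from the grading $x^{(i)}x^{(j)}\in\mathbb F x^{(i+j)}$ together with the truncation at $N$. Everything else is formal.
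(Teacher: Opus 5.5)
Your proof is correct and takes essentially the same approach as the paper. The paper simply asserts that $(\mathcal W(1;n),\bullet)$ is local with maximal ideal spanned by $e_0,\dots,e_{p^n-2}$. Your augmentation homomorphism $\varepsilon$ and the nilpotency $\mathfrak m^{N}=0$ supply the justification of that locality which the paper leaves implicit.
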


\begin{proof}
The algebra  $(\mathcal W(1;n),\bullet)$ is local and the maximal ideal $I$ of $(\mathcal W(1;n),\bullet)$ is spanned by $\left\{e_0, \ldots, e_{p^n-2}\right\}$. Therefore, an element $a\in (\mathcal W(1;n),\bullet)$ is invertible if and only if $a\not\in I$. 
\end{proof}

\begin{lemma}\label{invuni}
Given $q\in \mathcal{W}(1;n)$, then $\mathcal{W}_n(q)$ is unital if and only if $q$ is invertible in $(\mathcal W(1;n),\bullet)$. 
\end{lemma}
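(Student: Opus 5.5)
Write $A=(\mathcal W(1;n),\bullet)$. By Lemma~\ref{isomm}, $A$ is a unital associative commutative algebra with unit $e_{-1}$, and $\mathcal W_n(q)$ has product $x\circ_q y = x\bullet q\bullet y$. So the statement concerns only the mutation of $A$ by $q$; the Lie bracket plays no role. The plan is to prove the two implications separately.

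\emph{Sufficiency.} Suppose $q$ is invertible in $A$, with inverse $q^{-1}$. Set $u=q^{-1}$. Then for every $x$,
$$u\circ_q x = q^{-1}\bullet q\bullet x = e_{-1}\bullet x = x,$$
and commutativity gives $x\circ_q u = x$ as well. Hence $u$ is a unit of $(\mathcal W(1;n),\circ_q)$.

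\emph{Necessity.} Suppose $\mathcal W_n(q)$ has a unit $u$, so $u\bullet q\bullet x = x$ for all $x$. Take $x=e_{-1}$ to get $u\bullet q = e_{-1}$. This shows $q$ is invertible in $A$, with inverse $u$. As a more hands-on alternative, we can use the preceding lemma. If $q$ were not invertible, then $q$ would lie in the maximal ideal $I=\operatorname{span}\{e_0,\dots,e_{p^n-2}\}$. Then $x\circ_q y\in I$ for all $x,y$, so $e_{-1}\notin \mathcal W_n(q)\circ_q\mathcal W_n(q)$. But in a unital algebra every element is a product, namely $e_{-1}=u\circ_q e_{-1}$, which is a contradiction.

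No step is a genuine obstacle. The only thing to get right is that the unit of $A$ is $e_{-1}$ and that $A$ is associative and commutative, which is exactly what Lemma~\ref{isomm} provides.
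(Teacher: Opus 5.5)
Your proof is correct and is essentially the paper's own argument: you evaluate the unit $u$ on $e_{-1}$ to get $u\bullet q=e_{-1}$, and conversely you check that $q^{-1}$ is a unit for $\circ_q$. Your alternative argument through the maximal ideal $I$ is also valid, but it is not needed.
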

\begin{proof}
If $\mathcal{W}_n(q)$ is unital with unit $u$, then $e_{-1} = u\bullet_q e_{-1} = q\bullet u = u\bullet q$. Hence, $u$ is the inverse of $q$ in $(\mathcal W(1;n),\bullet)$. 
Conversely, if $q$ is invertible in $(\mathcal W(1;n),\bullet)$, then $q^{-1}$ is a unit. 
\end{proof}

\begin{lemma}\label{lemism}
Let $\mathbb{F}$ be an algebraically closed field of characteristic $p>2$ and let $n\in \mathbb{N}$. Assume $(p,n)\neq (3,1)$. Let $\Psi\in \textrm{Aut}(\mathcal W(1;n))$. Then $\Psi(e_{-1})$ is invertible in the algebra $(\mathcal W(1;n),\bullet)$ and $$ \Psi(x\bullet y)=\Psi(e_{-1})^{-1}\bullet \Psi(x)\bullet \Psi(y) $$ for all $x,y\in \mathcal W(1;n)$.
\end{lemma}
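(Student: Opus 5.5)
The plan is to use the known description of automorphisms of $\mathcal W(1;n)$ and to show that the transposed Poisson structures on $\mathcal W(1;n)$ form an $\mathrm{Aut}$-stable family.

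The key observation is that transposed Poisson structures on $\mathcal W(1;n)$ are transported by automorphisms. Fix $\Psi\in\mathrm{Aut}(\mathcal W(1;n))$ and define $x\star y=\Psi(\Psi^{-1}(x)\bullet\Psi^{-1}(y))$. Since $(\mathcal W(1;n),\bullet,[\cdot,\cdot])$ is the transposed Poisson algebra $\mathcal W_n(e_{-1})$ and $\Psi$ preserves the bracket, $(\mathcal W(1;n),\star,[\cdot,\cdot])$ is again a transposed Poisson algebra. It is non-trivial, and it is simple because its Lie algebra is simple. By the proof of Theorem~\ref{clasip}, which describes \emph{all} transposed Poisson structures on $\mathcal W(1;n)$ (and by \cite{simple} for $n=1$), there is $q\in\mathcal W(1;n)$ with
\[
x\star y=q\bullet x\bullet y \quad\text{for all } x,y.
\]
Now $\star$ is unital with unit $\Psi(e_{-1})$, because $e_{-1}$ is the unit of $\bullet$ by Lemma~\ref{isomm}. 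By Lemma~\ref{invuni}, $q$ is invertible in $(\mathcal W(1;n),\bullet)$ and $\Psi(e_{-1})=q^{-1}$. Hence $\Psi(e_{-1})$ is invertible and $q=\Psi(e_{-1})^{-1}$. Substituting $x\mapsto\Psi(x)$ and $y\mapsto\Psi(y)$ gives $\Psi(x\bullet y)=\Psi(e_{-1})^{-1}\bullet\Psi(x)\bullet\Psi(y)$, which is the claim.

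The main obstacle is the input that every transposed Poisson structure on $\mathcal W(1;n)$ is a mutation of $\bullet$. Theorem~\ref{clasip} was stated for $p>3$, but its proof only uses the description of $\mathrm{Der}_{\frac12}(\mathcal W(1;n))$ from \cite{z10} together with Kuznetsov's realization. I would check that these hold for $p>2$ whenever $(p,n)\neq(3,1)$. The excluded case is exactly the one where $\mathcal W(1;1)\cong\mathfrak{sl}_2$ in characteristic $3$, and there the $\frac12$-derivation description differs. I expect this verification to be the delicate point of the argument.

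If it fails, I would fall back on Wilson's description of $\mathrm{Aut}(\mathcal W(1;n))$. Every automorphism is induced by an admissible automorphism $\sigma$ of $\mathcal O(1;n)$ via $\Psi(f\partial)=\sigma(f)\,\sigma(\partial(x))^{-1}\partial$, where $\sigma(x)$ has invertible derivative. In that form a direct computation gives
\[
\Psi(f\partial)\bullet\Psi(g\partial)=\sigma(fg)\,w^{-2}\partial, \qquad w=\sigma(\partial(x)),
\]
so that $\Psi(e_{-1})^{-1}\bullet\Psi(f\partial)\bullet\Psi(g\partial)=\sigma(fg)\,w^{-1}\partial=\Psi((f\partial)\bullet(g\partial))$. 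This yields the identity directly.
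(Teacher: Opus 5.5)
Your transport-of-structure argument is logically sound once its input is granted. $\Psi(e_{-1})$ is the unit of $\star$, so if $\star=\bullet_q$, then Lemma~\ref{invuni} forces $q$ to be invertible with $\Psi(e_{-1})=q^{-1}$, and the identity follows. The gap is the input for $p=3$, $n\geq 2$, a range the lemma explicitly covers.

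The paper proves that every transposed Poisson structure on $\mathcal W(1;n)$ is a mutation of $\bullet$ only for $p>3$. For $p=3$ the check you propose does not go through. The proof of Theorem~\ref{clasip} rests on $\mathrm{Der}_{\frac12}(\mathcal W(1;1))$ being spanned by $D^0,\dots,D^{p-1}$, and this is false in characteristic $3$. Conjugating $D$ by the involutive automorphism $\Psi$ in the remark after the lemma gives the $\frac12$-derivation $e_1\mapsto -e_0$, $e_0\mapsto e_{-1}$, $e_{-1}\mapsto 0$. It is strictly lowering, so it is not in $\mathrm{span}(D^0,D^1,D^2)$. Equivalently, transporting $\bullet$ by that $\Psi$ gives a structure with unit $e_1$, which is not invertible, so it is not a mutation of $\bullet$. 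Kuznetsov's realization of $\mathcal W(1;n)$ has $\mathcal W(1;1)$ as its tensor factor, so the computation in Theorem~\ref{clasip} no longer covers all candidate $\frac12$-derivations for any $n$ when $p=3$. Reaching $p=3$, $n\geq 2$ this way would need a genuinely new argument.

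For $p>3$ your main route is valid. It has the appeal of never using the explicit form of $\mathrm{Aut}(\mathcal W(1;n))$, but it imports the whole $\frac12$-derivation classification of \cite{z10}.

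Your fallback is exactly the paper's proof. By \cite[Theorem 7.3.1]{s04}, every $\Psi$ has the form $D\mapsto\sigma D\sigma^{-1}$ with $\sigma\in\mathrm{Aut}(\mathcal O(1;n))$ when $(p,n)\neq(3,1)$, and a direct computation finishes. It works uniformly over the whole range, so it should be your proof rather than a backup.

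There is one slip to fix. Evaluating $\Psi(f\partial)=F\partial$ on $\sigma(x)$ gives $F\,\partial(\sigma(x))=\sigma(f)$, so $w$ must be $\partial(\sigma(x))$, not $\sigma(\partial(x))=\sigma(1)=1$. You also need not assume that $w$ is invertible, nor that $\sigma$ is admissible. Taking $f=1$ gives $F\,w=1$, which is precisely the invertibility of $\Psi(e_{-1})=w^{-1}\partial$.
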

\begin{proof}
    Let $\Psi\in \textrm{Aut}(\mathcal W(1;n))$. If $(p, n)\neq (3, 1)$, then there is an  automorphism $\varphi\in \textrm{Aut}(\mathcal{O}(1;n))$  such that $\Psi(D) = \varphi D \varphi^{-1}$ for $D\in \mathcal W(1;n)$, by     \cite[Theorem 7.3.1]{s04}. Also, we have
    $\Psi(e_{-1}) = h \partial$ where $h = \widetilde{\Psi(e_{-1})}\in \mathcal{O}(1;n)$. Recall $e_{-1} = \partial$. It follows 
    $$h \partial (\varphi(x))  = \Psi(e_{-1})(\varphi(x)) = \varphi e_{-1} \varphi^{-1} (\varphi(x)) = \varphi e_{-1} (x) = \varphi(1) = 1.$$
    Hence,  we deduce $h^{-1} = \partial (\varphi(x))$ in $\mathcal{O}(1;n)$.
    In particular, we have
    $$\Psi(e_{-1})\bullet\partial (\varphi(x)) \partial = h \partial \bullet h^{-1} \partial = e_{-1}.$$
    Therefore, the element $\Psi(e_{-1})$ is invertible in $(\mathcal W(1;n),\bullet)$. 
    
    Similarly, given  $f\in \mathcal{O}(1;n)$ we have $\Psi(f \partial) = f' \partial$ for certain $f'\in \mathcal{O}(1;n)$. Then
    $$f' \partial (\varphi(x)) = \Psi(f \partial) (\varphi(x)) = \varphi (f \partial) \varphi^{-1}(\varphi(x)) = \varphi(f).$$
    So $f' = h \varphi(f)$ and $\Psi(f \partial) = h \varphi(f) \partial$. From here, we obtain 
    $$\Psi(f\partial \bullet g \partial) = \Psi(fg \partial) = h\varphi(fg) \partial = h \varphi(f)\varphi(g) \partial$$
    and  $$\Psi(e_{-1})^{-1}\bullet \Psi(f\partial)\bullet \Psi(g\partial) = h^{-1}\partial\bullet h \varphi(f) \partial\bullet h \varphi(g) \partial = h \varphi(f)\varphi(g) \partial.$$
    Thus $\Psi(f\partial \bullet g \partial) = \Psi(e_{-1})^{-1}\bullet \Psi(f\partial)\bullet \Psi(g\partial)$, as in the statement.   
\end{proof}

\begin{remark}
If $(p,n)=(3,1)$, then $\mathcal W(1;1)\cong \mathfrak{sl}_2$, and the previous result does not hold for every automorphism. Indeed, the bracket is given by
$$
[e_0,e_{-1}]=-e_{-1},\qquad [e_{-1},e_1]=e_0,\qquad [e_0,e_1]=e_1.
$$
The linear map $\Psi:\mathcal W(1;1)\to \mathcal W(1;1)$ defined by
$$
\Psi(e_{-1})=e_1,\qquad \Psi(e_0)=-e_0,\qquad \Psi(e_1)=e_{-1}
$$
is a Lie algebra automorphism. However, $\Psi(e_{-1})=e_1$ is not invertible in the algebra $(\mathcal W(1;1),\bullet)$, so the statement fails in this case.
\end{remark}

\begin{proposition}\label{isoq}
Let $\mathbb{F}$ be an algebraically closed field of characteristic $p>2$ and let $n\in \mathbb{N}$. Assume $(p,n)\neq (3,1)$. Given $q,q'\in \mathcal{W}(1;n)$, the algebras $\mathcal{W}_n(q)$ and $\mathcal{W}_n(q')$ are isomorphic if and only if 
there exists $\Psi\in \textrm{Aut}(\mathcal W(1;n))$ such that 
$$q'=\Psi(e_{-1})^{-2}\bullet \Psi(q). $$
where the power $\Psi(e_{-1})^{-2}$ is taken in $(\mathcal W(1;n),\bullet)$.
\end{proposition}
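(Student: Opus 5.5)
An isomorphism $\mathcal{W}_n(q)\to\mathcal{W}_n(q')$ of transposed Poisson algebras is exactly a linear bijection $\Psi$ that is simultaneously a Lie automorphism of $\mathcal W(1;n)$ and satisfies $\Psi(x\bullet q\bullet y)=\Psi(x)\bullet q'\bullet\Psi(y)$ for all $x,y$. The plan is therefore to fix an arbitrary $\Psi\in\textrm{Aut}(\mathcal W(1;n))$ and rewrite the left-hand side using Lemma~\ref{lemism}. Throughout I write $h=\Psi(e_{-1})$, which is invertible in $(\mathcal W(1;n),\bullet)$ by Lemma~\ref{lemism}.

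Applying Lemma~\ref{lemism} twice (valid because $(p,n)\neq(3,1)$) gives
$$\Psi(x\bullet q\bullet y)=h^{-1}\bullet\Psi(x\bullet q)\bullet\Psi(y)=h^{-2}\bullet\Psi(x)\bullet\Psi(q)\bullet\Psi(y).$$
Thus $\Psi$ is an isomorphism $\mathcal{W}_n(q)\to\mathcal{W}_n(q')$ if and only if
$$\Psi(x)\bullet\bigl(h^{-2}\bullet\Psi(q)-q'\bigr)\bullet\Psi(y)=0\quad\text{for all }x,y.$$

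For sufficiency, if $q'=h^{-2}\bullet\Psi(q)$, the displayed identity holds trivially, so $\Psi$ itself is the required isomorphism.

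For necessity, suppose some $\Psi$ is an isomorphism, so the identity holds for all $x,y$. Since $\Psi$ is surjective, I choose $x=y=\Psi^{-1}(e_{-1})$, so that $\Psi(x)=\Psi(y)=e_{-1}$, the unit of $(\mathcal W(1;n),\bullet)$ by Lemma~\ref{isomm}. The identity then gives $h^{-2}\bullet\Psi(q)=q'$ directly.

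There is no real obstacle here. The substantive input, that every Lie automorphism is "$\bullet$-semilinear" with twist $\Psi(e_{-1})^{-1}$, is already Lemma~\ref{lemism}; the remaining points to check are only the associativity and commutativity of $\bullet$ used when collecting the factors, and the observation that an isomorphism of transposed Poisson algebras must in particular be a Lie automorphism, so the lemma applies to it.
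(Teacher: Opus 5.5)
Your proof is correct and is essentially the paper's; the sufficiency direction is the same computation via Lemma~\ref{lemism}. For necessity, the paper simply evaluates $\Psi(q)=\Psi(e_{-1}\bullet_q e_{-1})=\Psi(e_{-1})^{2}\bullet q'$, which needs only the invertibility of $\Psi(e_{-1})$, whereas you apply the twisted multiplicativity twice and then evaluate at $x=y=\Psi^{-1}(e_{-1})$; this is an equally valid minor variant.
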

\begin{proof}
    Assume first that $\Psi:\mathcal{W}_n(q)\longrightarrow \mathcal{W}_n(q')$ is an isomorphism of transposed Poisson algebras. In particular, $\Psi$ is an automorphism of  $\mathcal W(1;n)$, so $\Psi(e_{-1})$ is invertible in $(\mathcal W(1;n),\bullet)$.
    Since
    $$\Psi(q) = \Psi(e_{-1} \bullet_q e_{-1}) = \Psi(e_{-1}) \bullet_{q'} \Psi(e_{-1}) = \Psi(e_{-1})^2 \bullet q',$$ 
    then we have $q'=\Psi(e_{-1})^{-2}\bullet \Psi(q).$

    Conversely, suppose there exists $\Psi\in \textrm{Aut}(\mathcal W(1;n))$ such that $q'=\Psi(e_{-1})^{-2}\bullet \Psi(q)$. Then
    $$\Psi(x \bullet_{q} y)=\Psi(x \bullet q \bullet y) = \Psi(e_{-1})^{-2}\bullet \Psi(x)\bullet \Psi(q)\bullet \Psi(y) = \Psi(x)\bullet q'\bullet \Psi(y) = \Psi(x) \bullet_{q'}\Psi(y).$$
    Hence, the map $\Psi$ is an isomorphism between $ \mathcal{W}_n(q)$ and $\mathcal{W}_n(q')$.
\end{proof}

Recall that an automorphism $\varphi$ of $\mathcal{O}(1;n)$ is called {\it admissible} if $\varphi(f^{(i)})= \varphi(f)^{(i)}$ for every non invertible $f\in \mathcal{O}(1;n)$ and $i\geq 0$. {Note that here the powers $f^{(i)}$ are given by a system of divided powers, see \cite[Definition 2.1.1]{s04}}.
Equivalently, if we assume the base field has characteristic $p>3$, $\varphi$ is admissible if and only if $\varphi(x^{(p^i)})= \varphi(x)^{(p^i)}$ for every $1 \leq i \leq n-1$, by \cite[Lemma 8]{w71}. Moreover, setting $\varphi(x)=\sum_{i=1}^{p^{n}-1} \alpha_i x^{(i)}$ with $\alpha_1\neq 0$ and $\alpha_{p^j} =0$ for $1\leq j \leq n-1$, the map $\varphi$ extends to an admissible automorphism, see \cite{w71}. %Furthermore, any admissible automorphism is of this form.

\medskip

Now, fixed $p,n >0$, for each $m$ with $0\leq m \leq p^n-1$, we introduce the set
$$E_m=
\left\{
s>0 \textrm{ with } m+s\leq p^n-1 : \ s=p^j - 1 \text{ for some } j \text{ or }\binom{m+s+1}{m}\equiv 0 \pmod p
\right\}.$$
We show that, up to isomorphism, the element $q$ can be simplified to its leading term together with the terms whose indices lie in some set $E_m$.

\begin{theorem}\label{normalform}
Let $\mathbb{F}$ be an algebraically closed field of characteristic $p>3$ and let $n\in \mathbb{N}$. Given a non-zero $q\in \mathcal{W}(1;n)$, set $m = \nu(q)+1$. Then
$$
\mathcal{W}_n(q)\cong \mathcal{W}_n\!\left(e_{\nu(q)}+\sum_{i\in E_m} a_i e_{\nu(q)+i}\right),
$$
for certain scalars $a_i\in \mathbb{F}$ with $i\in E_m$.
\end{theorem}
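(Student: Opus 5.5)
Throughout, I work in the picture $\mathcal W(1;n)=\mathcal O(1;n)\partial$ and write $q=\tilde q\partial$ with $\tilde q=\sum_{i\ge m}c_i x^{(i)}$, $c_m\neq 0$, where $m=\nu(q)+1$. By Proposition~\ref{isoq} and the formula $\Psi(f\partial)=h\varphi(f)\partial$ from the proof of Lemma~\ref{lemism}, an automorphism $\Psi$ induced by $\varphi\in\textrm{Aut}(\mathcal O(1;n))$ replaces $\tilde q$ by
$$\tilde q' = h^{-2}\, h\,\varphi(\tilde q)=h^{-1}\varphi(\tilde q),\qquad h^{-1}=\partial(\varphi(x)).$$
So the action on the parameter is $\tilde q\mapsto \partial(\varphi(x))\,\varphi(\tilde q)$. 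This is the transformation law of a ``$-1$-form'' $\tilde q\,(dx)^{-1}$; the twisted power is actually the point here. I will use only admissible automorphisms, those with $\varphi(x)=\sum_i\alpha_i x^{(i)}$, $\alpha_1\neq0$, $\alpha_{p^j}=0$, which exist by \cite{w71}. Scalars also act: $\varphi(x)=\alpha x$ sends $x^{(i)}\mapsto\alpha^i x^{(i)}$, so the leading coefficient becomes $\alpha\cdot\alpha^m c_m=\alpha^{m+1}c_m$. Since $\mathbb F$ is algebraically closed, I choose $\alpha$ to make it $1$. If $m+1\equiv 0\pmod p$ this fails. In that case I would instead normalize using a rescaling of $\mathcal W_n$ itself, which amounts to the fact that $\mathcal W_n(cq)\cong\mathcal W_n(q)$ via $x\mapsto c^{-1}x$ on the associative product. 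So I will first check that scalar multiples of $q$ give isomorphic algebras, using the map $\Psi=\lambda\,\mathrm{id}$ composed appropriately, or else the grading automorphism.

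The main step is an inductive killing of coefficients. I will take $\varphi(x)=x+\beta x^{(s+1)}$ with $s+1$ not a power of $p$, so that $\varphi$ is admissible, and compute the effect modulo terms of degree greater than $m+s$. One has $\varphi(x^{(i)})\equiv x^{(i)}+\beta x^{(i-1)}x^{(s+1)}=x^{(i)}+\beta\binom{i+s}{i-1}x^{(i+s)}$ to first order in $\beta$, and $\partial\varphi(x)=1+\beta x^{(s)}$. Hence
$$\tilde q'\equiv \tilde q+\beta\Big[\binom{m+s}{m-1}+\binom{m+s}{m}\Big]x^{(m+s)}=\tilde q+\beta\binom{m+s+1}{m}x^{(m+s)},$$
while all lower-degree coefficients are unchanged. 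Thus, whenever $\binom{m+s+1}{m}\not\equiv0\pmod p$ and $s+1$ is not a power of $p$ (that is, $s\notin E_m$), I can choose $\beta$ to kill the coefficient of $x^{(m+s)}$. I proceed in increasing $s$, and each step only disturbs higher degrees. After finitely many steps, only the leading term and the terms $x^{(m+s)}$ with $s\in E_m$ survive. Translating back, $x^{(m+s)}\partial=e_{\nu(q)+s}$.

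The main obstacle is making the first-order computation rigorous. I need to check that the higher-order terms in $\beta$ (products like $(x^{(s+1)})^2$ and divided powers) land only in degrees at least $m+2s>m+s$. I also need the exact formula for $\varphi(x^{(i)})=\varphi(x)^{(i)}$ via divided powers of sums, $(a+b)^{(i)}=\sum a^{(i-k)}b^{(k)}$. The restriction $s+1\neq p^j$ is exactly what admissibility forces, which explains the first condition in $E_m$. A secondary point is the leading-coefficient normalization when $p\mid m+1$, which I would handle by the scalar-rescaling isomorphism above.
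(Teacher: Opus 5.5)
Your main argument is the paper's proof. The parameter transforms by $\tilde q\mapsto \partial(\varphi(x))\,\varphi(\tilde q)$. You normalize the leading coefficient with $\varphi(x)=\alpha x$, which multiplies it by $\alpha^{m+1}$. For $s\notin E_m$ you use the admissible substitution $\varphi(x)=x+\beta x^{(s+1)}$, which shifts the coefficient of $x^{(m+s)}$ by $\beta\binom{m+s+1}{m}$ and leaves lower degrees fixed. Your degree bound settles the rigor concern you raise: the terms with $k\ge 2$ in $(x+\beta x^{(s+1)})^{(i)}=\sum_k x^{(i-k)}(\beta x^{(s+1)})^{(k)}$ have degree $i+ks\ge m+2s$.

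The ``secondary point'' is a mistake and should be removed.

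\textbf{Nothing fails when $p\mid m+1$.} The polynomial $t^{m+1}-c_m^{-1}$ is nonconstant, so it has a root in the algebraically closed field $\mathbb F$. Divisibility by $p$ only produces repeated roots; for instance $t^p=c$ is solved by $c^{1/p}$, since $\mathbb F$ is perfect.

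\textbf{Your fallback is false in general.} It is not true that $\mathcal W_n(cq)\cong\mathcal W_n(q)$. The map $\lambda\,\mathrm{id}$ is a Lie automorphism only for $\lambda=1$. Grading automorphisms multiply the coefficient of $x^{(i)}$ by $\alpha^{i+1}$, which is not a uniform rescaling. For a concrete counterexample, take $n=1$ and $q=e_{-1}+ae_{p-2}$ with $a\neq 0$.
\begin{enumerate}
\item The unit of $\mathcal W_1(q)$ is $u=q^{-1}=e_{-1}-ae_{p-2}$.
\item A direct computation in $\mathcal O(1;1)$ gives $u^p=-a\,u$ as derivations, so $(\operatorname{ad}u)^p=-a\operatorname{ad}u$.
\item Isomorphisms send units to units and conjugate $\operatorname{ad}$, so this scalar is an isomorphism invariant.
\item For $cq$ the unit is $c^{-1}u$, and the scalar becomes $-ac^{1-p}$.
\end{enumerate}
Hence $\mathcal W_1(cq)\not\cong\mathcal W_1(q)$ whenever $c^{p-1}\neq 1$.

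Your first normalization already works for every $m$, so dropping the fallback leaves a complete proof.
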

\begin{proof}
    We use the characterization given in Proposition~\ref{isoq}. 
    In order to prove that $\mathcal{W}_n(q)$ is
isomorphic to $\mathcal{W}_n(q')$ with $q'$ as in the statement, we have to prove that there is $\Psi\in \textrm{Aut}(\mathcal W(1;n))$ such that $q'=\Psi(e_{-1})^{-2}\bullet \Psi(q)$. By \cite[Theorem 12.8]{ree}, there is an admissible automorphism $\varphi\in \textrm{Aut}(\mathcal{O}(1;n))$ such that  $\Psi(D) = \varphi D \varphi^{-1}$ for every $D\in \mathcal W(1;n)$.   Therefore, the algebras $\mathcal{W}_n(q)$ and $\mathcal{W}_n(q')$ are isomorphic if there exists an automorphism $\varphi_y\in \textrm{Aut}(\mathcal{O}(1;n))$ such that $\tilde{q'}=\partial(\varphi_y(x))\varphi_y(\tilde{q})$, where $\varphi_y$ is determined by
    $$\varphi_y(x)=y=\sum_{i=1}^{p^{n}-1} \alpha_i x^{(i)} \qquad \textrm{ with } \alpha_1\neq 0 \textrm{ and } \alpha_{p^j} =0 \textrm{ for } 1\leq j \leq n-1.$$ 
    
    Denote $m = \nu(q) + 1$ and $N=p^n$. Then we can write
    $$\tilde q=q_mx^{(m)}+q_{m+1}x^{(m+1)}+\cdots + q_{N-1}x^{(N-1)}
\qquad\text{with }q_m\neq 0.$$
   
    First, we prove that the leading coefficient can be normalized. We can perform the following operation that give us an element corresponding to an isomorphic algebra. Indeed, set $y = \lambda x^{(1)}$ with $\lambda\in\mathbb{F}$ such that $\lambda^{m+1} q_m = 1$. Then it follows
    \begin{equation*}  \partial(\varphi_y(x))\varphi_y(\tilde{q}) = \lambda (\lambda^{m} q_m x^{(m)} + \textrm{terms of degree} > m) = x^{(m)} + \textrm{terms of degree} > m. 
    \end{equation*}
    Hence, we obtain an element with normalized leading term corresponding to an isomorphic algebra. So from here assume without loss of generality that $\tilde{q}$ is normalized, that is $q_m=1$.

    Now, fix $s$ such that $1\leq s\leq N-1-m$ and suppose $s\notin E_m$. We can define $y = x^{(1)} + \lambda x^{(s+1)}$ for $\lambda\in \mathbb{F}$, because $s+1\neq p^j$. Then, if $m>0$, we have
    \begin{equation*}
        \begin{split}        \partial(\varphi_y(x))\varphi_y(\tilde{q}) &= (1 + \lambda x^{{(s)}})(\tilde{q} + \lambda x^{(m-1)}x^{(s+1)} + \textrm{terms of degree} > m+s) \\
        &=\tilde{q} + \binom{m+s}{s+1} \lambda x^{(m+s)} +  \binom{m+s}{s} \lambda x^{(m+s)} + \textrm{terms of degree} > m+s.
        \end{split}
    \end{equation*}
    Since $s\not\in E_m$, we can choose $\lambda$ such that $$q_{m+s} + \binom{m+s+1}{m} \lambda = 0.$$
    This choice eliminates the term corresponding to $x^{(m+s)}$ in the previous formula without changing
the terms of smaller degrees. 
Also, if $m=0$, then 
    \begin{equation*}       \partial(\varphi_y(x))\varphi_y(\tilde{q}) = \tilde{q} + \lambda x^{(s)} + \textrm{terms of degree} > s,
    \end{equation*}
and the elimination follows by setting $\lambda$ such that $\lambda = -q_s$.

Finally, we repeat this elimination process for every $s\notin E_m$, starting
from lower degrees and proceeding to higher degrees. At each step, only the
coefficient of $x^{(m+s)}$ and terms of strictly higher degree are modified, so
terms already eliminated in lower degrees do not reappear.
After finitely many steps, we obtain an element $q'$ in the form of the statement.
\end{proof}

\begin{corollary}
    Let $\mathbb{F}$ be an algebraically closed field of characteristic $p>3$ and let $n\in \mathbb{N}$. Given an invertible $q\in \mathcal{W}(1;n)$, then
$$
\mathcal{W}_n(q)\cong \mathcal{W}_n\!\left(e_{-1}+\sum_{j=1}^{n} a_{j} e_{p^j-2}\right),
$$
for certain scalars $a_1, \ldots, a_n\in \mathbb{F}$.
\end{corollary}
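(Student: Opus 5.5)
The corollary should be the special case $\nu(q)=-1$ of Theorem~\ref{normalform}, so I will first check that invertibility gives $\nu(q)=-1$ and then compute $E_0$ explicitly. By the lemma characterizing invertible elements of $(\mathcal W(1;n),\bullet)$, $q$ invertible means $q_{-1}\neq 0$. Hence $\nu(q)=-1$ and $m=\nu(q)+1=0$.

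Theorem~\ref{normalform} then gives
$$\mathcal W_n(q)\cong \mathcal W_n\Big(e_{-1}+\sum_{i\in E_0} a_i e_{i-1}\Big).$$

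Next I determine $E_0$. It consists of those $s$ with $0<s\leq p^n-1$ such that either $s=p^j-1$ for some $j$, or $\binom{s+1}{0}\equiv 0 \pmod p$. Since $\binom{s+1}{0}=1$, the second condition never holds. So $E_0=\{p^j-1 : 1\leq j,\ p^j-1\leq p^n-1\}$, which is $\{p^j-1: 1\leq j\leq n\}$.

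For $i=p^j-1$ the basis element is $e_{\nu(q)+i}=e_{p^j-2}$. Reindexing $a_{p^j-1}$ as $a_j$ gives exactly the stated form $e_{-1}+\sum_{j=1}^n a_j e_{p^j-2}$.

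The main point to double-check is the boundary index $j=n$, where $e_{p^n-2}$ is the top basis element. Its membership in $E_0$ is consistent with the theorem, since $s=p^n-1$ satisfies $m+s\leq p^n-1$. It also matches the proof of Theorem~\ref{normalform} in the case $m=0$: the elimination there uses $y=x^{(1)}+\lambda x^{(s+1)}$, and for $s+1=p^n$ we have $x^{(p^n)}=0$, so this term genuinely cannot be removed. The same holds for $s+1=p^j$ with $j<n$, because admissibility forces $\alpha_{p^j}=0$. Including these indices in the statement is therefore harmless even if some of the $a_j$ turn out to be zero.
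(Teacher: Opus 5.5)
Your proposal is correct and follows the paper's proof: invertibility gives $q_{-1}\neq 0$, so $\nu(q)=-1$ and $m=0$, and since $\binom{s+1}{0}=1$ you get $E_0=\{p^j-1: 1\leq j\leq n\}$, which gives the terms $e_{p^j-2}$ in Theorem~\ref{normalform}. Your last paragraph is not needed; also, it only shows that the particular substitution $y=x^{(1)}+\lambda x^{(s+1)}$ is unavailable when $s=p^j-1$, not that no automorphism can remove those terms, so ``genuinely cannot be removed'' overstates it.
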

\begin{proof}
    The result follows by applying Theorem~\ref{normalform} for $m=\nu(q) + 1 = 0$ and determining $E_0$.
\end{proof}

\begin{corollary}
Let $\mathbb{F}$ be an algebraically closed field of characteristic $p>3$. Given a non-zero $q\in \mathcal{W}(1;1)$, then
\begin{enumerate}
\item if $\nu(q)<p-2$, there exists $a\in\mathbb{F}$ such that
$$\mathcal W_1(q)\cong \mathcal W_1(e_{\nu(q)}+a e_{p-2});$$
\item if $\nu(q)=p-2$, then
$\mathcal W_1(q)\cong \mathcal W_1(e_{p-2}).$
\end{enumerate}
\end{corollary}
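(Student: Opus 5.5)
The corollary is the case $n=1$ of Theorem~\ref{normalform}, so the plan is to apply that theorem with $N=p$ and $m=\nu(q)+1$, and to compute the exceptional set $E_m$ explicitly. Here $0\le m\le p-1$. By definition, $E_m$ consists of those $s>0$ with $m+s\le p-1$ for which either $s=p^j-1$ or $\binom{m+s+1}{m}\equiv 0 \pmod p$. Once $E_m$ is known, the theorem gives $\mathcal W_1(q)\cong \mathcal W_1\bigl(e_{\nu(q)}+\sum_{i\in E_m}a_ie_{\nu(q)+i}\bigr)$, and the two cases of the statement are read off from it.

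First consider the condition $s=p^j-1$. The case $j=0$ gives $s=0$, which is excluded, and $j\ge 2$ gives $s>p-1$, which is too large. So only $s=p-1$ can occur, and the bound $m+s\le p-1$ then forces $m=0$.

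Next consider the binomial condition. We always have $m+s+1\le p$. If $m+s+1<p$, then $\binom{m+s+1}{m}$ is a product of factors with no factor of $p$, so it is non-zero mod $p$. If $m+s+1=p$, then $\binom{p}{m}\equiv 0 \pmod p$ exactly when $0<m<p$. Hence the binomial condition holds precisely for $s=p-1-m$ with $m\ge 1$, and this $s$ is positive exactly when $m<p-1$.

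Combining the two conditions:
\begin{itemize}
\item If $m=0$, then $E_0=\{p-1\}$.
\item If $1\le m\le p-2$, then $E_m=\{p-1-m\}$.
\item If $m=p-1$, then no $s>0$ satisfies $m+s\le p-1$, so $E_{p-1}=\emptyset$.
\end{itemize}

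In the first two cases, that is when $\nu(q)<p-2$, the unique exceptional index is $\nu(q)+(p-1-m)=p-2$. Theorem~\ref{normalform} then gives $\mathcal W_1(q)\cong\mathcal W_1(e_{\nu(q)}+ae_{p-2})$ for some $a\in\mathbb F$, which is part (i). In the last case, $\nu(q)=p-2$ and the sum is empty, so $\mathcal W_1(q)\cong \mathcal W_1(e_{p-2})$, which is part (ii).

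The only step needing care is the binomial analysis: we must confirm that $\binom{p}{m}$ vanishes mod $p$ exactly for $0<m<p$, and that no smaller top entry produces a factor of $p$. Both follow from the primality of $p$.
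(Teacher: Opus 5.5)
Your proposal is correct and matches the paper's proof: apply Theorem~\ref{normalform} with $n=1$ after computing $E_m=\{p-1-m\}$ for $0\le m\le p-2$ and $E_{p-1}=\emptyset$, so the single surviving index is always $\nu(q)+(p-1-m)=p-2$. You also supply the details the paper calls ``easy'': the condition $s=p^j-1$ contributes only when $m=0$, and $\binom{m+s+1}{m}\equiv 0 \pmod p$ occurs only when $m+s+1=p$ and $0<m<p$.
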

\begin{proof}
    It is easy to show that in this setting     $E_m = \left\{p-m-1\right\}$ for every $m$ such that $0\leq m\leq p-2$ and $E_{p-1}$ is the empty set. Therefore, we obtain the stated classification applying Theorem~\ref{normalform}.
\end{proof}

\section{Irreducible representations of the simple transposed Poisson algebras}\label{sec5}

In this section, we study the irreducible representations of the simple transposed Poisson algebras of the family $\mathcal{W}_n(q)$. To the best of our knowledge, this is the first study of irreducible representations for a concrete family of transposed Poisson algebras.

\subsection{Representations of transposed Poisson algebras}

Let us recall the definition of a representation of transposed Poisson algebras introduced in \cite{bitpa}.

 \begin{definition}
    Let $(\mathcal{P}, \circ, [\cdot, \cdot])$ be a transposed Poisson algebra. A representation of the transposed Poisson algebra $\mathcal{P}$ is a triple $(\alpha, \beta, V)$ such that 
    \begin{enumerate}
        \item $(\alpha, V)$ is a representation of the associative commutative algebra $(\mathcal{P}, \circ)$, 
        \item $(\beta, V)$ is a representation of the Lie algebra $(\mathcal{P}, [\cdot, \cdot])$,
        \item and the following compatibility conditions hold for all $x, y \in \mathcal{P}$
        \begin{equation}\label{repcomp1}
      2\alpha(x)\beta(y) = \beta(x\circ y) + \beta(y)\alpha(x),
        \end{equation}
        \begin{equation}\label{repcomp2}
      2\alpha([x, y]) = \beta(x)\alpha(y) - \beta(y)\alpha(x).
        \end{equation}
    \end{enumerate}
\end{definition}

Due to the rich class of identities satisfied by transposed Poisson algebras (see \cite{tpa} and the first section), we have numerous relations satisfied by its representations.  In the following result, we enumerate some useful ones. The proof is straightforward.

\begin{lemma}\label{lm52}
    Let $(\mathcal{P}, \circ, [\cdot, \cdot])$ be a transposed Poisson algebra and let $(\alpha, \beta, V)$ be a representation of $\mathcal{P}$. Then the following relations hold for all $x, y, z\in \mathcal{P}$.
    \begin{equation}\label{rel1}
            \alpha(x)\beta(y)-\alpha(y)\beta(x) + \alpha([x, y])= 0,
    \end{equation}
%    \begin{equation}\label{rel2}
%         \beta(x)\alpha([y, z]) + \beta(y)\alpha([z, x]) + \beta(z)\alpha([x, y])=0
%    \end{equation}    
    \begin{equation}\label{rel3}
        \beta([x, y])\alpha(z) + \beta([y, z])\alpha(x) + \beta([z, x])\alpha(y) = 0,
    \end{equation}
    \begin{equation}\label{rel3b}
        \beta([x, y])\alpha(z) = \beta(z\circ x)\beta(y) - \beta(z\circ y)\beta(x).
   \end{equation}
%    \begin{equation}\label{rel4}
%        \alpha([y, z])\beta(h) + \alpha([h, y])\beta(z) - \alpha([h, z]) \beta(y)=0
%   \end{equation}
%    \begin{equation}\label{propeq4}
%        [h, x][y, z] + [h, y][z, x] + [h, z][x, y]= 0.
%    \end{equation}
%    \begin{equation}\label{propeq5}
%        [xu, vy] + [xv, uy] = 2 uv[x, y].
%    \end{equation}
%    \begin{equation}\label{propeq6}
%        x[u, yv] + v[xy, u] + yu[v, x] = 0.
%    \end{equation}
\end{lemma}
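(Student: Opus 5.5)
The plan is to derive all three relations directly from the two compatibility conditions (\ref{repcomp1}) and (\ref{repcomp2}), together with three facts: $\beta$ is a Lie representation, $\circ$ is commutative, and the transposed Leibniz rule $2z\circ[x,y]=[z\circ x,y]+[x,z\circ y]$ holds in $\mathcal{P}$. We prove (\ref{rel1}) first, then (\ref{rel3b}), and obtain (\ref{rel3}) as a consequence of (\ref{rel3b}).

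\emph{Relation (\ref{rel1}).} Write (\ref{repcomp1}) twice, once for the pair $(x,y)$ and once for $(y,x)$:
\[
2\alpha(x)\beta(y)=\beta(x\circ y)+\beta(y)\alpha(x), \qquad 2\alpha(y)\beta(x)=\beta(x\circ y)+\beta(x)\alpha(y).
\]
Subtracting, the terms $\beta(x\circ y)$ cancel by commutativity of $\circ$. This leaves $2(\alpha(x)\beta(y)-\alpha(y)\beta(x))=\beta(y)\alpha(x)-\beta(x)\alpha(y)$. By (\ref{repcomp2}) the right-hand side equals $-2\alpha([x,y])$, and dividing by $2$ gives (\ref{rel1}).

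\emph{Relation (\ref{rel3b}).} The idea is to move $\alpha(z)$ to the far left in $\beta(x)\beta(y)\alpha(z)$, using (\ref{repcomp1}) in the form $\beta(y)\alpha(z)=2\alpha(z)\beta(y)-\beta(z\circ y)$. Applying it twice gives
\[
\beta(x)\beta(y)\alpha(z)=4\alpha(z)\beta(x)\beta(y)-2\beta(z\circ x)\beta(y)-\beta(x)\beta(z\circ y).
\]
Next, antisymmetrize in $x,y$. On the left this produces $\beta([x,y])\alpha(z)$. In the term $4\alpha(z)\beta([x,y])$ we substitute (\ref{repcomp1}) once more, namely $2\alpha(z)\beta([x,y])=\beta(z\circ[x,y])+\beta([x,y])\alpha(z)$. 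The terms $\beta(y)\beta(z\circ x)$ and $\beta(x)\beta(z\circ y)$ are reordered using the Lie representation property $\beta(u)\beta(v)=\beta(v)\beta(u)+\beta([u,v])$. After these substitutions the expression becomes
\[
-\beta([x,y])\alpha(z)=2\beta(z\circ[x,y])-\beta([z\circ x,y])-\beta([x,z\circ y])-\beta(z\circ x)\beta(y)+\beta(z\circ y)\beta(x).
\]
The first three terms cancel by the transposed Leibniz rule, which yields exactly (\ref{rel3b}).

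\emph{Relation (\ref{rel3}).} Sum (\ref{rel3b}) over cyclic permutations of $(x,y,z)$. The six resulting terms $\pm\beta(u\circ v)\beta(w)$ cancel in pairs because $\circ$ is commutative, so the cyclic sum is zero.

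The only step needing care is the bookkeeping in (\ref{rel3b}): each swap must be done with the correct version of (\ref{repcomp1}), and the commutator terms must appear with the signs that match the transposed Leibniz rule. Everything else is immediate.
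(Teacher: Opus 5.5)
Your proof is correct. I checked the two substitutions of \eqref{repcomp1} in $\beta(x)\beta(y)\alpha(z)$, the antisymmetrization in $x,y$, the reordering with the Lie representation property, and the cancellation via $2z\circ[x,y]=[z\circ x,y]+[x,z\circ y]$, and all the signs work out; \eqref{rel1} and the cyclic-sum derivation of \eqref{rel3} from \eqref{rel3b} are also fine. The paper only remarks that the proof is straightforward, and your direct manipulation of the two compatibility conditions is exactly the kind of verification it leaves implicit.
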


As usual, a {\it subrepresentation} of a representation $(\alpha, \beta, V)$ of a transposed Poisson algebra is a triple $(\alpha'= \alpha|_{V'}, \beta' = \beta|_{V'}, V')$, with $V'\subseteq V$, such that $\alpha(\mathcal{P})(V') \subseteq V'$ and $\beta(\mathcal{P})(V')\subseteq V'$. A representation of the transposed Poisson algebra is {\it irreducible} if it has no proper subrepresentations. Clearly, if $(\alpha, V)$ is an irreducible representation of an associative commutative algebra or if $(\beta, V)$ is an irreducible representation of Lie algebras, then $(\alpha, \beta, V)$ is an irreducible representation of transposed Poisson algebras. Conversely, the next result holds for transposed Poisson algebras with perfect Lie bracket. Notice that simple transposed Poisson algebras have perfect Lie bracket.

\begin{proposition}\label{irrLie}
    Let $(\mathcal{P}, \circ, [\cdot, \cdot])$ be a transposed Poisson algebra with perfect Lie bracket and let $(\alpha, \beta, V)$ be an irreducible representation of $\mathcal{P}$ with $\beta\neq 0$. Then $(\beta, V)$ is an irreducible representation of $(\mathcal{P}, [\cdot, \cdot])$.
\end{proposition}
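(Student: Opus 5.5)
The idea is to start from an arbitrary nonzero $\beta$-invariant subspace $W\subseteq V$ and build from it a subrepresentation of the full triple $(\alpha,\beta,V)$. Irreducibility of $(\alpha,\beta,V)$ will then force $W=V$. Perfectness of the Lie bracket enters only through Lemma~\ref{lm52}, relation (\ref{rel3b}): there the operators $\beta([x,y])$ appear on the left-hand side. Since $[\mathcal{P},\mathcal{P}]=\mathcal{P}$, the elements $[x,y]$ span $\mathcal{P}$, so information about all $\beta([x,y])$ is information about all of $\beta(\mathcal{P})$.

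\textbf{Step 1: $\beta(\mathcal{P})\alpha(\mathcal{P})W\subseteq W$.} Fix $w\in W$ and $z\in\mathcal{P}$. By (\ref{rel3b}),
$$\beta([x,y])\alpha(z)w=\beta(z\circ x)\beta(y)w-\beta(z\circ y)\beta(x)w .$$
The right-hand side lies in $W$ because $W$ is $\beta$-invariant. Letting $[x,y]$ range over a spanning set of $\mathcal{P}$ gives $\beta(\mathcal{P})\alpha(z)w\subseteq W$.

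\textbf{Step 2: $\alpha(\mathcal{P})\beta(\mathcal{P})W\subseteq W$.} By (\ref{repcomp1}),
$$\alpha(x)\beta(y)w=\tfrac12\beta(x\circ y)w+\tfrac12\beta(y)\alpha(x)w .$$
The first term lies in $W$ by $\beta$-invariance, and the second lies in $W$ by Step 1.

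\textbf{Step 3: $W_0:=\beta(\mathcal{P})W$ is a subrepresentation.} Note that $W_0\subseteq W$. Step 2 shows that $W_0$ is $\alpha$-invariant. It is also $\beta$-invariant, since $\beta(\mathcal{P})W_0\subseteq\beta(\mathcal{P})W=W_0$. By irreducibility, $W_0=0$ or $W_0=V$. If $W_0=V$, then $V=W_0\subseteq W$, so $W=V$ and we are done.

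\textbf{Step 4: the case $W_0=0$.} This is the main obstacle, because here the subrepresentation produced in Step 3 is trivial and gives no information. In this case $\beta(\mathcal{P})W=0$, so the space $V^{\mathcal{P}}=\{v\in V:\beta(\mathcal{P})v=0\}$ is nonzero, as it contains $W$. It is trivially $\beta$-invariant. It is also $\alpha$-invariant: for $v\in V^{\mathcal{P}}$, relation (\ref{rel3b}) gives $\beta([x,y])\alpha(z)v=0$, and perfectness again yields $\beta(\mathcal{P})\alpha(z)v=0$. Thus $V^{\mathcal{P}}$ is a nonzero subrepresentation, so $V^{\mathcal{P}}=V$ by irreducibility. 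That means $\beta=0$, contradicting the hypothesis $\beta\neq 0$.

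Hence every nonzero $\beta$-invariant subspace of $V$ equals $V$, so $(\beta,V)$ is an irreducible representation of $(\mathcal{P},[\cdot,\cdot])$.
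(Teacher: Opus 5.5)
Your strategy works, but Step~3 does not follow from Step~2 as you stated it. Step~2 gives $\alpha(\mathcal{P})W_0=\alpha(\mathcal{P})\beta(\mathcal{P})W\subseteq W$. Being a subrepresentation requires $\alpha(\mathcal{P})W_0\subseteq W_0$, and a priori $W_0=\beta(\mathcal{P})W$ may be a proper subspace of $W$.

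The repair is already in your computations; you recorded too weak a conclusion. In Step~1, each term $\beta(z\circ x)\beta(y)w$ lies in $\beta(\mathcal{P})W=W_0$, because $\beta(y)w\in W$. So in fact $\beta(\mathcal{P})\alpha(\mathcal{P})W\subseteq W_0$. Feeding this into \eqref{repcomp1} in Step~2, both $\beta(x\circ y)w$ and $\beta(y)\alpha(x)w$ lie in $W_0$, hence $\alpha(\mathcal{P})W_0\subseteq W_0$. With this sharpening Step~3 is correct, and Step~4 is fine as it stands.

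Your route also differs from the paper's. The paper first applies \eqref{repcomp1} to $V$ itself: $\beta(\mathcal{P})V$ is a subrepresentation, so $\beta(\mathcal{P})V=V$, and this is where $\beta\neq 0$ enters. For $W=V$ your issue is invisible, since $\alpha(\mathcal{P})V\subseteq V$ trivially. Then, for a proper $\beta$-invariant $W$, the paper pulls $W$ back to a subspace $W'$ that $\beta(\mathcal{P})$ maps onto $W$. It uses \eqref{rel3b} and perfectness to get $\beta(\mathcal{P})\alpha(\mathcal{P})W'\subseteq W$.

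You push $W$ forward to its image instead, and you use $\beta\neq 0$ through the invariants $V^{\mathcal{P}}$ rather than through $\beta(\mathcal{P})V=V$. The two arguments are dual arrangements of the same ingredients. Yours has the advantage of not needing a subspace $W'$ with $\beta(\mathcal{P})W'=W$ exactly, whose existence the paper takes for granted. The pull-back version is most cleanly finished by observing that $\{v\in V:\beta(\mathcal{P})v\subseteq W\}$ is itself a subrepresentation containing $W$ and different from $V$, hence zero.
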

\begin{proof}
    Let $(\alpha, \beta, V)$ be an irreducible representation of $\mathcal{P}$. Then by the relation (\ref{repcomp1}), we have
    $$\alpha(\mathcal{P})\beta(\mathcal{P})(V) \subseteq \beta(\mathcal{P}\circ \mathcal{P})(V) + \beta(\mathcal{P})\alpha(\mathcal{P})(V) \subseteq \beta(\mathcal{P})(V).$$
    This implies that $(\alpha|_{V'}, \beta|_{V'}, V')$,
    where $V'=\beta(\mathcal{P})(V)$, is a subrepresentation of $(\alpha, \beta, V)$. Since $\beta\neq0$, it follows that $\beta(\mathcal{P})(V) = V$. Suppose there is a subspace $W\subseteq V$ with $V\neq W$ such that $\beta(\mathcal{P})(W) \subseteq W$. Denote by $W'$ the maximal subspace of $V$ such that $\beta(\mathcal{P})(W')= W$. Note that $W'\neq V$. Since $[\mathcal{P}, \mathcal{P}] = \mathcal{P}$, we have  by the equation (\ref{rel3b}) that $$\beta(\mathcal{P})\alpha(\mathcal{P})(W') \subseteq \beta([\mathcal{P}, \mathcal{P}])\alpha(\mathcal{P})(W') \subseteq \beta(\mathcal{P}\circ \mathcal{P})\beta(\mathcal{P}) (W') - \beta(\mathcal{P}\circ \mathcal{P})\beta(\mathcal{P}) (W') \subseteq W.$$
    By the relation (\ref{repcomp1}), it follows that 
    $$\alpha(\mathcal{P})(W) = \alpha(\mathcal{P})\beta(\mathcal{P})(W') \subseteq \beta(\mathcal{P}\circ \mathcal{P})(W') + \beta(\mathcal{P})\alpha(\mathcal{P})(W') \subseteq \beta(\mathcal{P})(W') + W = W.$$
    Consequently, $(\alpha|_{W}, \beta|_{W}, W)$ is a subrepresentation which is a contradiction. Hence, $(\beta, V)$ must be irreducible.
\end{proof}

In fact, the representation $\alpha$ of $(\mathcal{P},\circ)$ is prescribed in this case.

\begin{proposition}\label{uab}
Let $(\mathcal{P}, \circ, [\cdot, \cdot])$ be a transposed Poisson algebra with perfect Lie bracket and let $(\beta, V)$ be a representation of the Lie algebra $(\mathcal{P}, [\cdot, \cdot])$. Then there exists at most one representation $(\alpha, V)$ of the associative commutative algebra $(\mathcal{P}, \circ)$ such that $(\alpha, \beta, V)$ is a representation of the transposed Poisson algebra $\mathcal{P}$.
\end{proposition}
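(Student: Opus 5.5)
Suppose $(\alpha_1,\beta,V)$ and $(\alpha_2,\beta,V)$ are both representations of the transposed Poisson algebra $\mathcal{P}$. I will show $\alpha_1=\alpha_2$, i.e. that $\delta=\alpha_1-\alpha_2$ vanishes. The natural tool is relation (\ref{repcomp2}), which expresses $\alpha$ on brackets in terms of $\beta$ and $\alpha$. Perfectness $[\mathcal{P},\mathcal{P}]=\mathcal{P}$ then lets every $\alpha(z)$ be written through brackets.

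First I subtract the two instances of (\ref{repcomp2}) and obtain, for all $x,y\in\mathcal{P}$,
$$2\delta([x,y])=\beta(x)\delta(y)-\beta(y)\delta(x).$$
Subtracting the two instances of (\ref{repcomp1}) gives $2\delta(x)\beta(y)=\beta(y)\delta(x)$, since the term $\beta(x\circ y)$ cancels. Similarly, relation (\ref{rel1}) of Lemma~\ref{lm52}, which holds for any representation, yields $\delta(x)\beta(y)-\delta(y)\beta(x)+\delta([x,y])=0$.

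The key step is to combine these. From the second identity, $\beta(y)\delta(x)=2\delta(x)\beta(y)$. Substituting this into the first gives
$$2\delta([x,y])=2\delta(y)\beta(x)-2\delta(x)\beta(y),$$
that is, $\delta([x,y])=\delta(y)\beta(x)-\delta(x)\beta(y)$. On the other hand, the third identity says $\delta([x,y])=\delta(y)\beta(x)-\delta(x)\beta(y)$, which is the same equation, so this combination gives no new information.

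To get a contradiction-free vanishing, I would use the second identity more strongly. Applying it twice gives $\beta(y)\beta(z)\delta(x)=4\delta(x)\beta(y)\beta(z)$, while $\beta(z)\beta(y)\delta(x)=4\delta(x)\beta(z)\beta(y)$. Subtracting and using that $\beta$ is a Lie representation:
$$\beta([y,z])\delta(x)=4\delta(x)\beta([y,z]).$$
But the second identity applied directly to $[y,z]$ gives $\beta([y,z])\delta(x)=2\delta(x)\beta([y,z])$. Hence $2\delta(x)\beta([y,z])=0$. By perfectness, $\delta(x)\beta(w)=0$ for all $w$ (characteristic $\neq 2$). Consequently $\beta(w)\delta(x)=2\delta(x)\beta(w)=0$ as well.

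Finally, the first identity gives $2\delta([x,y])=0$, and perfectness again yields $\delta=0$, so $\alpha_1=\alpha_2$. The main obstacle is spotting the second-order commutator trick: the first-order relations alone are consistent with each other and do not force vanishing. I expect everything else to be routine, and I would double-check that (\ref{repcomp1}) indeed has $\beta(x\circ y)$ independent of $\alpha$, which is what makes that term cancel.
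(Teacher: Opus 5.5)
Your argument is correct, and it differs from the paper's mainly in form and in which hypotheses it uses.

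\textbf{The paper's route.} The paper never passes to the difference $\alpha_1-\alpha_2$. It combines \eqref{repcomp2} with relation \eqref{rel3b} of Lemma~\ref{lm52} to get the explicit formula \eqref{eqrep}. That formula expresses $2\alpha([[w,x],[y,z]])$ purely in terms of $\beta$, $\circ$ and $[\cdot,\cdot]$, and since $[[\mathcal P,\mathcal P],[\mathcal P,\mathcal P]]=\mathcal P$, it determines $\alpha$.

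\textbf{How the two routes relate.} Your commutator trick is the homogeneous version of one natural derivation of \eqref{rel3b}. That derivation applies \eqref{repcomp1} twice to $\beta(x)\beta(y)\alpha(z)$ and compares the result with a single application to $\beta([x,y])\alpha(z)$. The inhomogeneous terms ($\beta(z\circ x)\beta(y)$, $\beta(z\circ[x,y])$, etc.) are then absorbed using the transposed Leibniz rule. For your difference $\delta$ these terms cancel from the start. So you reach $\delta(x)\beta([y,z])=0$ without invoking the transposed Leibniz identity, associativity of $\circ$, or Lemma~\ref{lm52}, which the paper states without proof.

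\textbf{What each buys.} Your version is self-contained. It proves uniqueness under weaker hypotheses: only \eqref{repcomp1}, \eqref{repcomp2}, $\beta$ being a Lie representation, perfectness and $\textrm{char}\neq 2$. The paper's version gives an explicit reconstruction formula for $\alpha$, which it reuses later, for example to get $\alpha=0$ when $\beta=0$ or when $\circ$ is zero. Both of those consequences also follow from your uniqueness statement, since $\alpha=0$ satisfies the compatibility conditions in each case.

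\textbf{Write-up.} Drop the exploratory paragraph ending in ``no new information'' and the identity coming from \eqref{rel1}; neither is used in the final argument.
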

\begin{proof}
Assume that $\alpha$ and $\alpha'$ are two representations of $(\mathcal P,\circ)$ such that both
$(\alpha,\beta,V)$ and $(\alpha',\beta,V)$ are representations of the transposed Poisson algebra
$\mathcal P$. We show that $\alpha=\alpha'$.
Since the Lie bracket is perfect, it is enough to show that they coincide on every element in 
$[[\mathcal P, \mathcal P],[\mathcal P, \mathcal P]]$.

Using the compatibility relation \eqref{repcomp2}
and equation \eqref{rel3b} for every $w, x, y, z \in \mathcal{P}$ we have 
\begin{equation}
\begin{split}\label{eqrep}
2\alpha([[w,x],[y,z]])
= \,\, & \beta([w,x])\alpha([y,z])-\beta([y,z])\alpha([w,x])\\
= \,\, & \beta([y,z]\circ w)\beta(x)-\beta([y,z]\circ x)\beta(w) \\
 & - \beta([w,x]\circ y)\beta(z)+\beta([w,x]\circ z)\beta(y).
\end{split}
\end{equation}

The right-hand side depends only on $\beta$ and the two multiplications of $\mathcal P$.
Obviously, the same formula holds for $\alpha'$, so we deduce
$$
\alpha([[w,x],[y,z]])=\alpha'([[w,x],[y,z]])
$$

Since such elements span $\mathcal P$, we conclude that $\alpha=\alpha'$.
\end{proof}

\begin{proposition}\label{alfa1}
Let $(\mathcal{P}, \circ, [\cdot, \cdot])$ be a transposed Poisson algebra  with perfect Lie bracket. Let $(\alpha, \beta, V)$ be an irreducible representation of $\mathcal{P}$ with $\beta\neq 0$. If $\mathcal{P}$ is unital, 
then $\alpha(1)=\textrm{Id}_V$. 
\end{proposition}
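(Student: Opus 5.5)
Set $e=\alpha(1)$. Since $1\circ 1=1$ and $\alpha$ is a representation of $(\mathcal P,\circ)$, $e$ is an idempotent operator on $V$, so $V=eV\oplus(1-e)V$. The plan is to show that both summands are invariant under $\alpha(\mathcal P)$ and $\beta(\mathcal P)$. Irreducibility then forces one of them to be zero. If $eV=0$, then $\alpha(1)=0$, and we will show this forces $\beta=0$, contradicting the hypothesis. Hence $(1-e)V=0$, that is, $\alpha(1)=\textrm{Id}_V$.

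Invariance under $\alpha$ is immediate. Commutativity gives $\alpha(x)\alpha(1)=\alpha(x\circ 1)=\alpha(1)\alpha(x)$, so $\alpha(x)$ commutes with $e$.

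For $\beta$, apply \eqref{repcomp1} with $x=1$. It gives $2e\beta(y)=\beta(y)+\beta(y)e$. Multiplying on the left by $e$ and using $e^2=e$ yields $2e\beta(y)=e\beta(y)+e\beta(y)e$, so $e\beta(y)=e\beta(y)e$. Substituting this back gives
$$\beta(y)=2e\beta(y)-\beta(y)e .$$
Right multiplication by $e$ then gives $\beta(y)e=2e\beta(y)e-\beta(y)e$, so $\beta(y)e=e\beta(y)e=e\beta(y)$. Hence $\beta(y)$ commutes with $e$, which is the required invariance. Feeding this back into the displayed identity gives $\beta(y)=2e\beta(y)-e\beta(y)=e\beta(y)$, i.e.\ $\beta(y)=e\beta(y)$. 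This already settles the case $eV=0$: there $e=0$, so $\beta(y)=0$ for all $y$, contradicting $\beta\neq 0$. In fact the argument does not need irreducibility for this step: every $\beta(y)$ has image in $eV$ and kills $(1-e)V$, so $\beta$ vanishes on $(1-e)V$.

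The remaining step is the one I expect to need care. On the invariant summand $W=(1-e)V$ we have $\beta|_W=0$ and $\alpha(1)|_W=0$. Since $x=1\circ x$, we get $\alpha(x)|_W=\alpha(x)\alpha(1)|_W=0$, so $\mathcal P$ acts trivially on $W$. Irreducibility then says either $W=V$, which gives $\beta=0$ and a contradiction, or $W=0$, which gives $e=\textrm{Id}_V$. If irreducibility is understood to allow a one-dimensional trivial module, this case split still rules it out, because there $\beta=0$. The perfectness hypothesis is not needed in this route, though one could alternatively invoke Proposition~\ref{irrLie} together with Schur-type arguments.
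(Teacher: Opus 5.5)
Your proof is correct, but it takes a different route from the paper. The paper's argument has three steps:
\begin{itemize}
\item it applies \eqref{rel3b} with $z=1$ to get $\beta([x,y])\alpha(1)=\beta([x,y])$, and uses perfectness of the bracket to upgrade this to $\beta(x)\alpha(1)=\beta(x)$ for all $x$;
\item it then uses \eqref{repcomp1} with $x=1$ to obtain $\alpha(1)\beta(x)=\beta(x)$;
\item it concludes from $\beta(\mathcal{P})(V)=V$, which it takes from Proposition~\ref{irrLie}.
\end{itemize}

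You instead get both absorption identities $\alpha(1)\beta(y)=\beta(y)=\beta(y)\alpha(1)$ from \eqref{repcomp1} at $x=1$ alone. You do this by multiplying on the left and on the right by the idempotent $e=\alpha(1)$. You then finish with the splitting $V=eV\oplus(1-e)V$, where $(1-e)V$ is a subrepresentation on which $\mathcal{P}$ acts trivially. Your computations check out; the only division is by $2$.

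Your route removes the perfectness hypothesis entirely, and it proves more. For every representation of a unital transposed Poisson algebra, not only irreducible ones, $V$ is the direct sum of a subrepresentation on which $\alpha(1)$ is the identity and a subrepresentation on which $\mathcal{P}$ acts by zero.

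The paper's argument is shorter in context because it reuses \eqref{rel3b} and Proposition~\ref{irrLie}. However, it uses perfectness only in its first step, and your idempotent trick shows that step does not need it. The fact $\beta(\mathcal{P})(V)=V$ used at the end also holds without perfectness, as the first part of the proof of Proposition~\ref{irrLie} shows.
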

\begin{proof}
    By equation \eqref{rel3b}, we have that
    $$\beta([x, y])\alpha(1) = \beta(x)\beta(y) - \beta(y)\beta(x) = \beta([x, y]).$$
    The Lie bracket is perfect, so we deduce $\beta(x)\alpha(1) = \beta(x)$. Also, by equation \eqref{repcomp1}, we obtain
    $$2\alpha(1)\beta(x) = \beta(x) + \beta(x)\alpha(1) = 2\beta(x).$$
    Hence, we have $\alpha(1)\beta(x)=\beta(x)$ for every $x\in \mathcal{P}$. Since $(\beta, V)$ is irreducible by Proposition~\ref{irrLie}, it follows that $\beta(\mathcal{P})(V) = V$ and $\alpha(1) = \textrm{Id}_V$.    
\end{proof}

Notice that if a simple transposed Poisson algebra $(\mathcal{P}, \circ, [\cdot, \cdot])$ is trivial, that is, if one of the two multiplications is zero, then the representation theory simplifies considerably. If the associative multiplication is zero, then by \eqref{eqrep} we obtain $\alpha=0$, so the irreducible representations are precisely the trivial one-dimensional representation and the irreducible representations of the Lie algebra $(\mathcal{P}, [\cdot, \cdot])$. If the Lie bracket is zero, then $\beta=0$, and the only irreducible representation is the one-dimensional trivial one. Therefore, in the current setting, it remains to study the irreducible representations of the non-trivial simple transposed Poisson algebras, namely the algebras $\mathcal{W}_n(q)$.

\subsection{Irreducible representations of $\mathcal{W}_n(q)$}
In this subsection, we determine the irreducible finite-dimensional representations of $\mathcal{W}_n(q)$ when it is unital. 
Recall that, by Lemma~\ref{invuni}, $\mathcal{W}_n(q)$ is unital if and only if $q$ is invertible. Let us assume $V$ is a finite-dimensional vector space. 
Also, denote $\delta = Q_{e_{-1}}$ for the distinguished derivation of $\mathcal{W}_n(e_{-1})$ and recall we write $N = p^n$. 
We have the following consequence of equation \eqref{eqrep}.

\begin{corollary}\label{beta0}
Let $(\alpha,\beta,V)$ be an irreducible representation of
$\mathcal{W}_n(q)$. If $\beta=0$, then $\alpha=0$ and $\textrm{dim}(V)=1$.
\end{corollary}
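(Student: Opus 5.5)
Since $\beta=0$, the right-hand side of \eqref{eqrep} vanishes identically. It is a combination of products $\beta(\cdot)\beta(\cdot)$, so it is zero for all $w,x,y,z\in\mathcal{W}_n(q)$. Hence $\alpha([[w,x],[y,z]])=0$ for all $w,x,y,z$. The Lie bracket of $\mathcal{W}_n(q)$ is that of the simple Lie algebra $\mathcal{W}(1;n)$, which is perfect. Therefore $[[\mathcal P,\mathcal P],[\mathcal P,\mathcal P]]=\mathcal P$, elements of the form $[[w,x],[y,z]]$ span $\mathcal P$, and linearity gives $\alpha=0$. The same conclusion also follows directly from \eqref{repcomp2}: with $\beta=0$ it gives $2\alpha([x,y])=0$, so $\alpha$ vanishes on $[\mathcal P,\mathcal P]=\mathcal P$, using $\mathrm{char}\,\mathbb F\neq 2$. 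I would present this shorter route and mention \eqref{eqrep} as the source.

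With $\alpha=\beta=0$, every subspace $W\subseteq V$ is invariant under $\alpha(\mathcal P)$ and $\beta(\mathcal P)$, so every subspace gives a subrepresentation. Irreducibility means there is no proper nonzero invariant subspace. Any one-dimensional subspace of $V$ is invariant, so it must equal $V$, and hence $\dim V=1$ (assuming, as usual, that irreducible representations are nonzero).

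There is no real obstacle here. The only point to check is perfectness of the bracket. It follows because $\mathcal{W}_n(q)$ has underlying Lie algebra $\mathcal{W}(1;n)$, which is simple; this holds for $p>2$ except the degenerate case $(p,n)=(3,1)$, where $\mathcal{W}(1;1)\cong\mathfrak{sl}_2$ is still simple and hence perfect. Alternatively, one can use the remark before Proposition~\ref{irrLie} that simple transposed Poisson algebras have perfect Lie bracket.
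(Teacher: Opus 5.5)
Your proof is correct and follows the paper's approach: the paper obtains the corollary from \eqref{eqrep} together with perfectness of the bracket of $\mathcal{W}(1;n)$. Your shortcut via \eqref{repcomp2} is the same idea one step earlier, since \eqref{eqrep} is itself derived from \eqref{repcomp2}, and your dimension count (once $\alpha=\beta=0$, every line in $V$ is a subrepresentation) is exactly what is needed.
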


Hence, it remains to study the case $\beta\neq 0$. First, we study the irreducible representations on the model algebra $\mathcal{W}_n(e_{-1})$. We have the following key relations.

\begin{lemma}\label{relsbeta1}
    Let $(\alpha,\beta,V)$ be an irreducible representation of
$\mathcal{W}_n(e_{-1})$. Denote $T=\beta(e_{-1})$. Then for every $x\in \mathcal{W}_n(e_{-1})$, we have
$$\beta(x) = \alpha(x)T - \alpha(\delta(x)) \qquad \textrm{ and } \qquad \beta(x) = T \alpha(x) - 2 \alpha(\delta(x)).$$
Moreover, we have  $\alpha(x)T=T\alpha(x)-\alpha(\delta(x))$. 
\end{lemma}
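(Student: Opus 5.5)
The plan is to obtain all three identities by specializing the two compatibility conditions \eqref{repcomp1} and \eqref{repcomp2} to the unit. First, observe that in $\mathcal{W}_n(e_{-1})$ the associative product is $x\bullet_{e_{-1}} y = x\bullet e_{-1}\bullet y = x\bullet y$. By Lemma~\ref{isomm}, $e_{-1}$ is the unit of $(\mathcal W(1;n),\bullet)$, so $\mathcal{W}_n(e_{-1})$ is unital with unit $1=e_{-1}$. Also, $\delta=Q_{e_{-1}}$, so $[x,e_{-1}]=-\delta(x)$.

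Next, dispose of the degenerate case. If $\beta=0$, then Corollary~\ref{beta0} gives $\alpha=0$, hence $T=0$, and all three identities read $0=0$. So assume $\beta\neq 0$. The Lie algebra $\mathcal W(1;n)$ is simple, hence perfect. Proposition~\ref{alfa1} then applies and yields $\alpha(e_{-1})=\textrm{Id}_V$.

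Now apply \eqref{repcomp2} with $y=e_{-1}$:
$$2\alpha([x,e_{-1}]) = \beta(x)\alpha(e_{-1})-\beta(e_{-1})\alpha(x).$$
Using $[x,e_{-1}]=-\delta(x)$ and $\alpha(e_{-1})=\textrm{Id}_V$, this becomes $-2\alpha(\delta(x))=\beta(x)-T\alpha(x)$. This is exactly the second identity, $\beta(x)=T\alpha(x)-2\alpha(\delta(x))$.

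Then apply \eqref{repcomp1} with $y=e_{-1}$. Since $x\circ e_{-1}=x$, we get
$$2\alpha(x)T=\beta(x)+T\alpha(x).$$
Substituting the second identity gives $2\alpha(x)T=2T\alpha(x)-2\alpha(\delta(x))$. Since $p\neq 2$, we may divide by two to get the third identity, $\alpha(x)T=T\alpha(x)-\alpha(\delta(x))$.

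Finally, insert $T\alpha(x)=\alpha(x)T+\alpha(\delta(x))$ back into the second identity. This gives $\beta(x)=\alpha(x)T+\alpha(\delta(x))-2\alpha(\delta(x))=\alpha(x)T-\alpha(\delta(x))$, which is the first identity.

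The only nontrivial input is the normalization $\alpha(e_{-1})=\textrm{Id}_V$. It is supplied by Proposition~\ref{alfa1}, together with the remark that simplicity of $\mathcal W(1;n)$ gives a perfect bracket. I expect the main care point to be the sign convention $[x,e_{-1}]=-\delta(x)$; everything else is direct substitution.
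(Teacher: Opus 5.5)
Your proof is correct and follows the paper's route: you normalize $\alpha(e_{-1})=\mathrm{Id}_V$ via Proposition~\ref{alfa1} and specialize the compatibility relations at the unit $e_{-1}$. The paper instead cites relation \eqref{rel1} of Lemma~\ref{lm52}, which is the difference of two instances of \eqref{repcomp1} combined with \eqref{repcomp2}, so it gives the first identity at once, whereas you derive it from the other two. Your separate treatment of the case $\beta=0$ via Corollary~\ref{beta0} is a small but welcome addition, since Proposition~\ref{alfa1} requires $\beta\neq 0$ and the paper passes over that case silently.
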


\begin{proof}
    The relations follow directly from Proposition~\ref{alfa1} and the relations in Lemma~\ref{lm52}.
\end{proof}

Let $v_0\in V$ be a common eigenvector of the commuting linear maps $\alpha(\mathcal{W}_n(e_{-1}))$, that is, $\alpha(x)(v_0)= \lambda(x) v_0$ for some $\lambda\in \mathcal{W}_n(e_{-1})^*$. Denote $v_k = T^k v_0$ for $k\geq 0$ and consider the vector subspace $V_0$ spanned by the vectors $v_k$.

\begin{lemma}
    Let $(\alpha,\beta,V)$ be an irreducible representation of
$\mathcal{W}_n(e_{-1})$. Then  $V=V_0$.
\end{lemma}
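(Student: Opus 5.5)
The plan is to show that $V_0$ is a subrepresentation; irreducibility then gives $V=V_0$, since $v_0\neq 0$ (a common eigenvector exists because $V$ is finite-dimensional over an algebraically closed field and the operators $\alpha(x)$ commute). By Lemma~\ref{relsbeta1}, every $\beta(x)$ is expressed through $\alpha$ and $T$ via $\beta(x)=T\alpha(x)-2\alpha(\delta(x))$. It therefore suffices to prove that $V_0$ is stable under $T$ and under every $\alpha(x)$. Stability under $T$ is immediate from the definition $v_k=T^kv_0$.

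For stability under $\alpha(x)$, I will prove by induction on $k$ that, for every $x\in\mathcal{W}_n(e_{-1})$, the vector $\alpha(x)v_k$ lies in the span of $v_0,\dots,v_k$. The base case holds because $\alpha(x)v_0=\lambda(x)v_0$. For the inductive step, I use the commutation relation from Lemma~\ref{relsbeta1}, namely $\alpha(x)T=T\alpha(x)-\alpha(\delta(x))$. This gives
$$\alpha(x)v_{k+1}=\alpha(x)Tv_k=T\alpha(x)v_k-\alpha(\delta(x))v_k .$$
By the induction hypothesis, applied both to $x$ and to $\delta(x)$, the vector $\alpha(x)v_k$ lies in $\mathrm{span}(v_0,\dots,v_k)$. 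Hence $T\alpha(x)v_k$ lies in $\mathrm{span}(v_1,\dots,v_{k+1})$, while $\alpha(\delta(x))v_k$ lies in $\mathrm{span}(v_0,\dots,v_k)$. This completes the induction, because the hypothesis is quantified over all $x$ at each level $k$, so applying it to $\delta(x)$ is legitimate.

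Consequently $V_0$ is invariant under $\alpha(\mathcal{W}_n(e_{-1}))$ and under $T$, and hence under $\beta(\mathcal{W}_n(e_{-1}))$ by the formula above. So $(\alpha|_{V_0},\beta|_{V_0},V_0)$ is a nonzero subrepresentation, and irreducibility yields $V=V_0$.

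The only delicate point is setting up the induction so that it ranges over all $x$ simultaneously, so that the term involving $\delta(x)$ is covered. Beyond that, the argument only needs the relations of Lemma~\ref{relsbeta1}; these already use that $\alpha(e_{-1})=\mathrm{Id}_V$, which follows from Proposition~\ref{alfa1} since $\beta\neq0$. The case $\beta=0$ is handled by Corollary~\ref{beta0}: there $\alpha=0$ and $\dim V=1$, so $V=\mathbb{F}v_0=V_0$ trivially.
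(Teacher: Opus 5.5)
Your proof is correct and follows essentially the paper's argument: an induction using $\alpha(x)T=T\alpha(x)-\alpha(\delta(x))$ gives $\alpha$-stability of $V_0$, then $\beta$-stability follows from Lemma~\ref{relsbeta1}, and irreducibility gives $V=V_0$. The differences are cosmetic. You track the finer filtration $\mathrm{span}(v_0,\dots,v_k)$, you use $\beta(x)=T\alpha(x)-2\alpha(\delta(x))$ where the paper uses $\beta(x)=\alpha(x)T-\alpha(\delta(x))$, and you explicitly dispose of the case $\beta=0$, which the paper leaves implicit.
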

\begin{proof}
    First, we show that $V_0$ is $\alpha$-stable. Indeed, we prove that $\alpha(x)(T^kv_0)\in V_0$ for every $x\in \mathcal{W}_n(e_{-1})$ by induction on $k\geq 0$. For $k=0$, the result is clear by the choice of $v_0$. Now, assume that $\alpha(\mathcal{W}_n(e_{-1}))(T^k v_0)\subseteq V_0$. Then, using the relation $\alpha(x)T=T\alpha(x)-\alpha(\delta(x))$, we obtain
    $$\alpha(x)(T^{k+1}v_0)
=\alpha(x)T(T^k v_0)
=T\alpha(x)(T^k v_0)-\alpha(\delta(x))(T^k v_0).$$
By the induction hypothesis, both
$\alpha(x)(T^k v_0)\in V_0$ and $\alpha(\delta(x))(T^k v_0)\in V_0$. Since $TV_0 \subseteq V_0$, it follows $\alpha(x)(T^{k+1}v_0) \in V_0$. Hence, the space $V_0$ is $\alpha$-stable.

    Moreover, observe that $V_0$ is $\beta$-stable. Since 
    $\beta(x)(T^kv_0) = \alpha(x)(T^{k+1}v_0) - \alpha (\delta(x))(T^kv_0)$, by Lemma~\ref{relsbeta1}, and the fact that $V_0$ is $\alpha$-stable, it follows that $\beta(x)(T^kv_0) \in V_0$.

    By the irreducibility of $(\alpha,\beta,V)$ and since $V_0\neq 0$, we conclude that $V=V_0$.
\end{proof}

Let $(\mathcal{L}, [\cdot, \cdot])$ be a Lie algebra over a field $\mathbb{F}$. Recall that a {\it self-centralizing} element $x\in \mathcal{L}$ is one such $\left\{a\in \mathcal{L}: [x, a] = 0\right\} = \mathbb{F} x$ and an {\it ad-nilpotent} element $x\in \mathcal{L}$ is one such that $Q_x$ is nilpotent, see \cite{BIO}. We prove the following auxiliary lemma.

\begin{lemma}\label{selfcent}
    The element $e_{-1}\in \mathcal{W}_n(q)$ is self-centralizing and ad-nilpotent.
\end{lemma}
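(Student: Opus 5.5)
The statement concerns only the Lie bracket of $\mathcal{W}_n(q)$, which is the Zassenhaus bracket (\ref{usual}) on $\mathcal W(1;n)$ for every $q$. I would therefore drop $q$ and compute directly with $Q_{e_{-1}}$ in the realization $\mathcal W(1;n)=\mathcal O(1;n)\partial$ of Remark~\ref{idntifw}. There $e_{-1}=\partial$ and $e_i=x^{(i+1)}\partial$. For $f\partial$ with $f\in\mathcal O(1;n)$, the bracket of vector fields gives
$$[\partial, f\partial]=\partial(f)\,\partial - f\,\partial(1)\,\partial=\partial(f)\partial.$$
On the basis this reads $[e_{-1},e_i]=e_{i-1}$ for $0\le i\le N-2$ and $[e_{-1},e_{-1}]=0$. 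The same identity follows from (\ref{usual}), since $\binom{i}{i}-\binom{i}{-1}=1$. So $Q_{e_{-1}}$ acts on the basis $e_{N-2},\dots,e_{-1}$ as a single shift-down chain.

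\emph{Ad-nilpotency.} Because $Q_{e_{-1}}$ lowers the index by one and kills $e_{-1}$, we get $Q_{e_{-1}}^{k}(e_i)=e_{i-k}$ whenever $i-k\ge -1$, and $0$ otherwise. Hence $Q_{e_{-1}}^{N}=0$. In fact $Q_{e_{-1}}$ is a single nilpotent Jordan block of size $N=p^n$.

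\emph{Self-centralizing.} Take $a=\sum_i a_ie_i$ with $[e_{-1},a]=0$. This gives $\sum_{i\ge0}a_ie_{i-1}=0$. The vectors $e_{i-1}$ for $i\ge 0$ are distinct basis vectors, so $a_i=0$ for all $i\ge 0$ and $a\in\mathbb F e_{-1}$. The reverse inclusion is trivial, so the centralizer of $e_{-1}$ is exactly $\mathbb{F}e_{-1}$. Equivalently, the kernel of a single Jordan block is one-dimensional.

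The only point needing care is that the coefficient of $e_{i-1}$ in $[e_{-1},e_i]$ is exactly $1$ and so never vanishes modulo $p$. This is clear from the divided-power rule $\partial(x^{(i+1)})=x^{(i)}$, which involves no binomial coefficients. I therefore expect no real obstacle: the argument is pure bookkeeping in the divided power basis and is independent of $p$, $n$ and $q$.
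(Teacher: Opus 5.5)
Your proof is correct and takes essentially the same route as the paper. You reduce to the Lie bracket of $\mathcal W(1;n)$, compute $[e_{-1},e_j]=e_{j-1}$ (with $[e_{-1},e_{-1}]=0$), and read off from this shift both that the centralizer is $\mathbb F e_{-1}$ and that $Q_{e_{-1}}$ is nilpotent, a single Jordan block of size $N$, which the paper uses right after the lemma.
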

\begin{proof}
Since the Lie bracket of $\mathcal W_n(q)$ coincides with that of $\mathcal W(1;n)$, it is enough to prove the statement in $\mathcal W(1;n)$. By \eqref{usual}, we have $[e_{-1},e_j]=e_{j-1}$ for $-1\leq j\leq p^n-2$. 
If $x=\sum_{i=-1}^{N-2}\alpha_i e_i$ and $[e_{-1},x]=0$, then
$$
0=[e_{-1},x]=\sum_{i=0}^{p^n-2}\alpha_i e_{i-1},
$$
so $\alpha_i=0$ for all $i\ge 0$. Hence, $e_{-1}$ is self-centralizing. Since $Q_{e_{-1}}(e_j)=e_{j-1}$, the operator $Q_{e_{-1}}$ is nilpotent. Therefore, $e_{-1}$ is ad-nilpotent.
\end{proof}

By Lemma~\ref{selfcent}, we know $e_{-1}$ is a self-centralizing ad-nilpotent element of $\mathcal{W}_n(e_{-1})$.
Hence, $\delta$ has a unique Jordan block and we may choose a basis
$$
f_{-1}=e_{-1},\ f_0,\dots,f_{N-2}
$$
of $\mathcal{W}_n(e_{-1})$ such that $\lambda(f_i) = 0$ and
$\delta(f_i)=f_{i-1} $ for $i\geq 0$. 

\begin{lemma}\label{actionalpha}
For every $0\leq i\leq N-2$ and every $k\geq 0$, we have
$$
\alpha(f_i)(v_k)=(-1)^{i+1}\binom{k}{i+1}v_{k-i-1},
$$
where we use the convention $v_s=0$ if $s<0$.
\end{lemma}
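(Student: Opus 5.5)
We argue by induction on $k$, using the commutation relation of Lemma~\ref{relsbeta1}, namely $\alpha(x)T=T\alpha(x)-\alpha(\delta(x))$ for every $x\in\mathcal{W}_n(e_{-1})$. Together with $\delta(f_i)=f_{i-1}$, this gives for every $i\geq 0$
$$\alpha(f_i)T = T\alpha(f_i) - \alpha(f_{i-1}),$$
where for $i=0$ the term $\alpha(f_{-1})=\alpha(e_{-1})=\textrm{Id}_V$ by Proposition~\ref{alfa1}. This holds because $e_{-1}$ is the unit of $\mathcal{W}_n(e_{-1})$ by Lemma~\ref{isomm}, the mutation by $e_{-1}$ is $\bullet$ itself, and the Lie bracket is perfect since the algebra is simple. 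In particular $\lambda(f_{-1})=1$, consistent with the formula at $i=-1$ (where $(-1)^0\binom{k}{0}v_k=v_k$). It is therefore natural to prove the stronger statement for all $-1\leq i\leq N-2$ simultaneously.

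For the base case $k=0$, we have $\alpha(f_i)v_0=\lambda(f_i)v_0=0$ for $i\geq 0$ by the choice of the basis, which matches $\binom{0}{i+1}=0$. For $f_{-1}$ the base case is $v_0=v_0$.

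For the inductive step, assume the formula holds for $k$ and all $i\geq -1$. Then
$$\alpha(f_i)v_{k+1}=\alpha(f_i)Tv_k = T\alpha(f_i)v_k-\alpha(f_{i-1})v_k = (-1)^{i+1}\binom{k}{i+1}v_{k-i} - (-1)^{i}\binom{k}{i}v_{k-i},$$
where we use $Tv_{k-i-1}=v_{k-i}$. The convention $v_s=0$ for $s<0$ is compatible here, since $\binom{k}{i+1}=0$ exactly when $k-i-1<0$. Combining the two terms gives $(-1)^{i+1}\bigl[\binom{k}{i+1}+\binom{k}{i}\bigr]v_{k-i}=(-1)^{i+1}\binom{k+1}{i+1}v_{k+1-(i+1)}$ by Pascal's rule, which is the claim for $k+1$.

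The only delicate point is ensuring the basis is compatible with $\lambda$. We need both $\lambda(f_i)=0$ for $i\geq 0$ and $\delta(f_i)=f_{i-1}$, and this is taken as given from the preceding construction. If one wanted to justify it, it follows from adjusting each $f_i$ by multiples of $f_{-1}$, which lies in $\ker\delta$, while propagating the chain upward. Apart from this, the argument is routine bookkeeping of signs and of the edge case $i=0$, where Proposition~\ref{alfa1} is essential.
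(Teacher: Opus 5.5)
Your proof is correct and follows the paper's own argument: induction on $k$ using $\alpha(f_i)T=T\alpha(f_i)-\alpha(f_{i-1})$ and then Pascal's rule. Your explicit use of $\alpha(f_{-1})=\textrm{Id}_V$ (Proposition~\ref{alfa1}) for the case $i=0$ spells out a step the paper leaves implicit. One small overstatement: in characteristic $p$ the coefficient $\binom{k}{i+1}$ can vanish even when $k-i-1\geq 0$, so ``exactly when'' should be ``whenever'', and that weaker direction is all your argument uses.
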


\begin{proof}
Proceed by induction on $k\geq 0$.
If $k=0$, then $\alpha(f_i)(v_0)=\lambda(f_i)v_0=0
$ for $i\geq 0$, while
$$
(-1)^{i+1}\binom{0}{i+1}v_{-i-1}=0.
$$
So the equation holds.
Assume now that the formula holds up to $k\geq 0$. Then we obtain
\begin{equation*}
    \begin{split}
        \alpha(f_i)(v_{k+1})
&=\alpha(f_i)(Tv_k)
=T\alpha(f_i)(v_k)-\alpha(f_{i-1})(v_k)\\
&= (-1)^{i+1}\binom{k}{i+1}v_{k-i} - (-1)^i\binom{k}{i}v_{k-i} =
(-1)^{i+1}\binom{k+1}{i+1}v_{k-i},
    \end{split}
\end{equation*}
which completes the induction.
\end{proof}

We now identify the associative action $\alpha$.
Recall that a representation $(\alpha,V)$ of an associative algebra
$(\mathcal A,\circ)$ is called the {\it regular representation} if $V=\mathcal A$ and
$
\alpha(x)=P_x
$
for every $x\in \mathcal A$. 

\begin{proposition}
Let $(\alpha,\beta,V)$ be an irreducible representation of
$\mathcal{W}_n(e_{-1})$. Then $(\alpha,V)$ is isomorphic to the regular representation.
\end{proposition}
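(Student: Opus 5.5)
The plan is to locate inside $V$ an explicit copy of the regular module of $(\mathcal{W}_n(e_{-1}),\bullet)\cong\mathcal O(1;n)$ (Lemma~\ref{isomm}), and then show that this copy is all of $V$. Throughout I use the basis $f_{-1}=e_{-1},f_0,\dots,f_{N-2}$ and the vectors $v_k=T^kv_0$ introduced above. Since $e_{-1}$ is the unit, Proposition~\ref{alfa1} gives $\alpha(f_{-1})=\textrm{Id}_V$. For $i\ge 0$, Lemma~\ref{actionalpha} gives $\alpha(f_i)v_k=(-1)^{i+1}\binom{k}{i+1}v_{k-i-1}$.

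\emph{Step 1: the candidate submodule.} Let $U=\textrm{span}\{v_0,\dots,v_{N-1}\}$. By the formula above, $U$ is $\alpha$-stable.

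First I show that $v_0,\dots,v_{N-1}$ are linearly independent. Suppose $\sum_{k\le K}c_kv_k=0$ with $c_K\neq 0$ and $1\le K\le N-1$. Applying $\alpha(f_{K-1})$ kills every $v_j$ with $j<K$, because $\binom{j}{K}=0$ for $j<K$. This leaves $\pm c_Kv_0=0$, which is a contradiction since $v_0\neq 0$. The case $K=0$ is trivial.

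Next, consider the map $\Theta:\mathcal{W}_n(e_{-1})\to U$, $x\mapsto \alpha(x)v_{N-1}$. It intertwines $P_x$ with $\alpha(x)$ because $\alpha$ is a representation. Moreover $\Theta(f_i)=\pm\binom{N-1}{i+1}v_{N-i-2}$. By Lucas' theorem $\binom{N-1}{j}\equiv(-1)^j\not\equiv 0 \pmod p$ for $0\le j\le N-1$. Hence $\Theta$ sends the basis $\{f_i\}$ to nonzero multiples of the distinct basis vectors $v_{N-2-i}$ of $U$, so $\Theta$ is a linear isomorphism $\mathcal{W}_n(e_{-1})\to U$. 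Therefore $(\alpha|_U,U)$ is isomorphic to the regular representation.

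\emph{Step 2: $V=U$. This is the main obstacle.} Since $V$ is spanned by all the $v_k$, it suffices to show $v_N\in U$; then $T$ maps $U$ into $U$, and inductively every $v_k$ lies in $U$.

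My approach is to show that $T^N$ is a scalar.
\begin{itemize}
\item The relation $\alpha(x)T=T\alpha(x)-\alpha(\delta(x))$ of Lemma~\ref{relsbeta1} says $[T,\alpha(x)]=\alpha(\delta(x))$. Iterating, $\textrm{ad}_T^{\,r}\,\alpha(x)=\alpha(\delta^r(x))$.
\item In characteristic $p$ with $N=p^n$, we have $\textrm{ad}_{T^N}=(\textrm{ad}_T)^N$ on $\textrm{End}(V)$.
\item Also $\delta^N=0$, since $\delta$ is a single nilpotent Jordan block of size $N$ (Lemma~\ref{selfcent}).
\item Hence $T^N$ commutes with every $\alpha(x)$.
\item By Lemma~\ref{relsbeta1}, $\beta(x)=T\alpha(x)-2\alpha(\delta(x))$, so $T^N$ also commutes with every $\beta(x)$.
\end{itemize}
Since $V$ is finite-dimensional, $(\alpha,\beta,V)$ is irreducible and $\mathbb F$ is algebraically closed, Schur's lemma gives $T^N=c\,\textrm{Id}_V$. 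Thus $v_N=cv_0\in U$, and so $V=U$.

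Combining the two steps, $(\alpha,V)$ is isomorphic via $\Theta$ to the regular representation, and $\dim V=N$.

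The delicate point is the identity $\textrm{ad}_{T^N}=(\textrm{ad}_T)^{N}$. It holds because left and right multiplication by $T$ commute, and in characteristic $p$ the $p$-th power of a sum of commuting operators is the sum of their $p$-th powers. The rest consists of checking the binomial non-vanishing via Lucas' theorem.
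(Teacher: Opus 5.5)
Your proof is correct, but it establishes $\dim V=N$ by a different mechanism than the paper.

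The paper's route is as follows. It sets $m=\dim V$ with basis $v_0,\dots,v_{m-1}$. From $v_m\in\textrm{span}(v_0,\dots,v_{m-1})$ and Lemma~\ref{actionalpha} it derives $\binom{m}{i+1}\equiv 0 \pmod p$ for $0\le i\le N-2$, hence $N\mid m$ by Lucas' theorem. It then asserts that $\textrm{span}(v_{m-N},\dots,v_{m-1})$ is a subrepresentation, which forces $m=N$. The final isomorphism $x\mapsto \alpha(x)(v_{N-1})$ is exactly your $\Theta$.

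You instead get independence of $v_0,\dots,v_{N-1}$ directly by applying $\alpha(f_{K-1})$. You then replace the divisibility and invariant-subspace step with a single observation: $[T,\alpha(x)]=\alpha(\delta(x))$, $\textrm{ad}_{T^N}=(\textrm{ad}_T)^N$ in characteristic $p$, and $\delta^N=0$. Together these make $T^N$ commute with all $\alpha(x)$ and $\beta(x)$, so Schur's lemma gives $T^N=c\,\textrm{Id}_V$.

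Your route buys something concrete. As written, the paper verifies stability of $\textrm{span}(v_{m-N},\dots,v_{m-1})$ only under $\alpha(f_i)$ and $\beta(f_i)$ for $i\ge 0$. Under $\beta(f_{-1})=T$, the vector $v_{m-1}$ goes to $v_m$. A priori one only knows $v_m\in\textrm{span}(v_0,\dots,v_{m-1})$; the $\alpha$-relations refine this to $\textrm{span}(v_0,v_N,\dots,v_{m-N})$, but not to $\textrm{span}(v_{m-N},\dots,v_{m-1})$.

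Centrality of $T^N$ is precisely what closes this, since it gives $v_m=c\,v_{m-N}$. In your arrangement it makes the Lucas divisibility step unnecessary altogether. It also yields the extra structural fact that $\beta(e_{-1})^{N}$ acts by a scalar on every irreducible representation of $\mathcal{W}_n(e_{-1})$ with $\beta\neq 0$.

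Like the paper, you implicitly assume $\beta\neq 0$, which Proposition~\ref{alfa1} requires. That is the intended setting of the statement, since for $\beta=0$ the representation is one-dimensional.
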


\begin{proof}
Denote $m=\textrm{dim}(V)$. Since $V=V_0$, we may choose the minimal $m$ such that
$V$ is spanned by $\left\{v_0,\dots,v_{m-1}\right\}$. Then $\left\{v_0,\dots,v_{m-1}\right\}$ is a basis of $V$.
We first prove that $N$ divides $m$. By Lemma~\ref{actionalpha}, for every $0\leq i\leq N-2$ we have
$$
\alpha(f_i)(v_m)=(-1)^{i+1}\binom{m}{i+1}v_{m-i-1}.
$$
Since $v_m\in \textrm{span}(v_0,\dots,v_{m-1})$, the left hand side belongs to
$\textrm{span}(v_0,\dots,v_{m-i-2})$. Therefore
$$
\binom{m}{i+1}\equiv 0 \pmod p
$$
for every $0\leq i\leq N-2$. 
Write $m = \sum_{k=0}^{r} m_k p^k$ in base $p$. Then for $0\leq j\leq n-1$ it follows
$$
m_j = \binom{m_j}{1} \equiv \binom{m}{p^j}\equiv 0 \pmod p,
$$
by Lucas' theorem.
We deduce $N\mid m$.
Moreover, we claim $U=\textrm{span}(v_{m-N},v_{m-N+1},\dots,v_{m-1})$ is $\alpha$-stable and $\beta$-stable. 
Indeed, for $m-N\leq k\leq m-1$ and $0\leq i\leq N-2$, Lemma~\ref{actionalpha} gives
$$
\alpha(f_i)(v_k)=(-1)^{i+1}\binom{k}{i+1}v_{k-i-1}.
$$
If $k-i-1\ge m-N$, then $\alpha(f_i)(v_k)\in U$. Otherwise, $k-i-1<m-N$, so
$i+1>k-(m-N).$

Since $N\mid m$ and $N\mid (m-N)$, Lucas' theorem gives
$$
\binom{k}{i+1}
=
\binom{m-N+(k-(m-N))}{i+1}
\equiv
\binom{k-(m-N)}{i+1}
\equiv 0 \pmod p.
$$
Hence, we have $\alpha(f_i)(v_k)\in U$. Thus, $U$ is $\alpha$-stable.
Now, Lemma~\ref{relsbeta1}  and $\alpha(f_i)(v_m) = 0$ yields
$$
\beta(f_i)(v_k)=\alpha(f_i)(Tv_k)-\alpha(\delta(f_i))(v_k)
=\alpha(f_i)(v_{k+1})-\alpha(f_{i-1})(v_k)\in U,
$$
so $U$ is also $\beta$-stable. Hence, the irreducibility of $(\alpha, \beta, V)$ implies $U=V$ and $m=N$.
\medskip

Now, consider the linear map $\varphi:\mathcal{W}_n(e_{-1})\to V$ given by
$
\varphi(x)=\alpha(x)(v_{N-1}).
$
Since $(\alpha,V)$ is a representation of the associative algebra $(\mathcal{W}_n(e_{-1}),\bullet)$, $\varphi$ is a homomorphism between the regular representation and $(\alpha,V)$.
For $0\leq k\leq N-1$, we have
$$
\varphi(f_{N-k-2})
=
\alpha(f_{N-k-2})(v_{N-1})
=
(-1)^{N-k-1}\binom{N-1}{N-k-1}v_k.
$$
Then there is $t\in \mathbb{F}$ such that $\varphi(tf_{N-k-2}) = v_k$, because we have
$$
\binom{N-1}{N-k-1} = \binom{p^n-1}{p^n-k-1}  \not\equiv 0 \pmod p.
$$
Hence $\varphi$ is surjective. Since both spaces have dimension $N$,  the map $\varphi$ is an isomorphism.
Therefore, $(\alpha,V)$ is isomorphic to the regular representation of $(\mathcal{W}_n(e_{-1}),\bullet)$.
\end{proof}

We assume $(\alpha,V)$ is the regular representation and determine the Lie action.

\begin{proposition}\label{betaclass}
Suppose that $V=\mathcal{W}_n(e_{-1})$ and $\alpha(x)=P_x$ for every $x\in \mathcal{W}_n(e_{-1})$. Then there exists a unique element $a\in \mathcal{W}_n(e_{-1})$ such that
$$
\beta(x)=Q_x+P_{x\bullet a}
$$
for every $x\in \mathcal{W}_n(e_{-1})$.
\end{proposition}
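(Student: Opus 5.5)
The plan is to use Lemma~\ref{relsbeta1}, which expresses $\beta$ entirely through $\alpha$ and the single operator $T=\beta(e_{-1})$. Since $\alpha(x)=P_x$, it gives
$$
\beta(x)=P_xT-P_{\delta(x)} \qquad\text{and}\qquad P_xT=TP_x-P_{\delta(x)}
$$
for all $x$. The second identity says that $[T,P_x]=P_{\delta(x)}$. On the other hand, $\delta=Q_{e_{-1}}$ is a derivation of $(\mathcal{W}_n(e_{-1}),\bullet)$ (the remark after Theorem~\ref{decomp}), so the adjoint action $Q_{e_{-1}}$ satisfies $[Q_{e_{-1}},P_x]=P_{\delta(x)}$ as well.

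The first step is to set $S=T-Q_{e_{-1}}$. Then $S$ commutes with every $P_x$. Since $(\mathcal W_n(e_{-1}),\bullet)$ is unital (Lemma~\ref{isomm}), the centralizer of the regular representation consists of the multiplication operators themselves. Indeed, $S(x)=S(P_x e_{-1})=P_xS(e_{-1})=x\bullet a$ with $a:=S(e_{-1})$, so $S=P_a$ and $T=Q_{e_{-1}}+P_a$.

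The second step is to substitute this into the formula for $\beta$:
$$
\beta(x)=P_x(Q_{e_{-1}}+P_a)-P_{\delta(x)}=P_xQ_{e_{-1}}-P_{\delta(x)}+P_{x\bullet a}.
$$
It remains to verify that $P_xQ_{e_{-1}}-P_{\delta(x)}=Q_x$. This is the identity $[x,y]=x\bullet[e_{-1},y]-[e_{-1},x]\bullet y$ valid in the unital transposed Poisson algebra $\mathcal{W}_n(e_{-1})$, quoted from \cite{tpa} after Theorem~\ref{decomp}. Evaluating at $y$ gives $Q_x(y)=P_x\delta(y)-\delta(x)\bullet y$, which is exactly what is needed. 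This yields $\beta(x)=Q_x+P_{x\bullet a}$.

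For uniqueness, suppose $Q_x+P_{x\bullet a}=Q_x+P_{x\bullet a'}$ for all $x$. Taking $x=e_{-1}$ gives $P_a=P_{a'}$, and applying both sides to the unit gives $a=a'$.

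The main obstacle is the bookkeeping of signs and conventions in Lemma~\ref{relsbeta1}. One must check that the commutator relation for $T$ matches that of $Q_{e_{-1}}$ with the same sign, and not with the opposite sign or a factor of $2$. If the signs turned out mismatched, I would instead compare $T$ with $-Q_{e_{-1}}$ or combine the two expressions for $\beta$ in Lemma~\ref{relsbeta1} to isolate $[T,P_x]$ directly. The remaining steps are formal.
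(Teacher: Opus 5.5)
Your proof is correct and follows essentially the same route as the paper: both combine Lemma~\ref{relsbeta1} with $Q_x=P_x\delta-P_{\delta(x)}$ to see that $T-\delta$ commutes with every $P_x$, hence equals $P_a$ with $a=T(e_{-1})$. The sign check you worried about does work out, since $[T,P_x]=P_{\delta(x)}=[\delta,P_x]$. Your uniqueness argument, taking $x=e_{-1}$ and applying both sides to the unit, is more explicit than the paper's.
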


\begin{proof}
We have
$\beta(x)=P_xT-P_{\delta(x)}$ for every $x\in \mathcal{W}_n(e_{-1})$, by Lemma~\ref{relsbeta1}.
Also, since $\mathcal{W}_n(e_{-1})$ is unital, we have 
$Q_x=P_x\delta -P_{\delta(x)}.$
Therefore, $\beta(x)-Q_x=P_x(T-\delta)$
for every $x\in \mathcal{W}_n(e_{-1})$.
Denote $a = \beta(e_{-1})(e_{-1})$. Since we have $P_xT = T P_x - P_{\delta(x)}$ by Lemma~\ref{relsbeta1}, we obtain
$$(T-\delta)(x)=(T-\delta)P_x(e_{-1})=P_x(a) = P_a(x).$$
Hence, it follows $\beta(x)= Q_x + P_xP_a = Q_x+P_{x\bullet a}$. The uniqueness follows by the choice of $a$. 
\end{proof}

For every $a\in \mathcal{W}_n(e_{-1})$, consider the tuple
$M(a)=\bigl(P,\beta_a,\mathcal{W}_n(e_{-1})\bigr)$
where $\beta_a(x)=Q_x+P_{x\bullet a}.$

\begin{lemma}\label{Mairr}
For every $a\in \mathcal{W}_n(e_{-1})$, the tuple $M(a)$ is an irreducible representation of $\mathcal{W}_n(e_{-1})$.
\end{lemma}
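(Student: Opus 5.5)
The plan is to verify directly that $M(a)$ satisfies the three axioms of a representation, and then to prove irreducibility using the local structure of $(\mathcal{W}_n(e_{-1}),\bullet)\cong\mathcal{O}(1;n)$ (Lemma~\ref{isomm}). Throughout I will use that $\mathcal{W}_n(e_{-1})$ is unital with unit $e_{-1}$. By the results recalled after Theorem~\ref{decomp}, $\delta=Q_{e_{-1}}$ is then a derivation of $\bullet$ and
$$Q_x=P_x\delta-P_{\delta(x)}\qquad\text{for every } x.$$

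First, $(P,\mathcal{W}_n(e_{-1}))$ is the regular representation of the associative commutative algebra, so condition (i) is immediate. For condition (ii), I will compute the commutator $[\beta_a(x),\beta_a(y)]$ term by term. The $Q$-part gives $Q_{[x,y]}$ by the Jacobi identity, and the $P$-part vanishes by commutativity. For the mixed terms, I first check that $[Q_x,P_b]=P_{x\bullet\delta(b)}$; this follows by evaluating $Q_x(b\bullet z)$ using the formula above together with the Leibniz rule for $\delta$. Then
$$[Q_x,P_{y\bullet a}]-[Q_y,P_{x\bullet a}]=P_{x\bullet\delta(y\bullet a)-y\bullet\delta(x\bullet a)}=P_{(x\bullet\delta(y)-y\bullet\delta(x))\bullet a}=P_{[x,y]\bullet a}.$$
Here the terms $x\bullet y\bullet\delta(a)$ cancel, and the last equality uses $[x,y]=x\delta(y)-\delta(x)y$. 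Hence $[\beta_a(x),\beta_a(y)]=\beta_a([x,y])$.

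For the compatibility condition \eqref{repcomp1}, the $Q$-part is exactly the transposed Leibniz rule $2P_xQ_y=Q_{x\bullet y}+Q_yP_x$. The $P$-part reads $2P_{x\bullet y\bullet a}=P_{x\bullet y\bullet a}+P_{y\bullet a}P_x$, which holds by associativity and commutativity. For \eqref{repcomp2}, the $P$-parts $P_{y\bullet a}P_x$ and $P_{x\bullet a}P_y$ cancel. It then remains to show
$$[x,y\bullet z]-[y,x\bullet z]=2[x,y]\bullet z.$$
I will check this by expanding both brackets through $Q_u=P_u\delta-P_{\delta(u)}$ and applying the Leibniz rule; the terms $x\bullet y\bullet\delta(z)$ cancel and exactly $2(x\delta(y)-y\delta(x))\bullet z$ survives. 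This completes the proof that $M(a)$ is a representation.

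For irreducibility, let $W\neq0$ be a subrepresentation. Being $P$-stable, $W$ is a nonzero ideal of the local algebra $(\mathcal{W}_n(e_{-1}),\bullet)\cong\mathcal{O}(1;n)$. The key observation is that $W$ is also $\delta$-stable, since
$$\delta=\beta_a(e_{-1})-P_{e_{-1}\bullet a}=\beta_a(e_{-1})-P_a,$$
and both operators on the right preserve $W$. Next I show that $e_{N-2}\in W$: take any nonzero $w\in W$, and multiply it by a suitable monomial $e_j$ to land in the socle $\mathbb{F}e_{N-2}$. This works because $x^{(i)}x^{(N-1-i)}=\binom{N-1}{i}x^{(N-1)}$ and this binomial coefficient is nonzero mod $p$. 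Finally, since $\delta(e_j)=e_{j-1}$, we get $\delta^k(e_{N-2})=e_{N-2-k}\in W$ for all $k$, hence $W=\mathcal{W}_n(e_{-1})$.

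The only place requiring care is the bookkeeping in \eqref{repcomp2} and in the mixed commutator terms. In particular, I need to make sure that the contributions involving $\delta(a)$ cancel exactly, because otherwise $\beta_a$ would fail to be a Lie representation for general $a$.
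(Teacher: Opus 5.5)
Your proof is correct. The check that $M(a)$ is a representation is the computation the paper leaves as ``a straightforward verification''. Your identities $[Q_x,P_b]=P_{x\bullet\delta(b)}$ and $[x,y\bullet z]-[y,x\bullet z]=2[x,y]\bullet z$ hold, and the $\delta(a)$-terms do cancel.

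For irreducibility you take a genuinely different route from the paper. The paper notes that a subrepresentation $W$ is stable under $Q_x=\beta_a(x)-P_{x\bullet a}$ for \emph{every} $x$. Hence $W$ is an ideal of the transposed Poisson algebra $\mathcal{W}_n(e_{-1})$, and the paper concludes by simplicity, which ultimately rests on the simplicity of the Lie algebra $\mathcal{W}(1;n)$.

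You use only stability under the operators $P_x$ and under the single operator $\delta=\beta_a(e_{-1})-P_a$, and then argue inside $\mathcal{O}(1;n)$:
\begin{enumerate}
\item a nonzero $\bullet$-ideal meets the one-dimensional socle $\mathbb{F}e_{N-2}$, since $\binom{N-1}{i}\not\equiv 0 \pmod p$ (all base-$p$ digits of $N-1$ equal $p-1$);
\item $\delta(e_j)=e_{j-1}$ then carries $e_{N-2}$ down to every basis vector.
\end{enumerate}
In effect you prove that $\mathcal{O}(1;n)$ is $\partial$-simple. This is elementary and self-contained, and it gives a slightly stronger conclusion: invariance under the operators $P_x$ and $\delta$ alone already forces $W=0$ or $W=\mathcal{W}_n(e_{-1})$.

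The paper's argument is shorter and more conceptual. It works unchanged for the analogous tuple on any simple transposed Poisson algebra, but it depends on the external fact that $\mathcal{W}(1;n)$ is simple, which your argument avoids.
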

\begin{proof}
A straightforward verification of the definition shows that $M(a)$ is a representation of $\mathcal{W}_n(e_{-1})$.  Now, let $W\subseteq \mathcal{W}_n(e_{-1})$ be a subrepresentation of $M(a)$. Since $W$ is stable under $P_x$ and $\beta_a(x)$, it is also stable under
$Q_x=\beta_a(x)-P_{x\bullet a}$. 
Hence $W$ is an ideal of the transposed Poisson algebra $\mathcal{W}_n(e_{-1})$, which is a contradiction unless $W=0$ or $W = \mathcal{W}_n(e_{-1})$.
\end{proof}

At this point, we can deduce the classification of irreducible modules on $\mathcal{W}_n(q)$ for $q=e_{-1}$ from the previous results. But, moreover, the next correspondence allows us to obtain the classification for every invertible $q$ in $(\mathcal W(1;n),\bullet)$.

\begin{lemma}\label{twistrep}
Let $q\in \mathcal W(1;n)$ be invertible in $(\mathcal W(1;n),\bullet)$, and write
$u=q^{-1}$. Let $(\alpha,\beta,V)$ be a representation of the transposed Poisson algebra
$\mathcal W_n(q)$.
Define
$$
\widetilde{\alpha}(x)=\alpha(x\bullet u),\qquad x\in \mathcal W(1;n).
$$
Then $(\widetilde{\alpha},\beta,V)$ is a representation of the transposed Poisson algebra
$\mathcal W_n(e_{-1})$.

Conversely, if $(\widetilde{\alpha},\beta,V)$ is a representation of $\mathcal W_n(e_{-1})$, then
$$
\alpha_q(x)=\widetilde{\alpha}(x\bullet q),\qquad x\in \mathcal W(1;n),
$$
defines a representation $(\alpha_q,\beta,V)$ of $\mathcal W_n(q)$.

Moreover, irreducibility is preserved by this correspondence.
\end{lemma}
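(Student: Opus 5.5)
The plan is to check the definition of a representation directly: the four conditions are associativity of the $\circ$-action, the Lie condition on $\beta$, and the two compatibilities \eqref{repcomp1} and \eqref{repcomp2}. Throughout, $\beta$ is unchanged and the Lie bracket of $\mathcal W_n(q)$ and $\mathcal W_n(e_{-1})$ is the same bracket \eqref{usual}. So the Lie condition needs no work, and only the three conditions involving $\widetilde\alpha$ must be verified.

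The associative part is immediate. Write $x\bullet_q y=x\bullet q\bullet y$. Then
$$\widetilde\alpha(x\bullet y)=\alpha(x\bullet y\bullet u)=\alpha\bigl((x\bullet u)\bullet q\bullet(y\bullet u)\bigr)=\alpha(x\bullet u)\alpha(y\bullet u)=\widetilde\alpha(x)\widetilde\alpha(y),$$
using $u\bullet q=e_{-1}$ and the fact that $\alpha$ is multiplicative for $\bullet_q$. For \eqref{repcomp1}, apply the original compatibility to the pair $(x\bullet u,y)$. This gives
$$2\alpha(x\bullet u)\beta(y)=\beta\bigl((x\bullet u)\bullet_q y\bigr)+\beta(y)\alpha(x\bullet u),$$
and $(x\bullet u)\bullet q\bullet y=x\bullet y$, which is exactly \eqref{repcomp1} for $\mathcal W_n(e_{-1})$.

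The main step is \eqref{repcomp2}. We need
$$2\alpha([x,y]\bullet u)=\beta(x)\alpha(y\bullet u)-\beta(y)\alpha(x\bullet u),$$
while the hypothesis only gives $2\alpha([a,b])=\beta(a)\alpha(b)-\beta(b)\alpha(a)$. Here I would not substitute directly. Instead, I would use the equivalent form \eqref{rel1},
$$\alpha(a)\beta(b)-\alpha(b)\beta(a)+\alpha([a,b])=0,$$
together with \eqref{repcomp1}; these two determine $\alpha([a,b])$ in a way adapted to the product. The plan is to expand $\beta(x)\alpha(y\bullet u)$ using \eqref{repcomp1} for $\mathcal W_n(q)$, which gives $\beta(x)\alpha(c)=2\alpha(c)\beta(x)-\beta(c\bullet q\bullet x)$. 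This reduces the right-hand side to
$$2\bigl(\alpha(y\bullet u)\beta(x)-\alpha(x\bullet u)\beta(y)\bigr)-\beta(y\bullet x)+\beta(x\bullet y),$$
where the last two terms cancel by commutativity. Next apply \eqref{rel1} in $\mathcal W_n(q)$ to the pair $(x\bullet u,y)$, and then \eqref{repcomp1} again, so that everything is expressed through $\alpha([x\bullet u,y])$ and $\alpha([y\bullet u,x])$.

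What remains is an algebra identity in $\mathcal W_n(q)$, roughly
$$[x\bullet u,y]+[x,y\bullet u]=2\,[x,y]\bullet u \quad\text{up to terms killed by }\alpha.$$
This is the transposed Leibniz rule in $\mathcal W_n(q)$ with multiplier $u\bullet u$, because $u\bullet_q(\cdot)=u\bullet q\bullet u\bullet(\cdot)$. The difficulty is a mismatch: the $\bullet_q$-multiplier is $u\bullet u\bullet q=u$, which appears with one factor $q$ too many. So the key choice is to work with the transposed Leibniz rule of $\mathcal W_n(e_{-1})$ itself, with multiplier $u$: $2u\bullet[x,y]=[u\bullet x,y]+[x,u\bullet y]$. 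This identity holds because $(\mathcal W(1;n),\bullet,[\cdot,\cdot])$ is the model transposed Poisson algebra. With it the two sides should match. I expect this bookkeeping step to be the main obstacle, and I will check it first on the basis elements $e_i$.

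The converse is the same computation with the roles of $q$ and $u$ exchanged: $\alpha_q(x)=\widetilde\alpha(x\bullet q)$, and the maps $\alpha\mapsto\widetilde\alpha$ and $\widetilde\alpha\mapsto\alpha_q$ are mutually inverse, since $u\bullet q=e_{-1}$. For irreducibility, multiplication by the invertible element $u$ is bijective on $\mathcal W(1;n)$, so $\widetilde\alpha(\mathcal W(1;n))=\alpha(\mathcal W(1;n))$ as sets of operators, and $\beta$ is unchanged. Hence both representations have exactly the same invariant subspaces, and irreducibility is preserved.
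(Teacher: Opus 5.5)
Your treatment of the associative condition, of \eqref{repcomp1}, and of irreducibility matches the paper's. Your first reduction for \eqref{repcomp2} is also correct: by \eqref{repcomp1} for $\mathcal W_n(q)$, the required identity is equivalent to
\[
\alpha([x,y]\bullet u)=\alpha(y\bullet u)\beta(x)-\alpha(x\bullet u)\beta(y).
\]
The gap is in what comes next. Suppose you apply \eqref{rel1} to the pairs $(x\bullet u,y)$ and $(y\bullet u,x)$, and then use the (true) identity $[x\bullet u,y]+[x,y\bullet u]=2[x,y]\bullet u$. All you learn is that the displayed identity is equivalent to $\alpha(x)\beta(y\bullet u)-\alpha(y)\beta(x\bullet u)=-\alpha([x,y]\bullet u)$, which is no easier. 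The new operator products $\alpha(x)\beta(y\bullet u)$ and $\alpha(y)\beta(x\bullet u)$ are never turned into values of $\alpha$, and ``up to terms killed by $\alpha$'' names no mechanism that would do so. Checking on basis elements $e_i$ cannot help either. The missing step is an operator identity on an arbitrary module $V$, not an identity inside $\mathcal W(1;n)$. (There is also no multiplier mismatch, since $(u\bullet u)\bullet_q z=u\bullet z$.)

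The missing ingredient is the multiplicativity of $\alpha$. Since $(u\bullet u)\bullet q\bullet z=u\bullet z$, you have $\widetilde\alpha(z)=\alpha(u\bullet u)\,\alpha(z)$ for every $z$. Now \eqref{rel1} for $\mathcal W_n(q)$ reads $\alpha([x,y])=\alpha(y)\beta(x)-\alpha(x)\beta(y)$. Multiplying it on the left by $\alpha(u\bullet u)$ gives the displayed identity immediately, and your first reduction then completes \eqref{repcomp2}. No transposed Leibniz identity in the algebra is needed. The converse works the same way, using $\alpha_q(z)=\widetilde\alpha(q)\,\widetilde\alpha(z)$ and \eqref{rel1} for $\mathcal W_n(e_{-1})$. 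Note that the paper's proof states $2\alpha([x,y]\bullet u)=\beta(x)\alpha(y\bullet u)-\beta(y)\alpha(x\bullet u)$ in one line without comment. You were right that this is not a direct substitution; the left multiplication by $\alpha(u\bullet u)$ is what justifies it.
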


\begin{proof}
Suppose $(\alpha,\beta,V)$ is a representation of $\mathcal W_n(q)$ and set
$\widetilde{\alpha}(x)=\alpha(x\bullet u)$.
First, notice that 
$$
\widetilde{\alpha}(x)\widetilde{\alpha}(y)
=\alpha(x\bullet u)\alpha(y\bullet u) = \alpha((x\bullet u) \bullet_q (y\bullet u))
=\alpha(x\bullet y\bullet u)
=\widetilde{\alpha}(x\bullet y).
$$
Hence $(\widetilde{\alpha},V)$ is a representation of the associative algebra
$(\mathcal W(1;n),\bullet)$.

Next, using the compatibility identities for $(\alpha,\beta,V)$, we obtain
$$
2\widetilde{\alpha}(x)\beta(y)
=2\alpha(x\bullet u)\beta(y)
=\beta((x\bullet u)\bullet_q y)+\beta(y)\alpha(x\bullet u)
=\beta(x\bullet y)+\beta(y)\widetilde{\alpha}(x),
$$
$$
2\widetilde{\alpha}([x,y])
=2\alpha([x,y]\bullet u)
=\beta(x)\alpha(y\bullet u)-\beta(y)\alpha(x\bullet u)
=\beta(x)\widetilde{\alpha}(y)-\beta(y)\widetilde{\alpha}(x).
$$
Therefore, $(\widetilde{\alpha},\beta,V)$ is a representation of $\mathcal W_n(e_{-1})$.

The converse is proved in the same way. Finally, since the map
$x\mapsto x\bullet u$ is invertible, we have
$\widetilde{\alpha}(\mathcal W(1;n))=\alpha(\mathcal W(1;n)),$
so the invariant subspaces for $(\alpha,\beta,V)$ and $(\widetilde{\alpha},\beta,V)$ coincide.
Hence, the irreducibility is preserved.
\end{proof}

For every invertible $q\in \mathcal W(1;n)$ and every $a\in \mathcal W(1;n)$, denote by
$
M_q(a)=\bigl(\alpha_q,\beta_a,\mathcal W(1;n)\bigr)
$
the representation given by
$$
\alpha_q(x)=P_{x\bullet q}
\qquad\text{and}\qquad
\beta_a(x)=Q_x+P_{x\bullet a}.
$$
By Lemmas~\ref{twistrep} and \ref{Mairr}, each $M_q(a)$ is an irreducible representation of $\mathcal W_n(q)$.

\medskip

The arguments above prove the following classification result.

\begin{theorem}\label{classificationunitalreps2}
Let $\mathbb F$ be an algebraically closed field of characteristic $p>2$, let $n\in \mathbb N$, and let
$q\in \mathcal W(1;n)$ be invertible. Then every finite-dimensional irreducible representation of
$\mathcal{W}_n(q)$ is
\begin{enumerate}
    \item the one-dimensional trivial representation, or
    \item a representation of the form {$M_q(a)$ for some $a\in \mathcal W(1;n)$}.
\end{enumerate}
Moreover, every representation {$M_q(a)$} has dimension $p^n$.
\end{theorem}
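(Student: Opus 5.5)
The plan is to assemble the classification from the results of this section. Reduce to the model algebra $\mathcal W_n(e_{-1})$, classify there, and transport back along the correspondence of Lemma~\ref{twistrep}.

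Let $(\alpha,\beta,V)$ be a finite-dimensional irreducible representation of $\mathcal W_n(q)$ with $q$ invertible, and put $u=q^{-1}$. By Lemma~\ref{twistrep}, the triple $(\widetilde\alpha,\beta,V)$ with $\widetilde\alpha(x)=\alpha(x\bullet u)$ is an irreducible representation of $\mathcal W_n(e_{-1})$.

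First suppose $\beta=0$. Then Corollary~\ref{beta0}, applied to $\mathcal W_n(q)$ (it follows from \eqref{eqrep}, since the Lie bracket is perfect), gives $\alpha=0$ and $\dim V=1$. This is case (i).

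Now suppose $\beta\neq 0$ and work in $\mathcal W_n(e_{-1})$. Take a common eigenvector $v_0$ of the commuting operators $\widetilde\alpha(\mathcal W(1;n))$; it exists because the field is algebraically closed and $V$ is finite-dimensional. The lemma showing $V=V_0$ gives $V=\mathrm{span}\{T^kv_0\}$. The proposition identifying $\widetilde\alpha$ then gives an isomorphism $\varphi\colon\mathcal W(1;n)\to V$ intertwining the regular representation $P$ with $\widetilde\alpha$; in particular $\dim V=N=p^n$.

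Transport $\beta$ along $\varphi$, so that we may assume $V=\mathcal W_n(e_{-1})$ and $\widetilde\alpha=P$. This is legitimate because conjugating $\beta$ by a linear isomorphism preserves all the representation identities. Proposition~\ref{betaclass} then yields a unique $a$ with $\beta(x)=Q_x+P_{x\bullet a}$, so $(\widetilde\alpha,\beta,V)\cong M(a)$.

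Applying the converse direction of Lemma~\ref{twistrep} recovers $\alpha(x)=\widetilde\alpha(x\bullet q)$. This holds because $\widetilde\alpha(x\bullet q)=\alpha(x\bullet q\bullet u)=\alpha(x)$. Therefore $\alpha=\alpha_q$ and $(\alpha,\beta,V)\cong M_q(a)$, which is case (ii).

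For the converse direction of the theorem, each $M_q(a)$ is an irreducible representation by Lemmas~\ref{Mairr} and \ref{twistrep}, and its underlying space is $\mathcal W(1;n)$, of dimension $p^n$. The trivial one-dimensional representation is obviously irreducible.

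The main point needing care is the transfer step, where Lemma~\ref{relsbeta1} is used. It relies on Proposition~\ref{alfa1}, which requires $\beta\ne0$ and the perfectness of the bracket; the bracket of $\mathcal W(1;n)$ is perfect because the algebra is simple for $(p,n)\neq(3,1)$. The case $(p,n)=(3,1)$ is covered as well, since $\mathfrak{sl}_2$ is perfect.

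A second point concerns the basis $f_i$ used in Lemma~\ref{actionalpha}. One must check that the $f_i$ can be chosen with $\lambda(f_i)=0$ for $i\ge0$ and $\delta(f_i)=f_{i-1}$. I would handle this as follows. Since $\lambda$ is a character of $(\mathcal W(1;n),\bullet)$ and $\mathcal W(1;n)$ is local with maximal ideal $\mathrm{span}\{e_0,\dots,e_{N-2}\}$, we get $\lambda(e_i)=0$ for $i\ge0$. So the choice $f_i=e_i$ already works, using $[e_{-1},e_j]=e_{j-1}$.
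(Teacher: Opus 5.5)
Your proposal is correct and is essentially the paper's own proof: the case $\beta=0$ goes through Corollary~\ref{beta0}; otherwise you twist to $\mathcal W_n(e_{-1})$ by Lemma~\ref{twistrep}, identify the result with some $M(a)$ via the regular-representation proposition and Proposition~\ref{betaclass}, and twist back; your remark that $f_i=e_i$ already works, because $\lambda$ is a character of the local algebra $(\mathcal W(1;n),\bullet)$ and so vanishes on $e_0,\dots,e_{N-2}$, is a valid simplification of the paper's choice of basis. One caveat you inherit from the paper: in its proof that $\widetilde\alpha$ is the regular representation, stability of $U=\mathrm{span}(v_{m-N},\dots,v_{m-1})$ under $\beta(e_{-1})=T$ is never justified, since nothing shown there puts $Tv_{m-1}=v_m$ in $U$, and the simplest repair is to note that $T^N$ commutes with every $\widetilde\alpha(x)$ and $\beta(x)$ (because $\binom{N}{k}\equiv 0 \pmod p$ for $0<k<N$ and $\delta^N=0$), so $T^N$ is a scalar by Schur's lemma and $m\le N$.
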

\begin{proof}
Let $(\alpha,\beta,V)$ be a finite-dimensional irreducible representation of $\mathcal W_n(q)$.
If $\beta=0$, then the representation is one-dimensional trivial by Corollary~\ref{beta0}.
Assume now that $\beta\neq 0$. By Lemma~\ref{twistrep}, the triple $(\widetilde{\alpha},\beta,V),$ with $\widetilde{\alpha}(x)=\alpha(x\bullet q^{-1}),$
is an irreducible representation of $\mathcal W_n(e_{-1})$. By the previous propositions, this representation is isomorphic to $M(a)$ for $a\in \mathcal W(1;n)$. Applying Lemma~\ref{twistrep} again, we obtain that $(\alpha,\beta,V)$ is isomorphic to $M_q(a)$.
\end{proof}

This completes the classification in the unital case. The non-unital case remains open. Although the associative multiplication is again obtained by a mutation of $(\mathcal W(1;n),\bullet)$, the arguments above rely essentially on {the invertibility of $q$}. Therefore, the non-unital case requires a separate analysis and should be considered in future work.

\section{Further consequences and remarks}\label{sec6}
In this section, we record some consequences of the classification obtained above and discuss further properties of the family $\mathcal{W}_n(q)$, including its relation with weak-Leibniz algebras, Jordan superalgebras and quasi-Poisson algebras.

\subsection{Simple weak-Leibniz algebras in prime characteristic}

The notion of a weak-Leibniz algebra was introduced in \cite{askar} as a generalization of Leibniz algebras. Namely, a {\it weak-Leibniz} algebra $(\mathcal{L},\cdot)$ is an algebra satisfying the identities for all $x, y, z\in\mathcal{L}$
$$(x\cdot y)\cdot z - (y\cdot x)\cdot z = 2x\cdot (y\cdot z)-2y\cdot (x\cdot z) \textrm{ \quad and \quad} x\cdot (y\cdot z)-x\cdot (z\cdot y) = 2(x\cdot y)\cdot z-2(x\cdot z)\cdot y.$$ 

Weak-Leibniz algebras are a generalization of Leibniz algebras with the following property. Given a transposed Poisson algebra $(\mathcal{P}, \circ, [\cdot, \cdot])$, we can consider a new multiplication $\cdot$ in $\mathcal{P}$, defined by $x\cdot y = x\circ y + [x, y]$ for any $x, y \in \mathcal{P}$. The algebra $(\mathcal{P}, \cdot)$ is called the depolarization of $\mathcal{P}$. Conversely, given a weak-Leibniz algebra $(\mathcal{L}, \cdot)$, we can consider the multiplications $\circ$ and $[\cdot, \cdot ]$ in $\mathcal{L}$, given by $x \circ y = \frac{1}{2}(x\cdot y + y\cdot x)$ and $[x, y]=\frac{1}{2}(x\cdot y - y\cdot x)$, for any $x, y \in \mathcal{L}$. This new algebra $(\mathcal{L}, \circ, [\cdot, \cdot])$ is called the polarization of $\mathcal{L}$.
Dzhumadil'daev  showed that any depolarized transposed Poisson algebra is a weak-Leibniz algebra and that any polarized weak-Leibniz algebra is a transposed Poisson algebra for $p>3$. Note that the simplicity is preserved by polarization and depolarization. This implies that any simple finite-dimensional complex weak-Leibniz algebra is an associative commutative algebra or a Lie algebra \cite[Corollary 17]{simple}. In the case of prime characteristic, the description follows as a consequence of Theorem \ref{clasip}.

\begin{corollary}
    Let $\mathbb{F}$ be algebraically closed and let $\textrm{char}(\mathbb{F}) = p>3$. Then any simple finite-dimensional weak-Leibniz algebra is an associative commutative algebra or a Lie algebra or an algebra isomorphic to the depolarization of one of the algebras $\mathcal{W}_n(q)$ of Theorem~\ref{clasip}.
\end{corollary}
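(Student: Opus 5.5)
The plan is to pass through polarization and invoke Theorem~\ref{clasip}. Let $(\mathcal L,\cdot)$ be a simple finite-dimensional weak-Leibniz algebra over $\mathbb F$ with $\textrm{char}(\mathbb F)=p>3$. Consider its polarization $(\mathcal L,\circ,[\cdot,\cdot])$, where $x\circ y=\frac12(x\cdot y+y\cdot x)$ and $[x,y]=\frac12(x\cdot y-y\cdot x)$; this makes sense because $p\neq 2$. By Dzhumadil'daev's result recalled above, valid for $p>3$, this polarization is a transposed Poisson algebra. Moreover $x\cdot y=x\circ y+[x,y]$, so $(\mathcal L,\cdot)$ is exactly the depolarization of its polarization, and the two constructions are mutually inverse on the underlying vector space.

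Next we check that simplicity transfers. A subspace $I$ is an ideal of $(\mathcal L,\cdot)$ if and only if it is stable under both $x\mapsto a\cdot x$ and $x\mapsto x\cdot a$. This holds if and only if $I$ is stable under $a\circ x$ and $[a,x]$, because each pair of operators is a linear combination of the other (with the factor $\frac12$ invertible). Hence the ideals of the weak-Leibniz algebra and of its polarization coincide, and the polarization is a simple transposed Poisson algebra. One small point to handle: if simplicity is defined to exclude zero products, that condition also transfers, since $\mathcal L\cdot\mathcal L=0$ if and only if both $\circ$ and $[\cdot,\cdot]$ vanish. In any case the zero-product one-dimensional algebra is both associative commutative and Lie, so it is covered by the statement.

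Now we split into cases according to triviality of the polarization. If $[\cdot,\cdot]=0$, then $x\cdot y=x\circ y$ and $(\mathcal L,\cdot)$ is associative commutative. If $\circ=0$, then $x\cdot y=[x,y]$ and $(\mathcal L,\cdot)$ is a Lie algebra. Otherwise the polarization is a simple finite-dimensional non-trivial transposed Poisson algebra over an algebraically closed field of characteristic $p>3$. By Theorem~\ref{clasip} there is then an isomorphism $\Psi$ onto some $\mathcal W_n(q)$. Since $\Psi$ preserves both $\circ$ and $[\cdot,\cdot]$, it also preserves $\circ+[\cdot,\cdot]$. Therefore $(\mathcal L,\cdot)$ is isomorphic to the depolarization of $\mathcal W_n(q)$.

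There is no serious obstacle here. The only delicate point is the use of the external fact that polarization of a weak-Leibniz algebra yields a transposed Poisson algebra, which is exactly where the hypothesis $p>3$ enters. Beyond that, the only thing to verify carefully is the ideal correspondence under the change of multiplications.
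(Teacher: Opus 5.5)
Your proposal is correct and follows the same route as the paper. You polarize, use Dzhumadil'daev's result (where $p>3$ enters), and use the coincidence of ideals to obtain a simple transposed Poisson algebra. You then dispose of the trivial cases directly and apply Theorem~\ref{clasip} in the non-trivial case. You also spell out the details the paper leaves implicit: the ideal correspondence and the transfer of the isomorphism to the depolarization.
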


\subsection{Speciality of the Kantor double of $\mathcal{W}_n(q)$} 
Let $(\mathcal P,\circ,[\cdot,\cdot])$ be a transposed Poisson algebra. The Kantor double (see \cite{kantor92, m92}) of the algebra $\mathcal P$ is the $\mathbb Z_2$-graded vector space
$\mathfrak J(\mathcal P)=\mathcal P\oplus \mathcal P^s$
whose even part is $\mathcal P$ and whose odd part is a copy $\mathcal P^s$ of $\mathcal P$, endowed with the multiplication given, for all $x,y\in \mathcal P$, by
$$
x*y=x\circ y,\qquad x*y^s=(x\circ y)^s,\qquad x^s*y=(x\circ y)^s,\qquad x^s*y^s=[x,y].
$$    
It was shown in \cite{simple} that the Kantor double of a transposed Poisson algebra is a Jordan superalgebra.

Recall that a Jordan superalgebra is called {\it special} if it embeds into the Jordan
superalgebra associated with an associative superalgebra.
Namely, let $(\mathcal{A}=\mathcal{A}_0\oplus \mathcal{A}_1, \bullet)$ be an associative superalgebra. Then $\mathcal{A}^{(+)}=(\mathcal{A}, \cdot)$ is a Jordan superalgebra, where the new product is given for $x, y \in \mathcal{A}_0 \cup \mathcal{A}_1$ by 
$x\cdot y = \frac{1}{2}(x \bullet y + (-1)^{ab}y\bullet x)$. Now, a Jordan superalgebra is called special if it can be embedded in a superalgebra $\mathcal{A}^{(+)}$, for some associative superalgebra $\mathcal{A}$. Otherwise, it is called {\it exceptional}.
We obtain the following result.

\begin{proposition}\label{specialkantorvect}
Let $(\mathcal A,\cdot)$ be an associative commutative algebra over a field of characteristic $p\neq 2$, let $D$ be a derivation of $\mathcal A$, and let $q\in \mathcal A$. Consider on $\mathcal A$ the multiplications
$$
x\circ_q y=xqy \qquad \textrm{and} \qquad [x,y]=xD(y)-D(x)y \qquad \textrm{for any } x,y\in \mathcal{A}.
$$
Then the Kantor double of the algebra $(\mathcal A, \circ_q, [\cdot, \cdot])$ is a special Jordan superalgebra.
\end{proposition}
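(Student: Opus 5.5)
We will construct an explicit embedding of the Kantor double $\mathfrak J=\mathcal A\oplus\mathcal A^s$ into $\mathcal B^{(+)}$ for a suitable associative superalgebra $\mathcal B$. The natural candidate is the superalgebra of differential-type operators on a superspace built from $\mathcal A$. Take the associative algebra $\mathcal E=\textrm{End}(\mathcal A\oplus \mathcal A)$, or more economically the Weyl-type algebra generated by the multiplication operators $L_a$ ($a\in\mathcal A$) and $D$. Grade it as $M_{1|1}(\mathcal E_0)$, i.e.\ $2\times 2$ matrices whose diagonal entries are even and off-diagonal entries are odd.

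We first treat $q$ invertible, or even unital $\mathcal A$ with $q=1$. Here we mimic the standard embedding of the Kantor double of a bracket of vector type $\{x,y\}=xD(y)-D(x)y$. Send the even element $x$ to the diagonal matrix $\textrm{diag}(L_x,L_x)$. Send the odd element $x^s$ to an off-diagonal matrix
$$\begin{pmatrix}0& L_x\\ L_xD+cD(x)\cdot\ & 0\end{pmatrix},$$
with a constant $c$ to be determined. Computing the super-Jordan products $\frac12(X Y+(-1)^{|X||Y|}YX)$ will then show three things. The even--even product gives $L_{xy}$. The even--odd product gives the image of $(xy)^s$; this needs $L_xD+\frac12D(x)$-type corrections, which fix $c=\frac12$ (possible since $p\neq 2$). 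The odd--odd anticommutator $\frac12(XY-YX)$ produces the diagonal operator corresponding to $xD(y)-D(x)y$.

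For general $q$ we insert $q$ suitably. Replace the off-diagonal entries by $L_{x}$ and $L_{qx}D+\ldots$, or equivalently rescale one block by $L_q$, so that even--even products pick up one factor of $q$. A cleaner option is to realise $\circ_q$ as $\textrm{diag}(L_{qx},L_{qx})$ acting on a submodule. The check reduces to the identities $D(qxy)$ versus $qD(x)y$ and so on, using only the Leibniz rule for $D$ and commutativity. If $q$ is not invertible, injectivity of the map may fail when the operators are realised on $\mathcal A$ alone. To fix this, we first adjoin a unit and a free variable: embed $\mathcal A$ into $\mathcal A'=\mathcal A^\#[t]$ and extend $D$ by $D(t)=0$. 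The mutation structure then embeds, and injectivity can be read off by applying the operators to $1$.

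The main obstacle is choosing the correct off-diagonal odd operators so that the odd--odd super-anticommutator reproduces exactly $[x,y]$ while the mixed products reproduce $(x\circ_q y)^s$ with the $q$-twist. I will first solve this for $q=1$ by matching coefficients, and then transport the result via the scaling $x\mapsto qx$ on the odd part, checking whether the three product rules survive.
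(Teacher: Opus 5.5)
\textbf{There is a genuine gap: the construction only embeds the Kantor double when $q$ is not a zero divisor in $\mathcal A$, and the proposed repair does not help.}

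\textbf{What your scheme actually proves.} Made precise, it reads as follows. On $\mathcal A^\#$ (unit adjoined, $D(1)=0$) send $x\mapsto \textrm{diag}(L_{qx},L_{qx})$ and
\[
x^s\mapsto 2\begin{pmatrix}0 & L_x\\ L_{qx}D+\tfrac12 L_{qD(x)} & 0\end{pmatrix}.
\]
The factor $2$ is needed: with $c=\tfrac12$ your odd--odd computation gives only $\tfrac14[x,y]$. This is a homomorphism $\mathfrak J(\mathcal A,\circ_q,[\cdot,\cdot])\to M_{1|1}(\textrm{End}\,\mathcal A^\#)^{(+)}$ for every $q$. It is the standard embedding of the vector-type double for the derivation $qD$, composed with the homomorphism $x\mapsto qx$, $x^s\mapsto x^s$.

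\textbf{Where it fails.} The kernel of this map is $\{x\in\mathcal A:\ qx=0\}$, sitting in the even part. Your repair does not touch this. In $\mathcal A^\#[t]$ one still has $qx=0$, and ``applying the operators to $1$'' returns $qx$, not $x$. So you obtain the proposition exactly when $\mathrm{Ann}_{\mathcal A}(q)=0$. Since $\mathcal O(1;n)$ is local with nilpotent maximal ideal, every non-invertible $q$ there is a zero divisor. Hence you cover only the unital $\mathcal W_n(q)$, not the non-unital ones that the subsequent corollary is about.

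\textbf{Why enlarging $\mathcal A$ cannot close the gap.} No argument that first embeds $(\mathcal A,\circ_q,[\cdot,\cdot])$ into a vector-type algebra $(\mathcal C,\cdot,[\cdot,\cdot]_\delta)$ can work. This is what enlarging $\mathcal A$ and re-inserting $q$ aims at. Take $\mathcal W_1(e_{p-2})$, i.e.\ $\mathcal A=\mathcal O(1;1)$, $q=x^{(p-1)}$, and suppose $f$ is an injective transposed Poisson homomorphism into such a $(\mathcal C,\cdot,[\cdot,\cdot]_\delta)$.
\begin{enumerate}
\item Set $e=f(1)$. Then $e^2=f(x^{(p-1)})$, $e^3=0$, and $e\,f(x^{(k)})=0$ for $k\geq 1$.
\item Differentiating $e\,f(x^{(k)})=0$ gives $f(x^{(k-1)})=f([1,x^{(k)}])=2e\,\delta f(x^{(k)})$.
\item Descending from $f(x^{(p-1)})=e^2$, every $f(x^{(j)})$ lies in $e^2\mathcal C^\#$ (with $\mathcal C^\#=\mathcal C+\mathbb F1$, $\delta(1)=0$). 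In particular $e=e^2d$ for some $d\in\mathcal C^\#$.
\item Then $e=e^3d^2=0$, contradicting injectivity.
\end{enumerate}
So zero-divisor $q$ needs a genuinely non-vector-type construction. The paper does not build one explicitly. It observes that $(\mathcal A,\circ_q,*)$ with $x*y=xD(y)$ is a generalized Novikov--Poisson algebra, and invokes Theorem 3 of Zhelyabin--Zakharov, which is what is meant to cover arbitrary $q$.
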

\begin{proof}
    It suffices to notice that the algebra $(\mathcal A, \circ_q, *)$ where $x * y = xD(y)$ is a generalized Novikov-Poisson algebra in the sense of Zhelyabin-Zakharov \cite{zz15}. Then by \cite[Theorem 3]{zz15}, the Kantor double of $(\mathcal A, \circ_q, [\cdot, \cdot])$ is special.
\end{proof}

As an immediate consequence, any mutation of a unital transposed Poisson algebra has special Kantor double.
In particular, we obtain the following consequence for the family $\mathcal{W}_n(q)$.

\begin{corollary}
For every $q\in \mathcal W(1;n)$, the Jordan superalgebra $\mathfrak J(\mathcal{W}_n(q))$ is special.
\end{corollary}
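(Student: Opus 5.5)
The corollary will follow directly from Proposition~\ref{specialkantorvect}. The plan is to realize $\mathcal{W}_n(q)$ as an algebra of the form considered there, with $\mathcal A=(\mathcal W(1;n),\bullet)$, with $D=\delta=Q_{e_{-1}}$, and with the given $q$.

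First, I will use Lemma~\ref{isomm} and Remark~\ref{idntifw}. They show that $(\mathcal W(1;n),\bullet)$ is associative, commutative and unital with unit $e_{-1}$; under $\Phi$ it is identified with $\mathcal O(1;n)$. By definition, the associative product of $\mathcal W_n(q)$ is $x\bullet_q y=x\bullet q\bullet y$, which is exactly the product $\circ_q$ in Proposition~\ref{specialkantorvect}.

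Second, I need the Lie bracket of $\mathcal W(1;n)$ to be of the form $[x,y]=x\bullet D(y)-D(x)\bullet y$ for some derivation $D$ of $(\mathcal W(1;n),\bullet)$. Take $D=\partial$ transported through $\Phi$, that is, $D(f\partial)=\partial(f)\partial$. This is a derivation of $\mathcal O(1;n)$, hence of $\bullet$. For $f\partial, g\partial$ we have
$$[f\partial,g\partial]=(f\partial(g)-\partial(f)g)\partial=(f\partial)\bullet D(g\partial)-D(f\partial)\bullet(g\partial),$$
which is precisely the formula in \eqref{usual}. As a check on the basis, $D(e_i)=e_{i-1}=[e_{-1},e_i]$, so $D=\delta$. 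Alternatively, since $(\mathcal W(1;n),\bullet)=\mathcal W_n(e_{-1})$ is a unital transposed Poisson algebra, the remark after Theorem~\ref{decomp} gives $[x,y]=x[1,y]-[1,x]y$ with $1=e_{-1}$ and $Q_1$ a derivation; this yields the same conclusion with no computation.

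With both ingredients in place, $\mathcal W_n(q)$ is literally the algebra $(\mathcal A,\circ_q,[\cdot,\cdot])$ of Proposition~\ref{specialkantorvect}, so $\mathfrak J(\mathcal W_n(q))$ is special. Nothing here is a genuine obstacle. The only point needing care is that the bracket is independent of $q$ and equals the $D$-bracket built from the unmutated product $\bullet$, not from $\bullet_q$. This is exactly how Proposition~\ref{specialkantorvect} is stated, so no invertibility assumption on $q$ is needed, and the non-unital cases are covered as well.
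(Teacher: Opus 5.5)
Your proposal is correct and matches the paper's argument: the paper also obtains the corollary directly from Proposition~\ref{specialkantorvect}, viewing $\mathcal W_n(q)$ as the mutation by $q$ of the unital transposed Poisson algebra $\mathcal W_n(e_{-1})$, whose bracket is $[x,y]=x\bullet\delta(y)-\delta(x)\bullet y$ with $\delta=Q_{e_{-1}}$ a derivation of $\bullet$. Your check with $D(f\partial)=\partial(f)\partial$ only makes this identification explicit, and you rightly note that $q$ need not be invertible.
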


\subsection{The algebra $\mathcal{W}_n(q)$ as a quasi-Poisson algebra}

Quasi-Poisson algebras were introduced by Billig in \cite{billig} as a source of superconformal algebras. A quasi-Poisson algebra is a unital associative commutative algebra endowed with a Lie bracket and a linear operator $P$ satisfying suitable compatibility identities. The case $P=\mathrm{Id}$ is particularly relevant for us: then the quasi-Poisson identity is equivalent to the transposed Leibniz rule. Thus, transposed Poisson algebras may be viewed as quasi-Poisson algebras with $P=\mathrm{Id}$.

Billig proved that quasi-Poisson algebras give rise to Lie superalgebras of the form $L(A)$ using the construction in \cite[Theorem 6.8]{billig}, and that in the finite-dimensional case $L(A)$ is a superconformal algebra whenever the operator
$
Q=\operatorname{ad}(1)
$ ($Q_1$ in the notation of our paper)
is diagonalizable, see \cite[Theorem 6.10]{billig}. In particular, for the algebra $\mathcal{W}_n(q)$ with $q$ invertible, the associated distinguished operator is
$Q=\operatorname{ad}(q^{-1})$. 
Its behavior depends on the choice of $q$. For example, for the model algebra $\mathcal W_n(e_{-1})$, we have $1=e_{-1}$, and hence $Q=\operatorname{ad}(e_{-1})$ is nilpotent. On the other hand, diagonalizable cases also occur. For example, if $n=1$ and $q=(e_{-1}+e_0)^{-1}$, 
then $1=e_{-1}+e_0$, and for $-1\leq j\leq p-2$ we have
$$
Q(e_j)=[e_{-1}+e_0,e_j]=e_{j-1}+j e_j.
$$
Thus, in the basis $e_{-1},e_0,\dots,e_{p-2}$, the matrix of $Q$ is triangular with diagonal $(-1,0,1,\dots,p-2),$ 
which are pairwise distinct. Hence, $Q$ is diagonalizable.

Therefore, the family $\mathcal W_n(q)$ exhibits both nilpotent and diagonalizable distinguished operators $Q$. This makes it a natural modular source for the cited superalgebra construction and suggests that the algebras $\mathcal W_n(q)$ may be relevant in the search for modular analogues of superconformal Lie superalgebras. Moreover, it would be interesting to investigate whether the algebras $\mathcal W_n(q)$ admit extensions to quasi-Poisson superalgebras with nonzero odd part, and to determine the corresponding Lie superalgebras arising from Billig's construction. In this direction, the irreducible representations determined in the present paper provide a starting point.

\section*{Statements and Declarations}

\noindent{\bf Competing Interests}.
The author has no competing interests to declare.

\end{document}